\date{\today}
\newtheorem{theorem}{Theorem}[section]
\newtheorem{lemma}[theorem]{Lemma}
\newtheorem{proposition}[theorem]{Proposition}
\newtheorem{corollary}[theorem]{Corollary}
\theoremstyle{definition}
\newtheorem{definition}[theorem]{Definition}
\newtheorem{example}[theorem]{Example}
\newtheorem{apart}[theorem]{ }
\newcommand{\ot}{\otimes}
\newcommand{\co}{\circ}
\let\uml\"
\title[Weak Hopf quasigroups and matched pairs of quasigroupoids]{Weak Hopf quasigroups and matched pairs of quasigroupoids}
\title{Weak Hopf quasigroups and matched pairs of quasigroupoids} 
\begin{document}
	
\maketitle
	
\begin{center}
{\bf Ram\'on Gonz\'{a}lez Rodr\'{\i}guez$^{a,b}$}.
\end{center}
	
\begin{center}
{\small \vspace{0.1cm}  [https://orcid.org/0000-0003-3061-6685]}
\end{center}
\begin{center}	{\small $^{a}$ CITMAga, 15782 Santiago de Compostela, Spain}
\end{center}
\begin{center}
{\small $^{b}$ Universidade de Vigo, Departamento de Matem\'{a}tica Aplicada II,  E-36310 Vigo, Spain\\email: rgon@dma.uvigo.es}
\end{center}
\vspace{0.1cm}
	
\begin{abstract}  
In this paper we introduce the notion of exact factorization of a quasigroupoid and the notion of  matched pair of quasigroupoids with common base. We prove that if $({\sf A}, {\sf H})$ is a matched pair of quasigroupoids it is posible to construct a new quasigroupoid  ${\sf A}\bowtie {\sf H}$ called the double cross product  of ${\sf A}$ and ${\sf H}$. Also, we show that, if a quasigroupoid ${\sf B}$ admits an exact factorization, there exists  a matched pair of quasigroupoids   $({\sf A}, {\sf H})$ and an isomorphism of quasigroupoids  between ${\sf A}\bowtie {\sf H}$ and ${\sf B}$. Finally, if ${\mathbb K}$ is a field, we show that every matched pair of quasigroupoids  $({\sf A}, {\sf H})$ induce, thanks to the quasigroupoid magma construction, a pair $({\mathbb K}[{\sf A}], {\mathbb K}[{\sf H}])$ of weak Hopf quasigroups and a double crossed product weak Hopf quasigroup ${\mathbb K}[{\sf A}]\bowtie{\mathbb K}[{\sf H}]$ isomorphic to  ${\mathbb K}[{\sf A}\bowtie {\sf H}]$ as weak Hopf quasigroups.
\end{abstract} 
	
\vspace{0.2cm} 
	
{\footnotesize {\sc Keywords}: Quasigroup; Quasigroupoid; Groupoid; Hopf quasigroup; Weak Hopf quasigroup; Matched pair; Double cross product; Quasigroupoid magma.
}
	
{\footnotesize {\sc 2010 Mathematics Subject Classification}: 20N05, 16T05, 17A01. 
}
	
\section{Introduction} The notion of matched pair of groups was introduced by Takeuchi in \cite{TAK1} to formalize the situation that arises when we have two groups $(R, \bullet)$ and $(T, \star)$ acting on each other in a compatible way. The pair $(R,T)$ is a matched pair of groups if there exist two group actions $\varphi_{R}:T\times R\rightarrow R$ and $\phi_{T}:T\times R\rightarrow T$ such that 
$$\varphi_{R}(u, e_{R})=e_{R}, \;\; \varphi_{R}(u, r\bullet s)=\varphi_{R}(u,r)\bullet \varphi_{R}(\phi_{T}(u,r),s) $$
$$\phi_{T}(e_{T},r)=e_{T}, \;\; \phi_{T}(u\star v, r) =\phi_{T}(u,\varphi_{R}(v,r)) \star \phi_{T}(v,r).$$

Given a matched pair of groups $(R,T)$ it is possible to define a new group denoted by $R\bowtie T$ and called the double cross product group. This group is the set $R\times T$ with unit, product and inverse defined by $e_{R\bowtie T}=(e_{R}, e_{T})$, 
$$(r, u)\cdot (s,v)=(r\bullet \varphi_{R}(u,s), \phi_{T}(u,s)\star v)$$
and 
$$(r,u)^{-1}=(\varphi_{R}(u^{-1},s^{-1}), \phi_{T}(u^{-1},s^{-1})).$$
respectively. It is a relevant fact that this type of pairs solve the question of when a group $X$ admits an exact factorization $X=RT$, i.e., $R$ and $T$ are subgroups of $X$ and the map $R\times T\rightarrow X$, given by the product in $X$ is a bijection. In this setting $X=RT$ iff  $(R,T)$ is a matched pair of groups and $R\bowtie T$ is isomorphic to $X$ in the category of groups. 

Let ${\mathbb K}$ be a field, let ${\mathbb K}$-{\sf Vect} be the category of vector spaces over ${\mathbb K}$ and denote by $\otimes$ the tensor product in the category ${\mathbb K}$-{\sf Vect}. The double cross product of two Hopf algebras $A$ and $H$ in ${\mathbb K}$-{\sf Vect} was introduced by Majid in \cite[Proposition 3.12]{MAJ} (see also \cite[Theorem 7.2.2]{MAJDCP}) as a  Hopf algebra  defined in the tensor product $A\otimes H$ and determined by a matched pair of Hopf algebras $(A,H)$. A matched pair  of Hopf algebras is a pair $(A,H)$, where $A$ and $H$ are Hopf algebras, $A$ is a left $H$-module coalgebra with action $\varphi_{A}:H\otimes A\rightarrow A$, $H$ is a right $A$-module coalgebra with action $\phi_{H}:H\otimes A\rightarrow H$ and some suitable compatibility conditions hold for all $h,g\in H$ and $a,b\in A$. Using the Heyneman-Sweedler's convention these conditions can be written as follows:
$$\varphi_{A}(h\otimes 1_{A})=\varepsilon(h)1_{A}, \;\; \varphi_{A}( h\otimes ab)=\varphi_{A}(h_{(1)}\otimes a_{(1)})\varphi_{A}(\phi_{H}(h_{(2)} \otimes a_{(2)})\otimes b) $$
$$\phi_{H}(1_{H}\otimes a)=\varepsilon(a)1_{H}, \;\; \phi_{H}(hg\otimes a) =\phi_{H}(h \otimes  \varphi_{A}(g_{(1)} \otimes a_{(1)})) \phi_{H}(g_{(2)}\otimes a_{(2)}), $$
$$\phi_{H}(h_{(1)}\otimes a_{(1)})\otimes \varphi_{A}(h_{(2)}\otimes a_{(2)})=\phi_{H}(h_{(2)}\otimes a_{(2)})\otimes \varphi_{A}(h_{(1)}\otimes a_{(1)}).$$

If $(A,H)$ is a matched pair  of Hopf algebras, the double cross product $A\bowtie H$ of $A$ with $H$ is the Hopf algebra built on the vector space $A\otimes H$ with product 
$$(a\otimes h)(b\otimes g)=a\varphi_{A}(h_{(1)} \otimes b_{(1)})\otimes \phi_{H}(h_{(2)} \otimes b_{(2)})g$$
and tensor product unit, counit, coproduct and antipode 
$$\lambda_{A\bowtie H}(a\otimes h)=\lambda_{H}(h_{(2)})\triangleright \lambda_{A}(a_{(2)})\otimes \lambda_{H}(h_{(1)})\triangleleft \lambda_{A}(a_{(1)})$$
where $\lambda_{A}$ is the antipode of $A$ and $\lambda_{H}$ the antipode of $H$. Following \cite{MAJDCP} we can assure that this type of pairs solve the question of when a Hopf algebra admits an exact factorization as the product of two Hopf subalgebras. More concretely, a Hopf algebra $X$ factorises as $X=AH$ if there exists sub-Hopf algebras $A$ and $H$ with inclusion maps $i_{A}$ and $i_{H}$ such that the map $\omega (a\otimes h)=i_{A}(a)i_{H}(h)$ is an isomorphism of vector spaces. Then, as was proved by Majid in \cite[Theorem 7.2.3]{MAJDCP}, $X$ factorises as $X=AH$ iff there exists a matched pair of Hopf algebras $(A,H)$ such that $X$ is isomorphic to $A\bowtie H$ as Hopf algebras.

For any group $R$, its group algebra ${\mathbb K}[R]$ is an example of cocommutative Hopf algebra in the category ${\mathbb K}$-{\sf Vect} and, if $(R,T)$ is a matched pair of groups, the pair $({\mathbb K}[R], {\mathbb K}[T])$ is a matched pair of Hopf algebras. Moreover, there exists an isomorphism of Hopf algebras between ${\mathbb K}[R\bowtie T]$ and ${\mathbb K}[R]\bowtie {\mathbb K}[T]$. 

The theory of matched pairs of groups was extended by Mackenzie to groupoids in \cite{Mac}. In this context, it has been shown that there exist an equivalence between matched pairs of groupoids, exact factorizations and vacant double groupoids (see \cite{Mac}, \cite{AN}). Usign the groupoid algebra, the notion of  matched pair  can be extended to the theory of weak Hopf algebras and it is possible to obtain results that link exact factorizations of weak Hopf algebras with matched pairs and with double cross products of weak Hopf algebras (see \cite{GaPe}). For example,  if $({\sf A}, {\sf H})$ is a matched pair of finite groupoids with common base and, as in the previous cases $\bowtie$ denotes the double cross product, there exists an isomorphism of  weak Hopf algebras between ${\mathbb K}[{\sf A}\bowtie {\sf H}]$ and ${\mathbb K}[{\sf A}]\bowtie {\mathbb K}[{\sf H}]$ where ${\mathbb K}[{\sf A}]$, $ {\mathbb K}[{\sf H}]$ and ${\mathbb K}[{\sf A}]\bowtie {\mathbb K}[{\sf H}]$ are the corresponding groupoid algebras.

All the constructions mentioned in the previous paragraphs "live" in an associative world. Ask ourselves what happens when we replace  Hopf algebras, groupoids and weak Hopf algebras with their non-associative versions (Hopf quasigroups, quasigroupoids and weak Hopf quasigroups, respectively) is the main motivation of this paper. 

The notion of Hopf quasigroup in ${\mathbb K}$-{\sf Vect} was introduced  by Klim and Majid in \cite{KM} in order to understand the structure and relevant properties of the algebraic $7$-sphere, which is the loop of non-zero octonions. Hopf quasigroups are  particular cases  of unital coassociative $H$-bialgebras (see \cite{PI07}) and also of quantum quasigroups (see \cite{SM1} and \cite{SM2}). Hopf quasigroups include as an examples the quasigroup magma for a quasigroup in the sense of Klim and Majid, i.e., a loop  with the inverse property (see \cite{KM}, \cite{Bruck}), and also  the enveloping algebra $U(M)$ of a Malcev algebra living in a category of modules over a ring satisfying suitable conditions  (see \cite{KM} and \cite{PIS}). For these non-associative Hopf objects the theory of double cross products was developed in \cite{our2} and the corresponding one for quasigroups can be found in \cite{KM2}. It is important to highlight that in \cite{our2} the authors proved results similar to those existing for Hopf algebras relating two cocycles, skew pairings and double cross products. 

On the other hand,  quasigroupoids are the non-associative version of groupoids. This notion is equivalent to the one introduced by J. Grabowski in \cite{GRABO} with the name of inverse loopoid. As was pointed in \cite{GRABO22}, quasigroupoids are relevant in order to extend the Lie functor to categories of non-associative objects. From a purely algebraic point of view the category of finite quasigroupoids is equivalent to the one of pointed cosemisimple weak Hopf quasigroups over a given field ${\mathbb K}$. As a consequence, if ${\mathbb K}$ is algebraically closed, we obtain that  the categories of finite quasigroupoids and cocommutative cosemisimple weak Hopf quasigroups are equivalent (see \cite{JA21}). Weak Hopf quasigroups was introduced in \cite{Asian} by Alonso Álvarez, Fernández Vilaboa  and González Rodríguez as a new generalization of Hopf algebras  which encompass weak Hopf algebras and Hopf quasigroups.  A family of non-trivial examples of weak Hopf quasigroups can be obtained by working with  bigroupoids, i.e. bicategories where every $1$-cell is an equivalence and every $2$-cell is an isomorphism. We can also obtain interesting examples of this type of non-associative algebraic structures thanks to quasigroupoid magmas (see Example \ref{ex-k}). 

This paper introduces a theory of matched pairs and exact factorizations of quasigroupoids with common base such that, if $({\sf A}, {\sf H})$ is one of this matched pairs, the pair of its corresponding quasigroupoid magmas $({\mathbb K}[{\sf A}], {\mathbb K}[{\sf H}])$ has an associated isomorphism of weak Hopf quasigroups  between ${\mathbb K}[{\sf A}\bowtie {\sf H}]$ and a double crossed product ${\mathbb K}[{\sf A}]\bowtie{\mathbb K}[{\sf H}]$.  To obtain these results,  following the theory developed in the associative setting for groupoids, in the second section we define the notion of matched pair of quasigroupoids with common base and we prove that, if $({\sf A}, {\sf H})$ is a matched pair of quasigroupoids, it is posible to construct a new quasigroupoid denoted by ${\sf A}\bowtie {\sf H}$ and called the double crossed product or the diagonal quasigroupoid of ${\sf A}$ and ${\sf H}$. Also, in this section  the notion of exact factorization of a quasigroupoid ${\sf B}$ is introduced and it is proven that  ${\sf B}$ admits an exact factorization iff there exists  a matched pair of quasigroupoids   $({\sf A}, {\sf H})$ and an isomorphism of quasigroupoids  between ${\sf A}\bowtie {\sf H}$ and ${\sf B}$ induced by the product in ${\sf B}$.  In the third section it is proven that that every matched pair of quasigroupoids  $({\sf A}, {\sf H})$ induce a pair $({\mathbb K}[{\sf A}], {\mathbb K}[{\sf H}])$ of weak Hopf quasigroups and a  weak Hopf quasigroup ${\mathbb K}[{\sf A}]\bowtie{\mathbb K}[{\sf H}]$, obtained as a double cross product, such that ${\mathbb K}[{\sf A}\bowtie {\sf H}]$ and ${\mathbb K}[{\sf A}]\bowtie{\mathbb K}[{\sf H}]$ are isomorphic weak Hopf quasigroups. At this point it is necessary to point out that there is not yet a solid theory of double cross products for weak Hopf quasigroups. In any case,  if it exists, it should contain as a particular case the construction of ${\mathbb K}[{\sf A}]\bowtie{\mathbb K}[{\sf H}]$ introduced in this paper. As a particular case of this general theory we can prove  that, if $(A, H)$ is a matched pair of quasigroups, ${\mathbb K}[{A}\bowtie {H}]$ and ${\mathbb K}[{A}]\bowtie {\mathbb K}[{ H}]$ are isomorphic as Hopf quasigroups.

\section{Matched pairs of quasigroupoids}

A quasigroupoid is the weak version of the notion of quasigroup (or IP loop)  given by J. Klim and S. Majid  in \cite{KM}. The relationship between quasigroupoids and quasigroups is similar to that between groupoids and groups. In fact, we can also think of a quasigroupoid as the non-associative version of the notion of groupoid.  We will start this section by recalling the notions of quasigroup introduced in \cite{KM} and the one of quasigroupoid defined in \cite{JA21}.

\begin{definition}
\label{IP-loop}
{\rm 
A quasigroup $A$ is a triple $A=(A, \cdot, e_{A})$, where  $A$ is a set equipped with a  product $\cdot$,  an  element $e_A$, called identity element,  satisfying 
$$
 e_A\cdot u=u=u\cdot e_A
 $$
and with the property that for each $u\in A$ there exists $u^{-1}\in A$ such that 
$$
u^{-1}\cdot(u\cdot v)=v=(v\cdot u)\cdot u^{-1},
$$
holds for all $v\in A$. 

It is easy to prove that, in any quasigroup  $L$, the inverse of an element $u$ is unique and
$$
(u^{-1})^{-1}=u,\;\;\;(u\cdot v)^{-1}=v^{-1}\cdot u^{-1},  $$
hold for all $u,v\in A$. Obviously, a group is a quasigroup where the product is associative.
}
\end{definition} 

\begin{definition}
\label{XY}
{\rm
Let $X$, $Y$ and $P$ be  sets and let $i:X\rightarrow P$ and $j:Y\rightarrow P$ be maps. We define the set $X\;_{i}\hspace{-0.1cm}\times_{j}Y$  by 
$$X\;_{i}\hspace{-0.1cm}\times_{j}Y=\{(x,y)\in X\times Y \; / \;i(x)=j(y)\}.$$

Similarly, if $Z$ is a set and  $k:Z\rightarrow P$, $l:Y\rightarrow P$  maps, we define $X\;_{i}\hspace{-0.1cm}\times_{j}Y\;_{l}\hspace{-0.1cm}\times_{k}Z$ by 
$$X\;_{i}\hspace{-0.1cm}\times_{j}Y\;_{l}\hspace{-0.1cm}\times_{k}Z=\{(x,y,z)\in X\times Y\times  Z\; / \;i(x)=j(y), \; l(y)=k(z) \}.$$

}
\end{definition}

\begin{definition}
\label{quasigroupoid}
{\rm 
A quasigroupoid ${\sf A}$ is an ordered pair of sets ${\sf A}=({\sf A}_0,{\sf A}_1)$ such that:

\begin{itemize}
\item[(a1)] There exist maps $s_{\sf A}:{\sf A}_1\rightarrow {\sf A}_0,$ $t_{\sf A}:{\sf A}_1\rightarrow {\sf A}_0,$ and $id_{\sf A}:{\sf A}_0\rightarrow {\sf A}_1,$ called source, target  and identity, respectively, satisfying 
$$s_{\sf A}(id_{\sf A}(x))=t_{\sf A}(id_{\sf A}(x))=x,\;\;\forall \; x\in {\sf A}_0.$$  
\item[(a2)] There exist a map, called product of ${\sf A}$,  
$$\bullet :{\sf A}_1\;_{s_{\sf A}}\hspace{-0.15cm}\times_{t_{\sf A}} {\sf A}_1\rightarrow {\sf A}_1,$$ defined by $\bullet(a,b)=a\bullet b$ and a map
$\lambda_{\sf A}:{\sf A}_1\rightarrow {\sf A}_1$, called the inverse map,  such that:

\begin{itemize}
\item[(${\rm a2-1}$)] For each $a\in {\sf A}_1$,  $$id_{{\sf A}}(t_{{\sf A}}(a))\bullet a=a=a\bullet  id_{{\sf A}}(s_{{\sf A}}(a)).$$
\item[(${\rm a2-2}$)] For all $(a, b)\in {\sf A}_1\;_{s_{\sf A}}\hspace{-0.15cm}\times_{t_{\sf A}} {\sf A}_1$, $$s_{{\sf A}}(a\bullet b)=s_{{\sf A}}(b), \;\;t_{{\sf A}}(a\bullet b)=t_{{\sf A}}(a).$$ 
\item[(${\rm a2-3}$)] For all $(a, b)\in {\sf A}_1\;_{s_{\sf A}}\hspace{-0.15cm}\times_{t_{\sf A}} {\sf A}_1$, $(\lambda_{\sf A}(a), a\bullet b)$ and $(a\bullet b, \lambda_{\sf A}(b))$ are in ${\sf A}_1\;_{s_{\sf A}}\hspace{-0.15cm}\times_{t_{\sf A}} {\sf A}_1$ and 
$$\lambda_{\sf A}(a)\bullet (a\bullet b)=b,\;\;\; (a\bullet b)\bullet \lambda_{{\sf A}}(b)=a.$$ 
\end{itemize}
\end{itemize}

The set ${\sf A}_{0}$ wil be called the base of ${\sf A}$. We will say that a quasigroupoid ${\sf A}$ is finite if its base is a finite set. Note that a finite quasigroupoid where $\vert {\sf A}_{0}\vert =1$ is a quasigroup.
}
\end{definition}

As was pointed in \cite{JA21} a quasigroupoid in the sense of Definition \ref{quasigroupoid} is an inverse semiloopoid satisfying  the unities associativity assumption, i.e., an inverse loopoid in the sense of \cite{GRABO} (see \cite[Definition 5.2]{GRABO}. As a consequence, we have that the following equalities
\begin{align}
\label{E-1}
s_{\sf A}(\lambda_{\sf A}(a))&=t_{\sf A}(a),\\
\label{E-2}
t_{\sf A}(\lambda_{\sf A}(a))&=s_{\sf A}(a),\\ 
\label{E-3}
\lambda_{\sf A}(a)\bullet a &=id_{\sf A}(s_{\sf A}(a)),\\
\label{E-4}
a\bullet \lambda_{\sf A}(a)&=id_{\sf A}(t_{\sf A}(a)),\\
\label{E-5}
\lambda_{\sf A}(\lambda_{\sf A}(a))&=a,\\
\label{E-6}
\lambda_{\sf A}(a\bullet b)&=\lambda_{\sf A}(b)\bullet \lambda_{\sf A}(a),
\end{align}
hold for all $a\in {\sf A}_1$ and $(a, b)\in {\sf A}_1\;_{s_{\sf A}}\hspace{-0.15cm}\times_{t_{\sf A}} {\sf A}_1$.

\begin{definition}
\label{mor-loop}
{\rm 
Let ${\sf A}$, ${\sf A}^{\prime}$ be quasigroupoids. A morphism $\Gamma:{\sf A}\rightarrow {\sf A}^{\prime}$ between ${\sf A}$ and ${\sf A}^{\prime}$ is a pair of maps $\Gamma=(\Gamma_{0},\Gamma_{1}),$  $\Gamma_{0}:{\sf A}_{0}\rightarrow {\sf A}^{\prime}_{0},$ $\Gamma_{1}:{\sf A}_{1}\rightarrow {\sf A}^{\prime}_{1}$,  such that 
\begin{itemize}
\item[(b1)] $\Gamma_{0}\circ s_{\sf A} =s_{{\sf A}^{\prime}}\circ \Gamma_{1},$
\item[(b2)] $\Gamma_{0}\circ t_{\sf A}  = t_{{\sf A}^{\prime}}\circ  \Gamma_{1},$ 
\item[(b3)] $\Gamma_{1}(id_{\sf A}(x)) = id_{{\sf A}^{\prime}}(\Gamma_{0}(x)),$
\end{itemize}
hold for all $x\in {\sf A}_{0}$, and 
\begin{itemize}
\item[(b4)]  $\Gamma_{1}(a\bullet b)=\Gamma_{1}(a)\bullet^{\prime}\Gamma_{1}(b),$
\end{itemize}
holds for all $(a,b)\in {\sf A}_1\;_{s_{\sf A}}\hspace{-0.15cm}\times_{t_{\sf A}} {\sf A}_1$.

The obvious composition of quasigroupoid morphisms is a quasigroupoid morphism. Then with {\sf QGPD} we will denote the category whose objects are  quasigroupoids and whose morphisms are  morphisms of  quasigroupoids.  Note that $\Gamma$ is a monomorphism in ${\sf QGPD}$ if the maps $\Gamma_{0}$ and $\Gamma_{1}$ are injective, $\Gamma$ is an epimorphism in ${\sf QGPD}$ if the maps $\Gamma_{0}$ and $\Gamma_{1}$ are sobrejective and $\Gamma$ is an isomorphism in ${\sf QGPD}$ if the maps $\Gamma_{0}$ and $\Gamma_{1}$ are bijective.
}
\end{definition}

\begin{example}
\label{exquasi}
{\rm  The following example of quasigrouipoid was introduced in \cite[Example 2.6]{JA21}.  Let $A$ be a quasigroup with product $\cdot$ and let $X$ be a set. Assume that there exists a map  $\psi_{X}:A\times X\rightarrow X$ satisfying the following two conditions:
$$
\psi_{X}(e_{A},x)=x,\;\;\;\;\;\;
\psi_{X}(a\cdot b,x)=\psi_{X}(a, \psi_{X}(b, x)), 
$$
for all $x\in X$ and $a,b\in A.$
	
In this case we will say that $\psi_{X}$ is an action of $A$ over $X$. The  quasigroupoid ${\sf B}=({\sf B}_0,{\sf B}_1)$ associated to the action $\psi_{X}$ is defined by the sets 
${\sf B}_0=X,$ ${\sf B}_1=A\times X$ and maps
$$s_{{\sf B}}:{\sf B}_1\rightarrow {\sf B}_0, \;\;\;  s_{{\sf B}}(a,x)=x,$$
$$t_{{\sf B}}:{\sf B}_1\rightarrow {\sf B}_0,\;\;\; t_{{\sf B}}(a,x)=\psi_{X}(a, x), $$
$$id_{{\sf B}}:{\sf B}_0\rightarrow {\sf B}_1, \;\;\;  id_{{\sf B}}(x)=(e_{A},x).$$
	
Then, ${\sf B}_1\; _{s_{\sf B}}\hspace{-0.15cm}\times_{t_{\sf B}} {\sf B}_1=\{((a,x), (b,y),)\in {\sf B}_1\times {\sf B}_1 \;/ \; \psi_{X}(b, y)=x\}$ and the product  is defined by  $(a,x)\star (b,y)= ( a\cdot b,y).$   The inverse map $\lambda_{\sf B}:\sf{B}_1\rightarrow \sf{B}_1$ is 
$ \lambda_{\sf{B}}(a,x)=(a^{-1}, \psi_{X}(a, x)).$ 

As was proved in \cite[Example 2.6]{JA21} examples of this kind can be obtained by working with Moufang loops of small order (Moufang loops fit in Definition \ref{IP-loop}) as the ones introduced by O. Chein in \cite{CHEIN} and the 4-dimensional Taft Hopf algebra.

}
\end{example}

\begin{example}
\label{GB1}
{\rm The following example is the algebraic version of \cite[Example 3.8]{GRABO22}. Let $A$ be a quasigroup and let $X$ be a set. Denote by $T$ the set 
$$T=\{(a,x,y)\;/\; a\in A, \;x,y\in X\}.$$

The  quasigroupoid ${\sf T}=({\sf T}_0,{\sf T}_1)$ associated to $T$ is defined by the sets 
${\sf T}_0=\{(e_{A},x,x)\in T\},$ ${\sf T}_1=T$ and maps
$$s_{{\sf T}}:{\sf T}_1\rightarrow {\sf T}_0, \;\;\;  s_{{\sf T}}(a,x,y)=(e_{A},y,y),$$
$$t_{{\sf T}}:{\sf T}_1\rightarrow {\sf T}_0,\;\;\; t_{{\sf T}}(a,x,y)=(e_{A},x,x), $$
$$id_{{\sf T}}:{\sf T}_0\rightarrow {\sf T}_1, \;\;\;  id_{{\sf T}}(e_{A},x,x)=(e_{A},x,x).$$

Then, $${\sf T}_1\; _{s_{\sf T}}\hspace{-0.15cm}\times_{t_{\sf T}} {\sf T}_1=\{((a,x,y), (b,y,r))\in {\sf T}_1\times {\sf T}_1 \}$$ 
and the product  is defined by  $$(a,x,y)\star (b,y,r)= ( a\cdot b,x,r).$$  

The inverse map $\lambda_{\sf T}:\sf{T}_1\rightarrow \sf{T}_1$ is 
$$ \lambda_{\sf{B}}(a,x,y)=(a^{-1}, y,x).$$
}
\end{example}

\begin{example}
\label{GB2}
{\rm The last example is the algebraic version of  \cite[Example 3.9]{GRABO22}. In this case, let ${\sf A}$ be a quasigroupoid, let $P$ be a set and let $\pi:P\rightarrow {\sf A}_{0}$ be a surjective map. The  quasigroupoid ${\sf P(A)^{\pi}=(P(A)^{\pi}_0,P(A)^{\pi}_1)}$  is defined by the sets 
${\sf P(A)}^{\pi}_0=P,$ 
$${\sf P(A)}^{\pi}_1= \{(p,a,q)\;/\; (p,a,q)\in P\times {\sf A}_{1}\times P, \; \pi(p)=t_{\sf A}(a), \; \pi(q)=s_{\sf A}(a)\}$$
and maps
$$s_{{\sf P(A)}^{\pi}}:{\sf P(A)}^{\pi}_1\rightarrow {\sf P(A)}^{\pi}_0, \;\;\;  s_{{\sf P(A)}^{\pi}}(p,a,q)=q,$$
$$t_{{\sf P(A)}^{\pi}}:{\sf P(A)}^{\pi}_1\rightarrow {\sf P(A)}^{\pi}_0,\;\;\; t_{{\sf P(A)}^{\pi}}(p,a,q)=p, $$
$$id_{{\sf P(A)}^{\pi}}:{\sf P(A)}^{\pi}_0\rightarrow {\sf P(A)}^{\pi}_1, \;\;\;  id_{{\sf P(A)}^{\pi}}(p,a,q)=(p,id_{\pi(p)},p).$$
	
Then, $${\sf P(A)}^{\pi}_1\; _{s_{{\sf P(A)}^{\pi}}}\hspace{-0.15cm}\times_{t_{{\sf P(A)}^{\pi}}} {{\sf P(A)}^{\pi}}_1=\{((p,a,q), (q,b,n))\in {{\sf P(A)}^{\pi}}_1\times {{\sf P(A)}^{\pi}}_1 \}$$ 
and the product  is defined by  $$(p,a,q)\star (q,b,n)= ( p, a\cdot b, n).$$  
	
The inverse map $\lambda_{{\sf P(A)}^{\pi}}:{\sf P(A)}^{\pi}_1\rightarrow {\sf P(A)}^{\pi}_1$ is 
$$ \lambda_{{\sf P(A)}^{\pi}}(p,a,q)=(q, \lambda_{\sf A}(a), p).$$
}
\end{example}

\begin{apart}
\label{groupoid}
{\rm 
As was pointed in the beginning of this section, a quasigroupoid is the non-associative version of the notion of groupoid.   The notion of groupoid was introduced, using sets and unary and binary operations, by H. Brandt  in a 1926 paper on the composition of quadratic forms in four variables \cite{BRANDT} (see also \cite{HAHN}).  Recall that a groupoid  ${\sf G}$ is an ordered pair of sets ${\sf G}=(\sf{G}_0,\sf{G}_1)$ such that:
\begin{itemize}
\item[(a$^{\prime}$1)] There exist maps $s_{\sf G}:\sf{G}_1\rightarrow \sf{G}_0,$ $t_{\sf G}:\sf{G}_1\rightarrow \sf{G}_0,$ and $id_{\sf G}:\sf{G}_0\rightarrow \sf{G}_1,$ called source, target and identity, respectively, satisfying 
$$s_{\sf G}(id_{\sf G}(x))=t_{\sf G}(id_{\sf G}(x))=x,\;\;\; \forall\;  x\in \sf{G}_0.$$  
\item[($a^{\prime}$2)] There exist a map, called product of $\sf{G}$,  
$$\bullet:\sf{G}_1\;_{s_{\sf G}}\hspace{-0.15cm}\times_{t_{\sf G}}  \sf{G}_1\rightarrow \sf{G}_1,$$ defined by $\bullet(\tau,\sigma)=\tau\bullet \sigma,$ and a map
$\lambda_{\sf G}:\sf{G}_1\rightarrow \sf{G}_1$, called the inverse map, such that: 
			
\begin{itemize}
\item[(${\rm a}^{\prime}2-1$)] For each $\sigma\in \sf{G}_1$, $$id_{\sf G}(t_{\sf G}(\sigma))\bullet \sigma=\sigma=\sigma\bullet  id_{\sf G}(s_{\sf G}(\sigma)).$$
\item[(${\rm a}^{\prime}2-2$)] For all $(\tau, \sigma)\in \sf{G}_1\;_{s_{\sf G}}\hspace{-0.15cm}\times_{t_{\sf G}}  \sf{G}_1$, $$s_{\sf G}(\tau\bullet \sigma)=s_{\sf G}(\sigma), \;\;t_{\sf G}(\tau\bullet \sigma)=t_{\sf G}(\tau).$$ 
\item[(${\rm a}^{\prime}2-3$)] If either $\omega\bullet (\tau\bullet \sigma)$ or $(\omega\bullet \tau)\bullet \sigma$  is defined so is the other and $\omega\bullet (\tau\bullet \sigma)=(\omega\bullet \tau)\bullet \sigma$.      
\item[(${\rm a}^{\prime}2-4$)] For each $\sigma\in \sf{G}_1$, $(\lambda_{\sf G}(\sigma),\sigma)$ and  $(\sigma, \lambda_{\sf G}(\sigma))$ are in $\sf{G}_1\;_{s_{\sf G}}\hspace{-0.15cm}\times_{t_{\sf G}}  \sf{G}_1$ and 
$$\lambda_{\sf G}(\sigma)\bullet \sigma=id_{\sf G}(s_{\sf G}(\sigma)), \;\;\;\; \sigma\bullet \lambda_{\sf G}(\sigma)=id_{\sf G}(t_{\sf G}(\sigma)).$$
\end{itemize}
\end{itemize}

From another point of view  a groupoid $\sf{G}$ is a small category in which every morphism is an isomorphism.  A group is a particular example of a groupoid, with only one object, where  the elements of the group correspond to morphisms of the groupoid. Also, a groupoid morphism is a functor between groupoids or, in terms of sets and operations, a pair of maps satisfying (b1)-(b4) of Definition \ref{mor-loop}. Then we have a category of groupoids, denoted by ${\sf GPD}$, and it is obvious that it is a subcategory of ${\sf QGPD}$.

A typical example of groupoid is the coarse groupoid ${\sf X}^{c}$ associated to a set $X$. This groupoid is defined as follows: 
$${\sf X}^{c}_{0}= X, \;\;  {\sf X}^{c}_{1}= X\times X, \;\; s_{{\sf X}^{c}}(x,y)=y, \;\; t_{{\sf X}^{c}}(x,y)=x, \;\; id_{{\sf X}^{c}}(x)=(x,x)$$
$$ (z,x)\bullet (x,y)=(z,y), \;\; \lambda_{{\sf X}^{c}}(x,y)=(y,x).$$

The discrete groupoid ${\sf X}^{d}$ associated to $X$ is other  example of groupoid where ${\sf X}^{d}_{0}= X, \;  {\sf X}^{d}_{1}= X, \; s_{{\sf X}^{d}}(x)=t_{{\sf X}^{d}}(x)=id_{{\sf X}^{d}}(x)=\lambda_{{\sf X}^{d}}(x)=x$ and $x\bullet x=x$.

}
\end{apart}

\begin{definition}
\label{sub}
{\rm Let ${\sf A}$ and ${\sf B}$ be quasigroupoids. We will say that ${\sf A}$ is a subquasigroupoid of ${\sf B}$ if there exists a monomorphism  $i^{{\sf A}}:{\sf A}\rightarrow {\sf B}$ in  {\sf QGPD}.
}
\end{definition}

\begin{definition}
\label{action}
{\rm Let ${\sf A}$ and ${\sf H}$ be quasigroupoids with products $\bullet$ and $\star$  and with the same base.  A left action of ${\sf H}$ on ${\sf A}$ is a map $\varphi_{A} :{\sf H}_1\;_{s_{\sf H}}\hspace{-0.15cm}\times_{t_{\sf A}} {\sf A}_1\rightarrow {\sf A}_1,$  satisfying:
\begin{itemize}
\item[(c1)] For all $(h,a)\in {\sf H}_1\;_{s_{\sf H}}\hspace{-0.15cm}\times_{t_{\sf A}} {\sf A}_1$, $$t_{\sf A}(\varphi_{\sf A}(h,a)) =t_{{\sf H}}(h).$$
\item[(c2)] For all $(h,a)\in {\sf H}_1\;_{s_{\sf H}}\hspace{-0.15cm}\times_{t_{\sf A}} {\sf A}_1$ and  $(g, h)\in {\sf H}_1\;_{s_{\sf H}}\hspace{-0.15cm}\times_{t_{\sf H}} {\sf H}_1$,   $$\varphi_{\sf A}(g\star h, a)=\varphi_{\sf A}(g, \varphi_{\sf A}(h, a)).$$
\item[(c3)]  For all $a\in {\sf A}_1$, 
$$\varphi_{\sf A}(id_{\sf H}(t_{\sf A}(a)), a)=a.$$
\end{itemize}

Similarly, a right action  of ${\sf A}$ on ${\sf H}$ is a map $\phi_{\sf H} :{\sf H}_1\;_{s_{\sf H}}\hspace{-0.15cm}\times_{t_{\sf A}} {\sf A}_1\rightarrow {\sf H}_1,$  satisfying:
\begin{itemize}
	\item[(d1)] For all $(h,a)\in {\sf H}_1\;_{s_{\sf H}}\hspace{-0.15cm}\times_{t_{\sf A}} {\sf A}_1$, $$s_{\sf H}(\phi_{\sf H}(h,a)) =s_{{\sf A}}(a).$$
	\item[(d2)] For all $(h,a)\in {\sf H}_1\;_{s_{\sf H}}\hspace{-0.15cm}\times_{t_{\sf A}} {\sf A}_1$ and  $(a, b)\in {\sf A}_1\;_{s_{\sf A}}\hspace{-0.15cm}\times_{t_{\sf A}} {\sf A}_1$,   $$\phi_{\sf H}( h, a\bullet b)=\phi_{\sf H}(\phi_{\sf H}(h, a), b),$$
	\item[(d3)]  For all $h\in {\sf H}_1$, 
	$$\phi_{\sf H}(h, id_{\sf A}(s_{\sf H}(h)))=h.$$
\end{itemize}
}
\end{definition}

\begin{definition}
\label{mp}
{\rm A matched pair of quasigroupoids is a pair of quasigroupoids $({\sf A}, {\sf H})$ with the same base together with a left action of ${\sf H}$ on ${\sf A}$, $\varphi_{\sf A} :{\sf H}_1\;_{s_{\sf H}}\hspace{-0.15cm}\times_{t_{\sf A}} {\sf A}_1\rightarrow {\sf A}_1,$ and a  right action  of ${\sf A}$ on ${\sf H}$, $\phi_{\sf H} :{\sf H}_1\;_{s_{\sf H}}\hspace{-0.15cm}\times_{t_{\sf A}} {\sf A}_1\rightarrow {\sf H}_1$,  satisfying  the following properties: 
\begin{itemize}
	\item[(e1)] For all $(h,a)\in {\sf H}_1\;_{s_{\sf H}}\hspace{-0.15cm}\times_{t_{\sf A}} {\sf A}_1$, $$s_{\sf A}(\varphi_{\sf A}(h,a)) =t_{{\sf H}}(\phi_{\sf H}(h,a)).$$
	\item[(e2)] For all $(h,a)\in {\sf H}_1\;_{s_{\sf H}}\hspace{-0.15cm}\times_{t_{\sf A}} {\sf A}_1$ and  $(a, b)\in {\sf A}_1\;_{s_{\sf A}}\hspace{-0.15cm}\times_{t_{\sf A}} {\sf A}_1$,   $$\varphi_{\sf A}( h, a\bullet b)=\varphi_{\sf A}(h, a)\bullet \varphi_{\sf A}(\phi_{\sf H}(h,a), b) .$$
	\item[(e3)]  For all $(h,a)\in {\sf H}_1\;_{s_{\sf H}}\hspace{-0.15cm}\times_{t_{\sf A}} {\sf A}_1$ and  $(g, h)\in {\sf H}_1\;_{s_{\sf H}}\hspace{-0.15cm}\times_{t_{\sf H}} {\sf H}_1$,  $$\phi_{\sf H}(g\star h, a)=\phi_{\sf H}(g, \varphi_{\sf A}(h, a))\star \phi_{\sf H}(h, a).$$
\end{itemize}

The notion of  matched pair of quasigroupoids is similar to the notion of matched pair of groupoids (see \cite[Definition 1.1]{Mar}).  It is interesting to emphasize that although we are working in a non-associative context, all the fundamental properties of this type of pairs collected in 
\cite[Lemma  1.2]{Mar} can be proved.
}
\end{definition}

\begin{proposition}
\label{properties}
Let  $({\sf A}, {\sf H})$ be a matched pair of quasigroupoids. For all $a, b\in {\sf A}_1$, $g, h\in {\sf H}_1$ for which the operations are defined, the following identities hold: 
\begin{align}
\label{P-1}
\varphi_{\sf A}(h,id_{\sf A}(s_{\sf H}(h) ))& =id_{\sf A}(t_{\sf H}(h)),\\
\label{P-2}
\phi_{\sf H}(id_{\sf H}(t_{\sf A}(a)), a)& =id_{\sf H}(s_{\sf A}(a)),\\ 
\label{P-3}
\lambda_{\sf A}(\varphi_{\sf A}(h,a))& = \varphi_{\sf A}(\phi_{\sf H}(h,a), \lambda_{\sf A}(a)),\\
\label{P-4}
\lambda_{\sf H}(\phi_{\sf H}(h,a))& = \phi_{\sf H}( \lambda_{\sf H}(h), \varphi_{\sf A}(h,a))\\
\label{P-5}
(b\bullet \varphi_{\sf A}(h,a))\bullet \varphi_{\sf A}(\phi_{\sf H}(h,a), \lambda_{\sf A}(a))&= b,\\
\label{P-6}
\phi_{\sf H}( \lambda_{\sf H}(h), \varphi_{\sf A}(h,a)) \star (\phi_{\sf H}(h,a)\star g) &= g,\\
\label{P-7}
\varphi_{\sf A}( \lambda_{\sf H}(\phi_{\sf H}(h,a)), \lambda_{\sf A}( \varphi_{\sf A}(h,a))) & = \lambda_{\sf A}(a),\\
\label{P-8}
\phi_{\sf H}( \lambda_{\sf H}(\phi_{\sf H}(h,a)), \lambda_{\sf A}( \varphi_{\sf A}(h,a))) & = \lambda_{\sf H}(h),\\
\label{P-9}
\lambda_{\sf A}(a)\bullet  \varphi_{\sf A}(\lambda_{\sf H}(h),b) &=  \varphi_{\sf A}(\lambda_{\sf H} (\phi_{\sf H}(h,a)),  \lambda_{\sf A}( \varphi_{\sf A}(h,a))\bullet b),\\
\label{P-10}
\phi_{\sf H}(g,\lambda_{\sf A}(a))\star \lambda_{\sf H}(h)  &=\phi_{\sf H}(g\star \lambda_{\sf H} (\phi_{\sf H}(h,a)), \lambda_{\sf A}(\varphi_{\sf A}(h,a))).
\end{align}
\end{proposition}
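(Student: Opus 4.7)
The plan is to prove the ten identities in roughly the order stated, since (P-3)--(P-10) all build on (P-1)--(P-2). Throughout I will keep track of the composability conditions (source/target matching) by using axioms (c1), (d1), (e1), and properties (E-1)--(E-4) of $\lambda_{\sf A}$ and $\lambda_{\sf H}$; these verifications are routine but essential, so I would record them once at the beginning and cite them implicitly afterwards.

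For (P-1), set $e=id_{\sf A}(s_{\sf H}(h))$ and apply (e2) to the pair $(e,e)$, noting $e\bullet e=e$ by (a2-1). Using (d3) to simplify $\phi_{\sf H}(h,e)=h$, one gets $x=x\bullet x$ where $x=\varphi_{\sf A}(h,e)$. From (a2-3) applied to $(x,x)$ we obtain $\lambda_{\sf A}(x)\bullet x=x$, and (E-3) turns this into $id_{\sf A}(s_{\sf A}(x))=x$. Finally (e1) combined with (d3) gives $s_{\sf A}(x)=t_{\sf H}(\phi_{\sf H}(h,e))=t_{\sf H}(h)$, proving (P-1). Identity (P-2) is strictly dual: apply (e3) to $(id_{\sf H}(t_{\sf A}(a))\star id_{\sf H}(t_{\sf A}(a)),a)$, use (c3) to kill the inner $\varphi_{\sf A}$, and cancel using (a2-3) on the right and (d1).

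For (P-3), compute $\varphi_{\sf A}(h,a\bullet \lambda_{\sf A}(a))$ in two ways: on the one hand $a\bullet\lambda_{\sf A}(a)=id_{\sf A}(t_{\sf A}(a))=id_{\sf A}(s_{\sf H}(h))$ by (E-4), so (P-1) gives $id_{\sf A}(t_{\sf H}(h))$; on the other, (e2) expands it as $\varphi_{\sf A}(h,a)\bullet\varphi_{\sf A}(\phi_{\sf H}(h,a),\lambda_{\sf A}(a))$. Setting $x=\varphi_{\sf A}(h,a)$, the equation $x\bullet y=id_{\sf A}(t_{\sf A}(x))=x\bullet\lambda_{\sf A}(x)$ combined with (a2-3) (left-multiplying by $\lambda_{\sf A}(x)$) forces $y=\lambda_{\sf A}(x)$. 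Identity (P-4) is again dual, using (E-3), (e3), (P-2), and right cancellation. Once (P-3) and (P-4) are available, (P-5) and (P-6) follow immediately by substitution into (a2-3).

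For (P-7), substitute (P-4) into the left‐hand side to rewrite $\lambda_{\sf H}(\phi_{\sf H}(h,a))$ as $\phi_{\sf H}(\lambda_{\sf H}(h),\varphi_{\sf A}(h,a))$, then apply (P-3) with $(\lambda_{\sf H}(h),\varphi_{\sf A}(h,a))$ in place of $(h,a)$ to obtain $\lambda_{\sf A}(\varphi_{\sf A}(\lambda_{\sf H}(h),\varphi_{\sf A}(h,a)))$; the inner expression collapses via (c2) to $\varphi_{\sf A}(\lambda_{\sf H}(h)\star h,a)=\varphi_{\sf A}(id_{\sf H}(t_{\sf A}(a)),a)=a$ by (E-3) and (c3). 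Identity (P-8) is dual and uses (d2), (d3), (e2), (P-1), together with (P-3) and (P-4) to bring the expression into the form $\phi_{\sf H}(\lambda_{\sf H}(h),\varphi_{\sf A}(h,a\bullet\lambda_{\sf A}(a)))=\phi_{\sf H}(\lambda_{\sf H}(h),id_{\sf A}(s_{\sf H}(\lambda_{\sf H}(h))))=\lambda_{\sf H}(h)$. Finally, (P-9) is obtained by expanding its right‐hand side with (e2) and applying (P-7) and (P-8); (P-10) is the dual obtained by expanding its right‐hand side with (e3) and invoking the same two identities.

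The main obstacle I anticipate is not any single identity but the bookkeeping: at every step one must confirm that the pairs being multiplied lie in ${\sf A}_1\,{}_{s_{\sf A}}\!\times_{t_{\sf A}}{\sf A}_1$ or ${\sf H}_1\,{}_{s_{\sf H}}\!\times_{t_{\sf H}}{\sf H}_1$ and that the arguments of $\varphi_{\sf A}$ and $\phi_{\sf H}$ lie in ${\sf H}_1\,{}_{s_{\sf H}}\!\times_{t_{\sf A}}{\sf A}_1$. Because of the non-associativity of $\bullet$ and $\star$, we cannot freely reparenthesize and must instead exploit the quasigroupoid cancellation law (a2-3) each time a cancellation is needed; this is what forces the sequential structure above and what makes (P-1)--(P-2) the foundation on which everything else rests.
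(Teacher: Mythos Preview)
Your proposal is correct and follows essentially the same route as the paper's own proof: establish (P-1) and (P-2) first via (e2)/(e3) applied to identity elements plus cancellation, derive (P-3)/(P-4) by expanding $\varphi_{\sf A}(h,a\bullet\lambda_{\sf A}(a))$ (resp.\ its dual) with (e2)/(e3) and cancelling, obtain (P-5)/(P-6) as immediate consequences, then get (P-7)/(P-8) by reducing the inner expression to an identity via (c2)/(d2), and finally expand the right-hand sides of (P-9)/(P-10) using (e2)/(e3) together with (P-7)/(P-8). The only cosmetic deviations are that for (P-1) you cancel on the left and identify $s_{\sf A}(x)$ via (e1)+(d3), whereas the paper cancels on the right and identifies $t_{\sf A}(x)$ via (c1); and for (P-7) you invoke (P-4) before (P-3) while the paper uses only (P-3)---both variants are valid and equally short.
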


\begin{proof} For any $h\in  {\sf H}_1$, by (e2) of Definition \ref{mp},  we have that 
\begin{align*}
\varphi_{\sf A}(h,id_{\sf A}(s_{\sf H}(h) ))& = \varphi_{\sf A}(h,id_{\sf A}(s_{\sf H}(h)) \bullet id_{\sf A}(s_{\sf H}(h))) \\
	\;	& =\varphi_{\sf A}(h,id_{\sf A}(s_{\sf H}(h) ))\bullet \varphi_{\sf A}( \phi_{\sf H}(h,id_{\sf A}(s_{\sf H}(h) )),id_{\sf A}(s_{\sf H}(h) )).
\end{align*}

Then, by (d3) of Definition \ref{action}, $\varphi_{\sf A}(h,id_{\sf A}(s_{\sf H}(h) ))=\varphi_{\sf A}(h,id_{\sf A}(s_{\sf H}(h) ))\bullet \varphi_{\sf A}(h,id_{\sf A}(s_{\sf H}(h) ))$ holds 
and, using  (${\rm a2-3}$) of Definition \ref{quasigroupoid}, (\ref{E-4}) and  (c1) of Definition \ref{action}, we can assure that 
$$\varphi_{\sf A}(h,id_{\sf A}(s_{\sf H}(h) ))=id_{\sf A}(t_{\sf A}(\varphi_{\sf A}(h,id_{\sf A}(s_{\sf H}(h)))))=id_{\sf A}(t_{\sf H}(h)).$$

Therefore, (\ref{P-1}) holds. The proof for (\ref{P-3}) is the following: By (e2) of Definition \ref{mp}, (\ref{E-4}), (\ref{P-1})  and (c1) of Definition \ref{action} we have that
\begin{align*}
\varphi_{\sf A}(h,a)\bullet  \varphi_{\sf A}(\phi_{\sf H}(h,a), \lambda_{\sf A}(a))& = \varphi_{\sf A}(h,a\bullet \lambda_{\sf A}(a))\\
\;	& =\varphi_{\sf A}(h, id_{\sf A}(t_{\sf A}(a)))\\
\;	& =\varphi_{\sf A}(h, id_{\sf A}(s_{\sf H}(h)))\\
\;	& =id_{\sf A}(t_{\sf H}(h))\\
\;	& = id_{\sf A}(t_{\sf A}(\varphi_{\sf A}(h,a)))
\end{align*}
holds. Then, by (${\rm a2-3}$) of Definition \ref{quasigroupoid}, we obtain (\ref{P-3}). 

The identity (\ref{P-5}) follows by (\ref{P-3}) and (${\rm a2-3}$) of Definition \ref{quasigroupoid}. On the other hand, by (\ref{P-3}), (c2) of Definition \ref{action},  (\ref{E-3}),  (d1) of Definition \ref{action} and  (\ref{P-2}), 
\begin{align*}
\varphi_{\sf A}( \lambda_{\sf H}(\phi_{\sf H}(h,a)), \lambda_{\sf A}( \varphi_{\sf A}(h,a)))& = \varphi_{\sf A}( \lambda_{\sf H}(\phi_{\sf H}(h,a)),  \varphi_{\sf A}(\phi_{\sf H}(h,a), \lambda_{\sf A}(a))) \\
\;	& =\varphi_{\sf A}( \lambda_{\sf H}(\phi_{\sf H}(h,a))\star \phi_{\sf H}(h,a), \lambda_{\sf A}(a))\\
\;	& =\varphi_{\sf A}( id_{\sf H}(s_{\sf A}(a)), \lambda_{\sf A}(a))\\
\;	& =\varphi_{\sf A}( id_{\sf H}(t_{\sf A}(\lambda_{\sf A}(a))), \lambda_{\sf A}(a))\\
\;	& = \lambda_{\sf A}(a)
\end{align*}
which proves (\ref{P-7}) .

The proofs of  (\ref{P-2}),  (\ref{P-4}), (\ref{P-6}) and (\ref{P-8}) are similar. Furthermore, by (e2) of Definition \ref{mp}, (\ref{P-7}) and (\ref{P-8}), 
$$\varphi_{\sf A}(\lambda_{\sf H} (\phi_{\sf H}(h,a)),  \lambda_{\sf A}( \varphi_{\sf A}(h,a))\bullet b)$$
$$= 
\varphi_{\sf A}(\lambda_{\sf H} (\phi_{\sf H}(h,a)),  \lambda_{\sf A}( \varphi_{\sf A}(h,a)))\bullet 
\varphi_{\sf A}(\phi_{\sf H}(\lambda_{\sf H} (\phi_{\sf H}(h,a)), \lambda_{\sf A} (\varphi_{\sf A}(h,a)), b)=
\lambda_{\sf A}(a)\bullet  \varphi_{\sf A}(\lambda_{\sf H}(h),b)$$
and then (\ref{P-9}) holds. The proof of (\ref{P-10}) is similar and we left the details to the reader.
\end{proof}

\begin{theorem}
\label{match-1}
Let  $({\sf A}, {\sf H})$ be a matched pair of quasigroupoids. The pair 
$${\sf A}\bowtie {\sf H}=(({\sf A}\bowtie {\sf H})_{0}, ({\sf A}\bowtie {\sf H})_{1}),$$ where $({\sf A}\bowtie {\sf H})_{0}={\sf A}_{0}$, $({\sf A}\bowtie {\sf H})_{1}= {\sf A}_1\;_{s_{\sf A}}\hspace{-0.15cm}\times_{t_{\sf H}} {\sf H}_1$, is a quasigroupoid with source morphism $s_{{\sf A}\bowtie {\sf H}}(a,h)=s_{\sf H}(h)$, target morphism $t_{{\sf A}\bowtie {\sf H}}(a,h)=t_{\sf A}(a)$, identity map $id_{{\sf A}\bowtie {\sf H}}(x)=
(id_{\sf A}(x), id_{\sf H}(x))$, product 
$$ (a,g)._{\Psi}(b,h)=(a\bullet \Psi_{1}(g,b),\Psi_{2}(g,b)\star h ),$$
where $\Psi: {\sf H}_1\;_{s_{\sf H}}\hspace{-0.15cm}\times_{t_{\sf A}} {\sf A}_1\rightarrow {\sf A}_1\;_{s_{\sf A}}\hspace{-0.15cm}\times_{t_{\sf H}} {\sf H}_1$ is the map with components $\Psi_{1}(g,b)=\varphi_{\sf A}(g,b)$ and  $\Psi_{2}(g,b)=\phi_{\sf H}(g,b)$, and inverse map 
$$\lambda_{{\sf A}\bowtie {\sf H}}(a,h)=\Psi (\lambda_{\sf H}(h), \lambda_{\sf A}(a)).$$
\end{theorem}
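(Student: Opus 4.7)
The plan is to verify, in order, the well-definedness of the structural data and then each of the axioms (a1), (a2-1), (a2-2), (a2-3) of Definition \ref{quasigroupoid} for the candidate pair ${\sf A}\bowtie{\sf H}$. Axiom (a1) is immediate from (a1) for ${\sf A}$ (or for ${\sf H}$), and axiom (a2-2) reduces at once to (a2-2) for ${\sf A}$ and ${\sf H}$ in the first and second coordinates respectively. Well-definedness of the product $._{\Psi}$ on a composable pair $((a,g),(b,h))$, where by definition $s_{\sf H}(g)=t_{\sf A}(b)$, follows from (c1) of Definition \ref{action} (so that $a\bullet\varphi_{\sf A}(g,b)$ is legitimately defined in ${\sf A}$), from (d1) (so that $\phi_{\sf H}(g,b)\star h$ is legitimately defined in ${\sf H}$), and from (e1) of Definition \ref{mp} together with (a2-2) (so that the resulting pair again lies in $({\sf A}\bowtie{\sf H})_1$). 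Well-definedness of $\lambda_{{\sf A}\bowtie{\sf H}}$ uses (\ref{E-1}) and (\ref{E-2}) to place $(\lambda_{\sf H}(h),\lambda_{\sf A}(a))$ in the domain of $\Psi$, together with (e1) for the image. Axiom (a2-1) is then a routine check: the left unit law follows from (c3) and (\ref{P-2}) combined with the unit laws of ${\sf A}$ and ${\sf H}$, and the right unit law follows dually from (\ref{P-1}) and (d3).

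The main obstacle is the inverse axiom (a2-3). To prove $\lambda_{{\sf A}\bowtie{\sf H}}(a,g)._{\Psi}((a,g)._{\Psi}(b,h))=(b,h)$, I would abbreviate $c=\varphi_{\sf A}(\lambda_{\sf H}(g),\lambda_{\sf A}(a))$ and $k=\phi_{\sf H}(\lambda_{\sf H}(g),\lambda_{\sf A}(a))$ and expand both coordinates of the double product by the formula for $._{\Psi}$. In the first coordinate, apply (e2) and then (c2) of Definition \ref{action} to split $\varphi_{\sf A}(k,a\bullet\varphi_{\sf A}(g,b))$; next use (\ref{P-3}) combined with (\ref{E-5}) to obtain $\varphi_{\sf A}(k,a)=\lambda_{\sf A}(c)$, and use (d2) together with (\ref{E-3}) and (d3) to obtain $\phi_{\sf H}(k,a)=\lambda_{\sf H}(g)$. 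With these two simplifications, axiom (a2-3) applied in ${\sf A}$ followed by (\ref{E-3}) and (c3) collapses the first component to $b$. In the second coordinate, (d2) together with the identity $\phi_{\sf H}(k,a)=\lambda_{\sf H}(g)$ and (\ref{P-4}) yield $\phi_{\sf H}(k,a\bullet\varphi_{\sf A}(g,b))=\lambda_{\sf H}(\phi_{\sf H}(g,b))$, after which axiom (a2-3) in ${\sf H}$ collapses the second component to $h$.

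The companion identity $((a,g)._{\Psi}(b,h))._{\Psi}\lambda_{{\sf A}\bowtie{\sf H}}(b,h)=(a,g)$ is handled by an analogous calculation, pivoting this time on the simplification $\varphi_{\sf A}(h,\varphi_{\sf A}(\lambda_{\sf H}(h),\lambda_{\sf A}(b)))=\lambda_{\sf A}(b)$ obtained from (c2), (\ref{E-4}) and (c3), and drawing further on (\ref{P-3}), (\ref{P-4}), (d2) and (d3) to rearrange the second factor; the final cancellations are again provided by (a2-3) in ${\sf A}$ and in ${\sf H}$. I expect the principal nuisance throughout these expansions to be the bookkeeping of source and target conditions, ensuring at every intermediate step that each product invoked is legitimately composable; but no identities beyond those assembled in Proposition \ref{properties} and the basic relations (\ref{E-1})--(\ref{E-6}) should be required.
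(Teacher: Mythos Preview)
Your proposal is correct and follows essentially the same route as the paper's proof: well-definedness via (c1), (d1), (e1), (\ref{E-1})--(\ref{E-2}); the unit laws via (c3), (d3), (\ref{P-1}), (\ref{P-2}); and the inverse axiom (a2-3) via (e2), (d2), (\ref{P-3}), (\ref{P-4}), (\ref{E-5}), culminating in (a2-3) for ${\sf A}$ and ${\sf H}$ separately. Your isolation of the two key intermediate identities $\varphi_{\sf A}(k,a)=\lambda_{\sf A}(c)$ and $\phi_{\sf H}(k,a)=\lambda_{\sf H}(g)$, and your direct appeal to (\ref{P-4}) in the second coordinate (where the paper instead re-expands via (e3)), make the argument slightly more streamlined, but the underlying strategy is identical.
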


\begin{proof} First, note by (a1) of Definition \ref{quasigroupoid}, for all $x\in ({\sf A}\bowtie {\sf H})_{0}$ we have 
$$s_{{\sf A}\bowtie {\sf H}}(id_{{\sf A}\bowtie {\sf H}}(x))=s_{{\sf A}\bowtie {\sf H}}(id_{\sf A}(x), id_{\sf H}(x))=s_{\sf H}(id_{\sf H}(x))=x$$
and similarly $t_{{\sf A}\bowtie {\sf H}}(id_{{\sf A}\bowtie {\sf H}}(x))=x.$
Therefore, (a1) of Definition \ref{quasigroupoid} holds for ${\sf A}\bowtie {\sf H}$.
	
Secondly,  the product $._{\Psi}$ is well defined because, if $((a,g), (b,h))\in ({\sf A}\bowtie {\sf H})_{1}\;_{s_{{\sf A}\bowtie {\sf H}}}\hspace{-0.15cm}\times_{t_{{\sf A}\bowtie {\sf H}}} ({\sf A}\bowtie {\sf H})_{1}
$, we have that 
$$s_{\sf H}(g)=s_{{\sf A}\bowtie {\sf H}}(g,b)=t_{{\sf H}\bowtie {\sf H}}(g,b)=t_{\sf A}(b),$$
by (e1) of Definition \ref{mp}, $$ s_{\sf A}(\Psi_{1}(g,b))= s_{\sf A}(\varphi_{\sf A}(g,b))=t_{{\sf H}}(\phi_{\sf H}(g,b))=t_{{\sf H}}(\Psi_{2}(g,b))$$
holds, by (c1) and (d1) of Definition \ref{action}, we have the identities
$$t_{\sf A}(\Psi_{1}(g,b))=t_{\sf A}(\varphi_{\sf A}(g,b))=t_{\sf H}(g)=s_{\sf A}(a)=s_{\sf H}(\phi_{\sf H}(g,b))=s_{\sf H}(\Psi_{2}(g,b))$$
and, finally, 
$$s_{{\sf A}\bowtie {\sf H}}(a\bullet \Psi_{1}(g,b))=s_{{\sf A}}(\Psi_{1}(g,b))=t_{{\sf H}}(\Psi_{2}(g,b))=t_{{\sf A}\bowtie {\sf H}}(\Psi_{2}(g,b)\star h).$$

Also, the inverse map is well defined because, by (\ref{E-1}) and (\ref{E-2}), the pair $(a,h)$ belongs to the set $({\sf A}\bowtie {\sf H})_{1}$ iff $(\lambda_{\sf H}(h),\lambda_{\sf A}(a))\in  {\sf H}_1\;_{s_{\sf H}}\hspace{-0.15cm}\times_{t_{\sf A}} {\sf A}_1$. 

Let $(a,g)\in ({\sf A}\bowtie {\sf H})_{1}$. Then, in one hand 
\begin{itemize}
\item[ ]$\hspace{0.38cm} id_{{\sf A}\bowtie {\sf H}}(t_{{\sf A}\bowtie {\sf H}}(a,g))._{\Psi} (a,g) $
\item [ ]$=id_{{\sf A}\bowtie {\sf H}}(t_{{\sf A}}(a))._{\Psi} (a,g) $ {\scriptsize (by definition of $t_{{\sf A}\bowtie {\sf H}}$)}
\item [ ]$=(id_{{\sf A}}(t_{{\sf A}}(a)),id_{{\sf H}}(t_{{\sf A}}(a)))._{\Psi} (a,g) $ {\scriptsize (by definition of $id_{{\sf A}\bowtie {\sf H}}$)}
\item [ ]$=(id_{{\sf A}}(t_{{\sf A}}(a))\bullet \varphi_{\sf A}(id_{{\sf H}}(t_{{\sf A}}(a)),a),\phi_{\sf H}(id_{{\sf H}}(t_{{\sf A}}(a)),a)\star g ) $ {\scriptsize (by definition of $\Psi$)}
\item [ ]$=(\varphi_{\sf A}(id_{{\sf H}}(t_{{\sf A}}(a)),a),id_{\sf H}(s_{\sf A}(a))\star g ) $ {\scriptsize (by (c1) of Definition \ref{action},  (${\rm a2-1}$) of Definition \ref{quasigroupoid} and (\ref{P-2}))}
\item [ ]$=(a,id_{\sf H}(t_{\sf H}(g))\star g ) $ {\scriptsize (by (c3) of Definition \ref{action} and $(a,g)\in {\sf A}_1\;_{s_{\sf A}}\hspace{-0.15cm}\times_{t_{\sf H}} {\sf H}_1$)}
\item [ ]$=(a, g )$ {\scriptsize (by (a2-1) of Definition \ref{quasigroupoid})}
\end{itemize}
and, on the other hand, the proof for $(a,g)._{\Psi} id_{{\sf A}\bowtie {\sf H}}(s_{{\sf A}\bowtie {\sf H}}(a,g))= (a, g )$ follows by a similar calculus.
Thus,  (a2-1) of Definition \ref{quasigroupoid} holds for ${\sf A}\bowtie {\sf H}$. Moreover, 
$$s_{{\sf A}\bowtie {\sf H}} ((a,g)._{\Psi}(b,h))=s_{{\sf A}\bowtie {\sf H}} ((a\bullet \varphi_{\sf A}(g,b),\phi_{\sf H}(g,b)\star h)=s_{{\sf H}}(\phi_{\sf H}(g,b)\star h)=s_{{\sf H}}(h)= s_{{\sf A}\bowtie {\sf H}}(b,h)$$
and, similarly, $t_{{\sf A}\bowtie {\sf H}} ((a,g)._{\Psi}(b,h))=t_{{\sf A}\bowtie {\sf H}} ((a,g))$. Then, we have that  (${\rm a 2-2}$) of Definition \ref{quasigroupoid} also holds for ${\sf A}\bowtie {\sf H}$.

Let $ ((a,g),(b,h))\in ({\sf A}\bowtie {\sf H})_{1}\;_{s_{\sf {\sf A}\bowtie {\sf H}}}\hspace{-0.15cm}\times_{t_{{\sf A}\bowtie {\sf H}}} ({\sf A}\bowtie {\sf H})_{1}$. Then, by (d1) of Definition \ref{action}, 
\begin{align*}
	s_{{\sf A}\bowtie {\sf H}}(\lambda_{{\sf A}\bowtie {\sf H}}(a,g))& =s_{{\sf A}\bowtie {\sf H}}(\varphi_{\sf A}(\lambda_{\sf H}(g), \lambda_{\sf A}(a)), \phi_{\sf H}(\lambda_{\sf H}(g), \lambda_{\sf A}(a)))\\
	\;	& =s_{{\sf H}}(\phi_{\sf H}(\lambda_{\sf H}(g), \lambda_{\sf A}(a)))\\
	\;	& =(s_{{\sf A}}(\lambda_{\sf A}(a))\\
	\;	& = t_{{\sf A}}(a)\\
	\;	& = t_{{\sf A}}(a\bullet \varphi_{\sf A}(g,b))\\
	\;	& =t_{{\sf A}\bowtie {\sf H}}((a,g)._{\Psi}(b,h))
\end{align*}
hold and this implies that  $(\lambda_{{\sf A}\bowtie {\sf H}}(a,g),(a,g)._{\Psi}(b,h))\in ({\sf A}\bowtie {\sf H})_{1}\;_{s_{\sf {\sf A}\bowtie {\sf H}}}\hspace{-0.15cm}\times_{t_{{\sf A}\bowtie {\sf H}}} ({\sf A}\bowtie {\sf H})_{1}$. Similarly, we can prove that $((a,g)._{\Psi}(b,h), \lambda_{{\sf A}\bowtie {\sf H}}(b,h))\in ({\sf A}\bowtie {\sf H})_{1}\;_{s_{\sf {\sf A}\bowtie {\sf H}}}\hspace{-0.15cm}\times_{t_{{\sf A}\bowtie {\sf H}}} ({\sf A}\bowtie {\sf H})_{1}$. 
Finally, 
$$ \lambda_{{\sf A}\bowtie {\sf H}}(a,g)._{\Psi}((a,g)._{\Psi}(b,h))= $$
$$(\varphi_{\sf A}(\lambda_{\sf H}(g), \lambda_{\sf A}(a))\bullet  \varphi_{\sf A}(\phi_{\sf H}(\lambda_{\sf H}(g), \lambda_{\sf A}(a)), a\bullet \varphi_{\sf A}(g,b)), 
\phi_{\sf H}(\phi_{\sf H}(\lambda_{\sf H}(g), \lambda_{\sf A}(a)), a\bullet \varphi_{\sf A}(g,b))\star (\phi_{\sf H}(g,b)\star h ))$$ 
where 
\begin{itemize}
\item[ ]$\hspace{0.38cm} \varphi_{\sf A}(\lambda_{\sf H}(g), \lambda_{\sf A}(a))\bullet  \varphi_{\sf A}(\phi_{\sf H}(\lambda_{\sf H}(g), \lambda_{\sf A}(a)), a\bullet \varphi_{\sf A}(g,b)) $
\item [ ]$=\varphi_{\sf A}(\lambda_{\sf H}(g), \lambda_{\sf A}(a))\bullet  (\varphi_{\sf A}(\phi_{\sf H}(\lambda_{\sf H}(g), \lambda_{\sf A}(a)),a)\bullet \varphi_{\sf A}(\phi_{\sf H}(\phi_{\sf H}(\lambda_{\sf H}(g), \lambda_{\sf A}(a)),a), \varphi_{\sf A}(g,b))) $ {\scriptsize (by}
\item[ ]$\hspace{0.38cm}$  {\scriptsize (e2) of  Definition \ref{mp})}
\item [ ]$=\varphi_{\sf A}(\lambda_{\sf H}(g), \lambda_{\sf A}(a))\bullet  (\varphi_{\sf A}(\phi_{\sf H}(\lambda_{\sf H}(g), \lambda_{\sf A}(a)),\lambda_{\sf A}(\lambda_{\sf A}(a))) $ 
\item[ ]$\hspace{0.38cm}\bullet \varphi_{\sf A}(\phi_{\sf H}(\phi_{\sf H}(\lambda_{\sf H}(g), \lambda_{\sf A}(a)),a), \varphi_{\sf A}(g,b)))$  {\scriptsize (by (\ref{E-5}))}
\item [ ]$=\lambda_{\sf A}(\lambda_{\sf A}(\varphi_{\sf A}(\lambda_{\sf H}(g), \lambda_{\sf A}(a))))\bullet  (\lambda_{\sf A}(\varphi_{\sf A}(\lambda_{\sf H}(g), \lambda_{\sf A}(a))) \bullet \varphi_{\sf A}(\phi_{\sf H}(\phi_{\sf H}(\lambda_{\sf H}(g), \lambda_{\sf A}(a)),a), \varphi_{\sf A}(g,b)))$
\item[ ]$\hspace{0.38cm}$  {\scriptsize (by (\ref{E-5}) and  (\ref{P-3}))}
\item [ ]$= \varphi_{\sf A}(\phi_{\sf H}(\phi_{\sf H}(\lambda_{\sf H}(g), \lambda_{\sf A}(a)),a), \varphi_{\sf A}(g,b))$ {\scriptsize (by (${\rm a2-3}$) of Definition  \ref{quasigroupoid})}
\item [ ]$=\varphi_{\sf A}(\phi_{\sf H}(\lambda_{\sf H}(g), \lambda_{\sf A}(a)\bullet a), \varphi_{\sf A}(g,b)) $ {\scriptsize (by (d2) of Definition  \ref{action})}
\item [ ]$=\varphi_{\sf A}(\phi_{\sf H}(\lambda_{\sf H}(g), id_{\sf A}(s_{\sf A}(a))), \varphi_{\sf A}(g,b)) $ {\scriptsize (by (\ref{E-3}))}
\item [ ]$=\varphi_{\sf A}(\phi_{\sf H}(\lambda_{\sf H}(g), id_{\sf A}(t_{\sf H}(g))), \varphi_{\sf A}(g,b))  $ {\scriptsize (by $(a,g)\in {\sf A}_1\;_{s_{\sf A}}\hspace{-0.15cm}\times_{t_{\sf H}} {\sf H}_1$)}
\item [ ]$=\varphi_{\sf A}(\phi_{\sf H}(\lambda_{\sf H}(g), id_{\sf A}(s_{\sf H}(\lambda_{\sf H}(g)))), \varphi_{\sf A}(g,b))  $ {\scriptsize (by (\ref{E-1}))}
\item [ ]$=\varphi_{\sf A}(\lambda_{\sf H}(g), \varphi_{\sf A}(g,b))  $ {\scriptsize (by (d3) of Definition  \ref{action})}
\item [ ]$=\varphi_{\sf A}(\lambda_{\sf H}(g)\star g, b)  $ {\scriptsize (by (c2) of Definition  \ref{action})}
\item [ ]$=\varphi_{\sf A}(id_{\sf H}(s_{\sf H}(g)), b) $ {\scriptsize (by (\ref{E-1}))}
\item [ ]$= \varphi_{\sf A}(id_{\sf H}(t_{\sf A}(b)), b)$ {\scriptsize (by $s_{\sf H}(g) =s_{{\sf A}\bowtie {\sf H}}(a,g)=t_{{\sf A}\bowtie {\sf H}}(b,h)=t_{\sf A}(b)$)}
\item [ ]$=b $ {\scriptsize (by (c3) of Definition  \ref{action})}
\end{itemize}
and 
\begin{itemize}
\item[ ]$\hspace{0.38cm} \phi_{\sf H}(\phi_{\sf H}(\lambda_{\sf H}(g), \lambda_{\sf A}(a)), a\bullet \varphi_{\sf A}(g,b))\star (\phi_{\sf H}(g,b)\star h ) $
\item [ ]$=\phi_{\sf H}(\phi_{\sf H}(\phi_{\sf H}(\lambda_{\sf H}(g), \lambda_{\sf A}(a)), a),  \varphi_{\sf A}(g,b))\star (\phi_{\sf H}(g,b)\star h )  $ {\scriptsize (by (d2) of Definition  \ref{action})}
\item [ ]$=\phi_{\sf H}(\phi_{\sf H}(\lambda_{\sf H}(g), \lambda_{\sf A}(a)\bullet a),  \varphi_{\sf A}(g,b))\star (\phi_{\sf H}(g,b)\star h )  $ {\scriptsize (by (d2) of Definition  \ref{action})}
\item [ ]$=\phi_{\sf H}(\phi_{\sf H}(\lambda_{\sf H}(g), id_{\sf A}(s_{\sf A}(a))),  \varphi_{\sf A}(g,b))\star (\phi_{\sf H}(g,b)\star h ) $ {\scriptsize (by (\ref{E-3}))}
\item [ ]$= \phi_{\sf H}(\phi_{\sf H}(\lambda_{\sf H}(g), id_{\sf A}(t_{\sf H}(g))),  \varphi_{\sf A}(g,b))\star (\phi_{\sf H}(g,b)\star h )$ {\scriptsize (by $(a,g)\in {\sf A}_1\;_{s_{\sf A}}\hspace{-0.15cm}\times_{t_{\sf H}} {\sf H}_1$)}
\item [ ]$=\phi_{\sf H}(\phi_{\sf H}(\lambda_{\sf H}(g), id_{\sf A}(s_{\sf H}(\lambda_{\sf H}(g)))),  \varphi_{\sf A}(g,b))\star (\phi_{\sf H}(g,b)\star h ) $ {\scriptsize (by (\ref{E-1}))}
\item [ ]$=\phi_{\sf H}(\lambda_{\sf H}(g),  \varphi_{\sf A}(g,b))\star (\phi_{\sf H}(g,b)\star h ) $ {\scriptsize (by (d3) of Definition  \ref{action})}
\item [ ]$=(\phi_{\sf H}(\lambda_{\sf H}(g)\star g, b)\star \lambda_{\sf H}(\phi_{\sf H}(g,b)))\star (\phi_{\sf H}(g,b)\star h )  $ {\scriptsize (by (e3) of Definition  \ref{mp} and (${\rm a2-3}$) of }
\item[ ]$\hspace{0.38cm}$ {\scriptsize Definition \ref{quasigroupoid})}
\item [ ]$=(\phi_{\sf H}(id_{\sf H}(s_{\sf H}(g)), b)\star \lambda_{\sf H}(\phi_{\sf H}(g,b)))\star (\phi_{\sf H}(g,b)\star h ) $ {\scriptsize (by (\ref{E-3}))}
\item [ ]$=(\phi_{\sf H}(id_{\sf H}(t_{\sf A}(b)),b)\star \lambda_{\sf H}(\phi_{\sf H}(g,b)))\star (\phi_{\sf H}(g,b)\star h ) $ {\scriptsize (by $s_{\sf H}(g) =s_{{\sf A}\bowtie {\sf H}}(a,g)=t_{{\sf A}\bowtie {\sf H}}(b,h)=t_{\sf A}(b)$)}
\item [ ]$=(id_{\sf H}(s_{\sf A}(b))\star \lambda_{\sf H}(\phi_{\sf H}(g,b)))\star (\phi_{\sf H}(g,b)\star h ) $ {\scriptsize (by (\ref{P-2}))}
\item [ ]$= \lambda_{\sf H}(\phi_{\sf H}(g,b))\star (\phi_{\sf H}(g,b)\star h )  $ {\scriptsize (by (d1) of Definition  \ref{action} and (${\rm a2-1}$) of Definition \ref{quasigroupoid})}
\item [ ]$= h   $ {\scriptsize (by (${\rm a2-3}$) of Definition \ref{quasigroupoid})}.
\end{itemize}

Thus,  $\lambda_{{\sf A}\bowtie {\sf H}}(a,g)._{\Psi}((a,g)._{\Psi}(b,h))=(b,h)$ and, by a similar proof, 
we obtain that $$((a,g)._{\Psi}(b,h))._{\Psi}\lambda_{{\sf A}\bowtie {\sf H}}(b,h)=(a,g).$$  Therefore, (a2-3) of Definition \ref{quasigroupoid}  holds for ${\sf A}\bowtie {\sf H}$ and we can assure that ${\sf A}\bowtie {\sf H}$  is a quasigroupoid.
\end{proof}

\begin{definition}
\label{dbcp}
{\rm Let  $({\sf A}, {\sf H})$ be a matched pair of quasigroupoids. The quasigroupoid ${\sf A}\bowtie {\sf H}$ will be called the double crossed product or the diagonal quasigroupoid of ${\sf A}$ and ${\sf H}$.
}
\end{definition}

In the following lines we will give two simple examples of matched pair of quasigroupoids.

\begin{example}
\label{ex-mp}
{\rm  Let ${\sf A}$ be a quasigroupoid and denote by $X$ the base set of ${\sf A}$.  Then, $({\sf A}, {\sf X}^d)$ is an example of matched pair of quasigroupoids  where $\varphi_{\sf A}(x,a)=a$ and 
$\phi_{{\sf X}^d}(x,a)=s_{\sf A}(a)$. Then, the diagonal quasigroupoid ${\sf A}\bowtie {\sf X}^d$, is defined by $$({\sf A}\bowtie {\sf X}^d)_{0}={\sf A}_{0}, \;\;\;({\sf A}\bowtie {\sf X}^d)_{1}= {\sf A}_1\;_{s_{\sf A}}\hspace{-0.15cm}\times_{t_{{\sf X}^d}} {\sf X}^d_1$$
$$s_{{\sf A}\bowtie {{\sf X}^d}}(a,x)=x, \;\;\;t_{{\sf A}\bowtie {{\sf X}^d}}(a,x)=t_{\sf A}(a), \;\;\;id_{{\sf A}\bowtie {\sf X}^d}(x)=
(id_{\sf A}(x), x),$$
$$(a,x)._{\Psi}(b,y)=(a\bullet b,y )$$
and $\lambda_{{\sf A}\bowtie {\sf X}^d}(a,x)=(\lambda_{\sf A}(a), t_{\sf A}(a)).$
}
\end{example}

\begin{example}
\label{ex-mp1}
{\rm
Let $A$ be a quasigroup with product $\cdot$ and let $X$ be a set. Assume that there exists an action of $A$ over $X$ denoted by $\psi_{X}:A\times X\rightarrow X$. Let   ${\sf B}=({\sf B}_0,{\sf B}_1)$ be the quasigroupoid associated to the action $\psi_{X}$  (see Example \ref{exquasi}). Let  ${\sf X}^{d}$ be the discrete groupoid. Then, $({\sf X}^{d}, {\sf B})$ is a matched pair of quasigroupoids where $\varphi_{{\sf X}^{d}}((a,x), x)=\psi_{X}(a,x)$ and $\phi_{{\sf B}}((a,x), x)=(a,x)$. In this case the diagonal quasigroupoid ${\sf X}^{d}\bowtie {\sf B}$, is defined by $$({\sf X}^{d}\bowtie {\sf B})_{0}=X, \;\;\; ({\sf X}^{d}\bowtie {\sf B})_{1}= {\sf X}^d_1\;_{s_{{\sf X}^d}}\hspace{-0.15cm}\times_{t_{{\sf B}}} {\sf B}_1,$$ 
$$s_{{\sf X}^{d}\bowtie {\sf B}}(y,(a,x))=x, \;\;\;t_{{\sf X}^{d}\bowtie {\sf B}}(y,(a,x))=y, \;\;\;id_{{\sf X}^{d}\bowtie {\sf B}}(x)=
(x, (e_{A},x)),$$ $$(y,(a,x))._{\Psi}(x,(b,t))=(y, (a\cdot b, t))$$
and $$\lambda_{{\sf X}^{d}\bowtie {\sf B}}(y,(a,x))=(\psi_{X}(a^{-1},y),(a^{-1},y)).$$

}
\end{example}

\begin{theorem}
\label{sub}
Let  $({\sf A}, {\sf H})$ be a matched pair of quasigroupoids. Then, ${\sf A}$ and ${\sf H}$ are subquasigroupoids of ${\sf A}\bowtie {\sf H}$.
\end{theorem}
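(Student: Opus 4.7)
The plan is to exhibit explicit monomorphisms $i^{\sf A}:{\sf A}\to {\sf A}\bowtie {\sf H}$ and $i^{\sf H}:{\sf H}\to {\sf A}\bowtie {\sf H}$ in ${\sf QGPD}$. Since both quasigroupoids share the base ${\sf A}_{0}={\sf H}_{0}={\sf A}_{0}$, on objects we simply take $i^{\sf A}_{0}=i^{\sf H}_{0}=\mathrm{id}_{{\sf A}_{0}}$. On arrows, the natural choice is
\begin{align*}
i^{\sf A}_{1}(a)&=(a,\,id_{\sf H}(s_{\sf A}(a))),\\
i^{\sf H}_{1}(h)&=(id_{\sf A}(t_{\sf H}(h)),\,h),
\end{align*}
both of which land in $({\sf A}\bowtie {\sf H})_{1}={\sf A}_{1}{}_{s_{\sf A}}\hspace{-0.1cm}\times_{t_{\sf H}}{\sf H}_{1}$ by (a1) of Definition \ref{quasigroupoid}. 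Injectivity of $i^{\sf A}_{1}$ and $i^{\sf H}_{1}$ is immediate from the first (resp.\ second) projection.

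Next I would verify conditions (b1)--(b3) of Definition \ref{mor-loop}. For $i^{\sf A}$, using the definitions of $s_{{\sf A}\bowtie {\sf H}}$, $t_{{\sf A}\bowtie {\sf H}}$ and $id_{{\sf A}\bowtie {\sf H}}$ given in Theorem \ref{match-1} together with (a1) of Definition \ref{quasigroupoid}, we get $s_{{\sf A}\bowtie {\sf H}}(i^{\sf A}_{1}(a))=s_{\sf H}(id_{\sf H}(s_{\sf A}(a)))=s_{\sf A}(a)$ and $t_{{\sf A}\bowtie {\sf H}}(i^{\sf A}_{1}(a))=t_{\sf A}(a)$, whereas $i^{\sf A}_{1}(id_{\sf A}(x))=(id_{\sf A}(x),id_{\sf H}(x))=id_{{\sf A}\bowtie {\sf H}}(x)$. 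The calculation for $i^{\sf H}$ is symmetric.

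The main step is compatibility with products (b4). For $(a,b)\in {\sf A}_{1}{}_{s_{\sf A}}\hspace{-0.1cm}\times_{t_{\sf A}}{\sf A}_{1}$, we have $id_{\sf H}(s_{\sf A}(a))=id_{\sf H}(t_{\sf A}(b))$, so
\begin{align*}
i^{\sf A}_{1}(a)._{\Psi}i^{\sf A}_{1}(b) &=\bigl(a\bullet \varphi_{\sf A}(id_{\sf H}(t_{\sf A}(b)),b),\;\phi_{\sf H}(id_{\sf H}(t_{\sf A}(b)),b)\star id_{\sf H}(s_{\sf A}(b))\bigr).
\end{align*}
Applying (c3) of Definition \ref{action} to the first component and (\ref{P-2}) of Proposition \ref{properties} to the second, the expression collapses to $(a\bullet b,\,id_{\sf H}(s_{\sf A}(b))\star id_{\sf H}(s_{\sf A}(b)))$, and (${\rm a2-1}$) of Definition \ref{quasigroupoid} gives $(a\bullet b,\,id_{\sf H}(s_{\sf A}(b)))=i^{\sf A}_{1}(a\bullet b)$ since $s_{\sf A}(a\bullet b)=s_{\sf A}(b)$ by (${\rm a2-2}$). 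For $i^{\sf H}$, given $(g,h)\in {\sf H}_{1}{}_{s_{\sf H}}\hspace{-0.1cm}\times_{t_{\sf H}}{\sf H}_{1}$, the analogous computation uses (\ref{P-1}) to simplify $\varphi_{\sf A}(g,id_{\sf A}(s_{\sf H}(g)))=id_{\sf A}(t_{\sf H}(g))$ and (d3) of Definition \ref{action} to simplify $\phi_{\sf H}(g,id_{\sf A}(s_{\sf H}(g)))=g$, yielding $i^{\sf H}_{1}(g\star h)$ after a further application of (${\rm a2-1}$).

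The only real obstacle is keeping track of which identity element lives where and invoking the correct auxiliary identity (\ref{P-1}) or (\ref{P-2})—both of which were proved precisely to make these unit arrows behave as expected under $\varphi_{\sf A}$ and $\phi_{\sf H}$. Once these are in place the verification of (b1)--(b4) is routine, so $i^{\sf A}$ and $i^{\sf H}$ are monomorphisms in ${\sf QGPD}$ and the theorem follows from Definition \ref{sub}.
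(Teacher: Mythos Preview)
Your proof is correct and follows essentially the same approach as the paper: you define the same inclusion morphisms $i^{\sf A}_{1}(a)=(a,id_{\sf H}(s_{\sf A}(a)))$ and $i^{\sf H}_{1}(h)=(id_{\sf A}(t_{\sf H}(h)),h)$, verify (b1)--(b3) directly, and establish (b4) for $i^{\sf A}$ via (c3) and (\ref{P-2}) (and, in your more explicit sketch for $i^{\sf H}$, via (\ref{P-1}) and (d3)). The paper omits the $i^{\sf H}$ verification with a ``similarly'', so your treatment is in fact slightly more detailed, but there is no substantive difference.
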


\begin{proof} Define  $i^{\sf A}: {\sf A}\rightarrow {\sf A}\bowtie {\sf H}$ as $i^{\sf A}=(i^{\sf A}_{0}:{\sf A}_{0}\rightarrow ({\sf A}\bowtie {\sf H})_{0}, i^{\sf A}_{1}:{\sf A}_{1}\rightarrow ({\sf A}\bowtie {\sf H})_{1})$ where 
$$i^{\sf A}_{0}=id_{{\sf A}_{0}}, \;\;\; i^{\sf A}_{1}(a)=(a, id_{\sf H}(s_{\sf A}(a))).$$

Then, 
$$i^{\sf A}_{0}(s_{\sf A}(a))=s_{\sf A}(a)=s_{{\sf A}\bowtie {\sf H}}(a, id_{\sf H}(s_{\sf A}(a)))=s_{{\sf A}\bowtie {\sf H}}( i^{\sf A}_{1}(a))$$
and 
$$i^{\sf A}_{0}(t_{\sf A}(a))=t_{\sf A}(a)=t_{{\sf A}\bowtie {\sf H}}(a, id_{\sf H}(t_{\sf A}(a)))=t_{{\sf A}\bowtie {\sf H}}( i^{\sf A}_{1}(a)).$$

Also, for all $x\in {\sf A_{0}}$, 
$$i^{\sf A}_{1}(id_{\sf A}(x))=(id_{\sf A}(x), id_{\sf H}(x))=id_{{\sf A}\bowtie {\sf H}}(x)=id_{{\sf A}\bowtie {\sf H}}(i^{\sf A}_{0}(x))$$
holds. 

Therefore, (b1), (b2) and (b3) of  Definition \ref{mor-loop} hold. Finally, if $(a,b)\in {\sf A}_1\;_{s_{\sf A}}\hspace{-0.15cm}\times_{t_{\sf A}} {\sf A}_1$ we have that $s_{{\sf A}\bowtie {\sf H}}( i^{\sf A}_{1}(a))=s_{\sf A}(a)=t_{\sf A}(b)=t_{{\sf A}\bowtie {\sf H}}( i^{\sf A}_{1}(b))$ and this implies that there exists  the product $i^{\sf A}_{1}(a)._{\Psi}i^{\sf A}_{1}(b)$.  Then, in one hand, 
$$i^{\sf A}_{1}(a\bullet b)=(a\bullet b, id_{\sf H}(s_{\sf A}(a\bullet b)) )=(a\bullet b, id_{\sf H}(s_{\sf A}( b)))$$
and, on the other hand, 
\begin{itemize}
\item[ ]$\hspace{0.38cm} i^{\sf A}_{1}(a)._{\Psi}i^{\sf A}_{1}(b)$
\item [ ]$= (a, id_{\sf H}(s_{\sf A}(a)))._{\Psi}(b, id_{\sf H}(s_{\sf A}(b)))   $ {\scriptsize (by Definition of $i^{\sf A}_{1}$)}.
\item [ ]$= (a\bullet \varphi_{\sf A}(id_{\sf H}(s_{\sf A}(a)), b),  \phi_{\sf H}(id_{\sf H}(s_{\sf A}(a)), b)\star id_{\sf H}(s_{\sf A}(b)))  $ {\scriptsize (by Definition of $._{\Psi}$)}
\item [ ]$= (a\bullet \varphi_{\sf A}(id_{\sf H}(t_{\sf A}(b)), b),  \phi_{\sf H}(id_{\sf H}(t_{\sf A}(b)), b)\star id_{\sf H}(s_{\sf A}(b)))  $ {\scriptsize (by $(a,b)\in {\sf A}_1\;_{s_{\sf A}}\hspace{-0.15cm}\times_{t_{\sf A}} {\sf A}_1$)}.
\item [ ]$= (a\bullet b,  id_{\sf H}(s_{\sf A}(b))) $ {\scriptsize (by (c3)  of Definition \ref{action} and (\ref{P-2}))}.
\end{itemize}

As a consequence,  (b4) of  Definition \ref{mor-loop} holds and ${\sf A}$ is a subquasigroupoid of ${\sf A}\bowtie {\sf H}$. Similarly, if we define  $i^{\sf H}: {\sf H}\rightarrow {\sf A}\bowtie {\sf H}$ as $i^{\sf H}=(i^{\sf H}_{0}:{\sf A}_{0}\rightarrow ({\sf A}\bowtie {\sf H})_{0}, i^{\sf H}_{1}:{\sf H}_{1}\rightarrow ({\sf A}\bowtie {\sf H})_{1})$ where 
$$i^{\sf H}_{0}=id_{{\sf A}_{0}}, \;\;\; i^{\sf H}_{1}(g)=( id_{\sf A}(t_{\sf H}(g)), g),$$
it is possible to prove that ${\sf H}$ is a subquasigroupoid of ${\sf A}\bowtie {\sf H}$. 
\end{proof}

\begin{lemma}
\label{lem1}
Let  $({\sf A}, {\sf H})$ be a matched pair of quasigroupoids. Let $i^{\sf A}$, $i^{\sf H}$ be the monomorphisms defined in the previous theorem. 
\begin{itemize}
\item[(i)] If $(g,a)\in {\sf H}_1\;_{s_{\sf H}}\hspace{-0.15cm}\times_{t_{\sf A}} {\sf A}_1$ and $(a,b)\in {\sf A}_1\;_{s_{\sf A}}\hspace{-0.15cm}\times_{t_{\sf A}} {\sf A}_1$, the following equality holds:
\begin{equation}
\label{HAA} i^{\sf H}_1(g)._{\Psi}(i^{\sf A}_1(a)._{\Psi}i^{\sf A}_1(b))=(i^{\sf H}_1(g)._{\Psi}i^{\sf A}_1(a))._{\Psi}i^{\sf A}_1(b).
\end{equation}
\item[(ii)] If $(g,h)\in {\sf H}_1\;_{s_{\sf H}}\hspace{-0.15cm}\times_{t_{\sf H}} {\sf H}_1$ and $(h,a)\in {\sf H}_1\;_{s_{\sf H}}\hspace{-0.15cm}\times_{t_{\sf A}} {\sf A}_1$, the following equality holds:
\begin{equation}
\label{HHA} i^{\sf H}_1(g)._{\Psi}(i^{\sf H}_1(h)._{\Psi}i^{\sf A}_1(a))=(i^{\sf H}_1(g)._{\Psi}i^{\sf H}_1(h))._{\Psi}i^{\sf A}_1(a).
\end{equation}
\item[(iii)] If $(h,a)\in {\sf H}_1\;_{s_{\sf H}}\hspace{-0.15cm}\times_{t_{\sf A}} {\sf A}_1$ and $(a,f)\in {\sf A}_1\;_{s_{\sf A}}\hspace{-0.15cm}\times_{t_{\sf H}} {\sf H}_1$, the following equality holds:
\begin{equation}
	\label{HAH} i^{\sf H}_1(h)._{\Psi}(i^{\sf A}_1(a)._{\Psi}i^{\sf H}_1(f))=(i^{\sf H}_1(h)._{\Psi}i^{\sf A}_1(a))._{\Psi}i^{\sf H}_1(f).
\end{equation}
\item[(iv)] If $(c,h)\in {\sf A}_1\;_{s_{\sf A}}\hspace{-0.15cm}\times_{t_{\sf H}} {\sf A}_1$ and $(h,a)\in {\sf H}_1\;_{s_{\sf H}}\hspace{-0.15cm}\times_{t_{\sf A}} {\sf A}_1$, the following equality holds:
\begin{equation}
\label{AHA} i^{\sf A}_1(c)._{\Psi}(i^{\sf H}_1(h)._{\Psi}i^{\sf A}_1(a))=(i^{\sf A}_1(c)._{\Psi}i^{\sf H}_1(h))._{\Psi}i^{\sf A}_1(a).
\end{equation}
\item[(v)] If $(a,b)\in {\sf A}_1\;_{s_{\sf A}}\hspace{-0.15cm}\times_{t_{\sf A}} {\sf A}_1$ and $(b,g)\in {\sf A}_1\;_{s_{\sf A}}\hspace{-0.15cm}\times_{t_{\sf H}} {\sf H}_1$, the following equality holds:
\begin{equation}
\label{AAH} i^{\sf A}_1(a)._{\Psi}(i^{\sf A}_1(b)._{\Psi}i^{\sf H}_1(g))=(i^{\sf A}_1(a)._{\Psi}i^{\sf A}_1(b))._{\Psi}i^{\sf H}_1(g).
\end{equation}
\item[(vi)] If $(a,g)\in {\sf A}_1\;_{s_{\sf A}}\hspace{-0.15cm}\times_{t_{\sf H}} {\sf H}_1$ and $(g,h)\in {\sf H}_1\;_{s_{\sf H}}\hspace{-0.15cm}\times_{t_{\sf H}} {\sf H}_1$, the following equality holds:
\begin{equation}
\label{AHH} i^{\sf A}_1(a)._{\Psi}(i^{\sf H}_1(g)._{\Psi}i^{\sf H}_1(h))=(i^{\sf A}_1(a)._{\Psi}i^{\sf H}_1(h))._{\Psi}i^{\sf H}_1(g).
\end{equation}

\end{itemize}
\end{lemma}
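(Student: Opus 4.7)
The plan is to reduce each of the six equalities to a short expansion of the product $._{\Psi}$ introduced in Theorem \ref{match-1}, together with the matched-pair axioms (e1)--(e3) of Definition \ref{mp} and the action axioms (c1)--(c3), (d1)--(d3) of Definition \ref{action}. The structural observation is that all six identities simply translate into ${\sf A}\bowtie {\sf H}$ the compatibility already encoded in the axioms of a matched pair; no new computational input is needed beyond what is already available in Proposition \ref{properties} and the proof of Theorem \ref{match-1}.

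First I would establish four building-block formulas describing the ``mixed'' products of the distinguished elements $i^{\sf A}_1(a)=(a, id_{\sf H}(s_{\sf A}(a)))$ and $i^{\sf H}_1(g)=(id_{\sf A}(t_{\sf H}(g)),g)$. Formula (a), $i^{\sf A}_1(a)._{\Psi} i^{\sf A}_1(b)=i^{\sf A}_1(a\bullet b)$, is already contained in the proof of Theorem \ref{sub}. Formula (b), $i^{\sf H}_1(g)._{\Psi} i^{\sf H}_1(h)=i^{\sf H}_1(g\star h)$, is analogous, using (\ref{P-1}) and (d3) of Definition \ref{action}. Formula (c), $i^{\sf H}_1(g)._{\Psi} i^{\sf A}_1(a)=(\varphi_{\sf A}(g,a),\phi_{\sf H}(g,a))$, follows by unwinding $._{\Psi}$ and absorbing the two identity factors via (c1), (d1) of Definition \ref{action} and axiom (${\rm a2}$-$1$) of Definition \ref{quasigroupoid}. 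Formula (d), $i^{\sf A}_1(c)._{\Psi} i^{\sf H}_1(h)=(c,h)$ whenever $s_{\sf A}(c)=t_{\sf H}(h)$, follows from (c3), (d3) of Definition \ref{action} applied to the identity elements that appear in the second coordinate of $i^{\sf A}_1(c)$ and the first coordinate of $i^{\sf H}_1(h)$, together with (${\rm a2}$-$1$).

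With those formulas in hand each equation is a one- or two-line calculation. For (\ref{HAA}) I expand the inner products on both sides via (a) and (c); the first component of each side reduces to $\varphi_{\sf A}(g,a\bullet b)$ by (e2) and the second to $\phi_{\sf H}(g,a\bullet b)$ by (d2). For (\ref{HHA}) I apply (b) to the inner product on the RHS and (c) to both sides; the equality is then precisely (c2) of Definition \ref{action} in the first component and (e3) of Definition \ref{mp} in the second. For (\ref{HAH}) I apply (d) inside the LHS and (c) inside the RHS; both sides collapse to $(\varphi_{\sf A}(h,a),\phi_{\sf H}(h,a)\star f)$ after (\ref{P-1}) and (d3). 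For (\ref{AHA}) I apply (c) inside the LHS and (d) inside the RHS; both sides give $(c\bullet\varphi_{\sf A}(h,a),\phi_{\sf H}(h,a))$ after using (c3), (\ref{P-2}), (e1), (\ref{E-1})--(\ref{E-2}) and the unit law. For (\ref{AAH}) I apply (d) inside the LHS and (a) inside the RHS; both sides yield $(a\bullet b,g)$. For (\ref{AHH}) I apply (b) inside the LHS and (d) inside the RHS; both sides yield $(a,g\star h)$.

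The only genuine obstacle is bookkeeping: every intermediate expression must be verified to lie in the correct fibred-product set before a matched-pair axiom can be applied, forcing repeated translation between $s_{\sf A},t_{\sf A},s_{\sf H},t_{\sf H}$ values by means of (c1), (d1), (e1) and (\ref{E-1})--(\ref{E-2}). The substantive content lies in (\ref{HAA}) and (\ref{HHA}), which are the direct incarnations of the compatibility axioms (e2)+(d2) and (c2)+(e3) inside ${\sf A}\bowtie {\sf H}$; the remaining four identities reduce to unit-absorption arguments and are essentially formal.
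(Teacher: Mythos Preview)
Your proposal is correct and follows essentially the same approach as the paper: the paper proves only (i) in detail by expanding both sides directly via the definitions of $i^{\sf A}_1$, $i^{\sf H}_1$ and $._{\Psi}$, reducing each side to $(\varphi_{\sf A}(g,a\bullet b),\phi_{\sf H}(g,a\bullet b))$ using (e2) of Definition~\ref{mp} and (d2) of Definition~\ref{action}, and leaves (ii)--(vi) to the reader. Your building-block formulas (a)--(d) are exactly the intermediate computations that appear in the paper's expansion (formula (a) is proved in Theorem~\ref{sub}, (c) is the paper's RHS computation, and (d) is Lemma~\ref{lem2}), so your version is the same argument organized slightly more systematically.
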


\begin{proof} We will prove (i). The proofs for (ii)-(vi)  are similar and we left the details to the reader. 
	
Let $(g,a)\in {\sf H}_1\;_{s_{\sf H}}\hspace{-0.15cm}\times_{t_{\sf A}} {\sf A}_1$ and $(a,b)\in {\sf A}_1\;_{s_{\sf A}}\hspace{-0.15cm}\times_{t_{\sf A}} {\sf A}_1$. Then, in one hand
\begin{itemize}
\item[ ]$\hspace{0.38cm} i^{\sf H}_1(g)._{\Psi}(i^{\sf A}_1(a)._{\Psi}i^{\sf A}_1(b))$
\item [ ]$=(id_{\sf A}(t_{\sf H}(g)), g)._{\Psi}((a\bullet \varphi_{\sf A}(id_{\sf H}(s_{\sf A}(a)), b), \phi_{\sf H}(id_{\sf H}(s_{\sf A}(a)),b)\star id_{\sf H}(s_{\sf A}(b)))) $ {\scriptsize (by definitions of $i^{\sf A}$,}
\item[ ]$\hspace{0.38cm}$ {\scriptsize  $i^{\sf H}$ and $._{\Psi}$)}
\item [ ]$=(id_{\sf A}(t_{\sf H}(g)), g)._{\Psi} (a\bullet b,id_{\sf H}(s_{\sf A}(b))) $ {\scriptsize (by $s_{\sf A}(a)=t_{\sf A}(b)$, (c3) of definition \ref{action} and (\ref{P-2}))}
\item [ ]$=(id_{\sf A}(t_{\sf H}(g))\bullet \varphi_{\sf A}(g, a\bullet b),\phi_{\sf H}(g, a\bullet b)\star id_{\sf H}(s_{\sf A}(b))) $ {\scriptsize (by definition of $._{\Psi}$)}
\item [ ]$=(\varphi_{\sf A}(g, a\bullet b),\phi_{\sf H}(g, a\bullet b)) $ {\scriptsize (by (c1) and (d1) of Definition \ref{action} and  (${\rm a2-2}$) and (${\rm a2-1}$) of Definition \ref{quasigroupoid})}
\end{itemize}	
and on the other hand, 
\begin{itemize}
\item[ ]$\hspace{0.38cm}(i^{\sf H}_1(g)._{\Psi}i^{\sf A}_1(a))._{\Psi}i^{\sf A}_1(b)$
\item [ ]$=(id_{\sf A}(t_{\sf H}(g))\bullet \varphi_{\sf A}(g,a),\phi_{\sf H}(g,a)\star id_{\sf H}(s_{\sf A}(a)))._{\Psi}(b,id_{\sf H}(s_{\sf A}(b))) $ {\scriptsize (by definitions of $i^{\sf A}$, $i^{\sf H}$ and $._{\Psi}$)}
\item [ ]$=(\varphi_{\sf A}(g,a),\phi_{\sf H}(g,a))._{\Psi}(b,id_{\sf H}(s_{\sf A}(b))) $ {\scriptsize (by (c1) and  (d1) of Definition \ref{action} and (${\rm a2-1}$) of Definition \ref{quasigroupoid})}
\item [ ]$=(\varphi_{\sf A}(g,a)\bullet \varphi_{\sf A}(\phi_{\sf H}(g,a),b), \phi_{\sf H}(\phi_{\sf H}(g,a),b)\star id_{\sf H}(s_{\sf A}(b))) $ {\scriptsize (by definition of $._{\Psi}$)}
\item [ ]$=(\varphi_{\sf A}(g, a\bullet b),\phi_{\sf H}(g, a\bullet b)) $ {\scriptsize (by (e2) of Definition \ref{mp}, (d1) and (d2) of Definition \ref{action} and   (a2-1) of}
\item[ ]$\hspace{0.38cm}$ {\scriptsize  Definition \ref{quasigroupoid})}
\end{itemize}

Therefore (\ref{HAA}) holds. 
\end{proof}

\begin{lemma}
\label{lem2}
Let  $({\sf A}, {\sf H})$ be a matched pair of quasigroupoids. Let $i^{\sf A}$, $i^{\sf H}$ be the monomorphisms defined in Theorem \ref{sub}.The map $$\theta_{{\sf A}\bowtie {\sf H}}:{\sf A}_1\;_{s_{\sf A}}\hspace{-0.15cm}\times_{t_{\sf H}} {\sf H}_1\rightarrow {\sf A}_1\;_{s_{\sf A}}\hspace{-0.15cm}\times_{t_{\sf H}} {\sf H}_1$$ defined by 
$$\theta_{{\sf A}\bowtie {\sf H}}=._{\Psi}\circ (i^{\sf A}_{1}\times i^{\sf H}_{1})$$ is the identity.
\end{lemma}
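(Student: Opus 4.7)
The plan is a direct unpacking of definitions on an arbitrary element $(a,g)\in {\sf A}_1\;_{s_{\sf A}}\hspace{-0.15cm}\times_{t_{\sf H}} {\sf H}_1$, using only the identity‐preservation properties of the actions $\varphi_{\sf A}$ and $\phi_{\sf H}$. First I would write out
\[
\theta_{{\sf A}\bowtie {\sf H}}(a,g) \;=\; i^{\sf A}_1(a)._{\Psi} i^{\sf H}_1(g) \;=\; (a, id_{\sf H}(s_{\sf A}(a)))._{\Psi}(id_{\sf A}(t_{\sf H}(g)), g),
\]
and verify that the product on the right is defined: the compatibility $s_{{\sf A}\bowtie{\sf H}}(a, id_{\sf H}(s_{\sf A}(a)))=s_{\sf A}(a)=t_{\sf H}(g)=t_{{\sf A}\bowtie{\sf H}}(id_{\sf A}(t_{\sf H}(g)), g)$ holds precisely because $(a,g)$ lies in the fiber product.

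Next, by the definition of $._{\Psi}$,
\[
\theta_{{\sf A}\bowtie {\sf H}}(a,g) \;=\; \bigl(a\bullet \varphi_{\sf A}(id_{\sf H}(s_{\sf A}(a)), id_{\sf A}(t_{\sf H}(g))),\; \phi_{\sf H}(id_{\sf H}(s_{\sf A}(a)), id_{\sf A}(t_{\sf H}(g)))\star g\bigr).
\]
Using $s_{\sf A}(a)=t_{\sf H}(g)$, both arguments of $\varphi_{\sf A}$ and $\phi_{\sf H}$ become identities at the same object $s_{\sf A}(a)=t_{\sf H}(g)$. I would then apply condition (c3) of Definition \ref{action} to the element $id_{\sf A}(s_{\sf A}(a))\in{\sf A}_1$ (whose target is $s_{\sf A}(a)$) to obtain $\varphi_{\sf A}(id_{\sf H}(s_{\sf A}(a)), id_{\sf A}(s_{\sf A}(a)))= id_{\sf A}(s_{\sf A}(a))$, and identity (\ref{P-2}) from Proposition \ref{properties} to obtain $\phi_{\sf H}(id_{\sf H}(s_{\sf A}(a)), id_{\sf A}(s_{\sf A}(a)))= id_{\sf H}(s_{\sf A}(a))=id_{\sf H}(t_{\sf H}(g))$.

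Plugging these back in, the right hand side becomes
\[
(a\bullet id_{\sf A}(s_{\sf A}(a)),\; id_{\sf H}(t_{\sf H}(g))\star g) \;=\; (a,g)
\]
by two applications of (${\rm a2\text{-}1}$) of Definition \ref{quasigroupoid}. Since $(a,g)$ was arbitrary, this finishes the proof. No genuine obstacle is expected: the verification is mechanical once one checks that all the fiber-product conditions required to compose the identities are automatic from $s_{\sf A}(a)=t_{\sf H}(g)$. The only mildly subtle point is recognising that (c3) and (\ref{P-2}) apply with the correct base point, which is guaranteed by the common base assumption on $({\sf A},{\sf H})$.
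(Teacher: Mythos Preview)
Your proof is correct and follows essentially the same approach as the paper: unpack the definitions of $i^{\sf A}_1$, $i^{\sf H}_1$ and $._{\Psi}$, use $s_{\sf A}(a)=t_{\sf H}(g)$ to identify the base points, and reduce to the identity axioms. The only cosmetic difference is that the paper invokes (d3) of Definition~\ref{action} directly for the $\phi_{\sf H}$ term, whereas you cite (\ref{P-2}); both yield the same identity-on-identity computation.
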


\begin{proof}
First note that, if $(a,g)\in{\sf A}_1\;_{s_{\sf A}}\hspace{-0.15cm}\times_{t_{\sf H}} {\sf H}_1$, we have that 
$$s_{{\sf A}\bowtie {\sf H}} (i^{\sf A}_{1}(a))=s_{{\sf A}\bowtie {\sf H}} (a, id_{\sf H}(s_{\sf A}(a)))=s_{\sf A}(a)=t_{\sf H}(g)=t_{{\sf A}\bowtie {\sf H}} (id_{\sf A}(t_{\sf H}(g)), g)=t_{{\sf A}\bowtie {\sf H}} (i^{\sf H}_{1}(g)).$$

Then, there exists $\theta_{{\sf A}\bowtie {\sf H}}(a,g)=i^{\sf A}_{1}(a)._{\Psi}i^{\sf H}_{1}(g)$ and, by (c3) and (d3) of Definition \ref{action} and (a2-1) of Definition \ref{quasigroupoid}, we have that 
\begin{align*}
\theta_{{\sf A}\bowtie {\sf H}}(a,g)& =(a, id_{\sf H}(s_{\sf A}(a)))._{\Psi} ( id_{\sf A}(t_{\sf H}(g)), g)\\
\;	& =(a\bullet \varphi_{\sf A}( id_{\sf H}(s_{\sf A}(a)),  id_{\sf A}(t_{\sf H}(g))), \phi_{\sf H}( id_{\sf H}(s_{\sf A}(a)),  id_{\sf A}(t_{\sf H}(g)))\star g)\\
\;	& =(a\bullet \varphi_{\sf A}( id_{\sf H}(s_{\sf A}(a)),  id_{\sf A}(s_{\sf A}(a))), \phi_{\sf H}( id_{\sf H}(t_{\sf H}(g)),  id_{\sf A}(t_{\sf H}(g)))\star g)\\
\;	& = (a,g).
\end{align*}
\end{proof}

\begin{definition}
\label{exact-f}
{\rm Let {\sf A}, {\sf H} and {\sf B} be quasigroupoids with the same base. Assume that {\sf A} and  {\sf H} are subquasigroupoids of  {\sf B} with associated monomorphisms $i^{\sf A}: {\sf A}\rightarrow {\sf B}$ and $i^{\sf H}: {\sf H}\rightarrow {\sf B}$  satisfying that $i^{\sf A}_{0}=i^{\sf H}_{0}=id_{{\sf A}_{0}}$. Let $\diamond$ be the product defined in {\sf B}. Then we will say that $[{\sf A}, {\sf H}]$ is an exact factorization of {\sf B} if :
\begin{itemize}
\item[(i)]  For all $(g,a)\in {\sf H}_1\;_{s_{\sf H}}\hspace{-0.15cm}\times_{t_{\sf A}} {\sf A}_1$ and $(a,b)\in {\sf A}_1\;_{s_{\sf A}}\hspace{-0.15cm}\times_{t_{\sf A}} {\sf A}_1$, the equality (\ref{HAA}) holds for $._{\Psi}=\diamond$.
\item[(ii)]  For all $(g,h)\in {\sf H}_1\;_{s_{\sf H}}\hspace{-0.15cm}\times_{t_{\sf H}} {\sf H}_1$ and $(h,a)\in {\sf H}_1\;_{s_{\sf H}}\hspace{-0.15cm}\times_{t_{\sf A}} {\sf A}_1$, the equality (\ref{HHA}) holds for $._{\Psi}=\diamond$.
\item[(iii)] For all $(h,a)\in {\sf H}_1\;_{s_{\sf H}}\hspace{-0.15cm}\times_{t_{\sf A}} {\sf A}_1$ and $(a,f)\in {\sf A}_1\;_{s_{\sf A}}\hspace{-0.15cm}\times_{t_{\sf H}} {\sf H}_1$, the equality (\ref{HAH}) holds for $._{\Psi}=\diamond$.
\item[(iv)] For all $(c,h)\in {\sf A}_1\;_{s_{\sf A}}\hspace{-0.15cm}\times_{t_{\sf H}} {\sf A}_1$ and $(h,a)\in {\sf H}_1\;_{s_{\sf H}}\hspace{-0.15cm}\times_{t_{\sf A}} {\sf A}_1$, the equality (\ref{AHA}) holds for $._{\Psi}=\diamond$.
\item[(v)] For all  $(a,b)\in {\sf A}_1\;_{s_{\sf A}}\hspace{-0.15cm}\times_{t_{\sf A}} {\sf A}_1$ and $(b,g)\in {\sf A}_1\;_{s_{\sf A}}\hspace{-0.15cm}\times_{t_{\sf H}} {\sf H}_1$, the equality (\ref{AAH}) holds for $._{\Psi}=\diamond$.
\item[(vi)] For all $(a,g)\in {\sf A}_1\;_{s_{\sf A}}\hspace{-0.15cm}\times_{t_{\sf H}} {\sf H}_1$ and $(g,h)\in {\sf H}_1\;_{s_{\sf H}}\hspace{-0.15cm}\times_{t_{\sf H}} {\sf H}_1$, the equality (\ref{AHH}) holds for $._{\Psi}=\diamond$.
\item[(vii)] The map $\theta_{\sf B}=\diamond\circ (i^{\sf A}_1\times i^{\sf H}_1): {\sf A}_1\;_{s_{\sf A}}\hspace{-0.15cm}\times_{t_{\sf H}} {\sf H}_1\rightarrow {\sf B}_{1}$ is a bijection.
\end{itemize}
}
\end{definition}

\begin{example}
Let  $({\sf A}, {\sf H})$ be a matched pair of quasigroupoids. By Theorem \ref{sub} and Lemmas \ref{lem1} and \ref{lem2}, we have that $[{\sf A}, {\sf H}]$ is an exact factorization of ${\sf A}\bowtie {\sf H}$. Then any matched pair of quasigroupoids give rise to an exact factorization. 

In the following theorem we will prove that any exact factorization $[{\sf A}, {\sf H}]$ of a quasigropupoid  {\sf B} induces an example of matched pair of quasigroupoids.
\end{example}

\begin{theorem}
\label{main} Let  $[{\sf A}, {\sf H}]$ be an exact factorization of a quasigropupoid  {\sf B}. Then, there exists  a matched pair of quasigroupoids   $({\sf A}, {\sf H})$ and an isomorphism of quasigroupoids  between ${\sf A}\bowtie {\sf H}$ and ${\sf B}$. 
\end{theorem}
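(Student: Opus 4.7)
The plan is to reverse-engineer the matched-pair data from the bijection \(\theta_{\sf B}\). For each \((h,a)\in {\sf H}_1\;_{s_{\sf H}}\hspace{-0.1cm}\times_{t_{\sf A}} {\sf A}_1\) the product \(i^{\sf H}_1(h)\diamond i^{\sf A}_1(a)\) is defined in \({\sf B}\), so by the bijectivity of \(\theta_{\sf B}\) there is a unique pair \((u,k)\in {\sf A}_1\;_{s_{\sf A}}\hspace{-0.1cm}\times_{t_{\sf H}} {\sf H}_1\) with \(i^{\sf A}_1(u)\diamond i^{\sf H}_1(k)=i^{\sf H}_1(h)\diamond i^{\sf A}_1(a)\). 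I set \(\varphi_{\sf A}(h,a):=u\) and \(\phi_{\sf H}(h,a):=k\). Condition (e1) is then built into the construction, and (c1), (d1) follow by applying \(s_{\sf B}\) and \(t_{\sf B}\) to the defining equality and using (b1), (b2) of Definition \ref{mor-loop} for \(i^{\sf A}\) and \(i^{\sf H}\).

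Next I would verify that \(\varphi_{\sf A}\) is a left action, \(\phi_{\sf H}\) a right action, and that together they satisfy (e2), (e3); the method is uniform. For (c3) and (d3), write \(i^{\sf H}_1(id_{\sf H}(t_{\sf A}(a)))\diamond i^{\sf A}_1(a)=id_{\sf B}(t_{\sf A}(a))\diamond i^{\sf A}_1(a)=i^{\sf A}_1(a)=i^{\sf A}_1(a)\diamond id_{\sf B}(s_{\sf A}(a))=i^{\sf A}_1(a)\diamond i^{\sf H}_1(id_{\sf H}(s_{\sf A}(a)))\) and invoke injectivity of \(\theta_{\sf B}\). For (c2), expand \(i^{\sf H}_1(g\star h)\diamond i^{\sf A}_1(a)=(i^{\sf H}_1(g)\diamond i^{\sf H}_1(h))\diamond i^{\sf A}_1(a)\), then apply (ii) = (\ref{HHA}) to bring it to \(i^{\sf H}_1(g)\diamond(i^{\sf H}_1(h)\diamond i^{\sf A}_1(a))\), substitute the definition of \(\varphi_{\sf A},\phi_{\sf H}\) twice, reassociate via (iv) = (\ref{AHA}), and compare with the definition of \(\varphi_{\sf A}(g,\varphi_{\sf A}(h,a))\) and \(\phi_{\sf H}(g,\varphi_{\sf A}(h,a))\); injectivity of \(\theta_{\sf B}\) yields both (c2) and the first half of (e3). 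The remaining axioms (d2), (e2), and the second half of (e3) follow symmetrically, each consuming one of the mixed associativities (i), (iii), (v), (vi).

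Finally, I would define \(\Gamma=(id_{{\sf A}_0},\theta_{\sf B})\colon {\sf A}\bowtie {\sf H}\to {\sf B}\). Bijectivity is given by hypothesis (vii); (b1), (b2), (b3) are straightforward from the definitions of \(s_{{\sf A}\bowtie {\sf H}}\), \(t_{{\sf A}\bowtie {\sf H}}\), \(id_{{\sf A}\bowtie {\sf H}}\). For multiplicativity (b4), the key computation is
\begin{align*}
\theta_{\sf B}(a,g)\diamond \theta_{\sf B}(b,h)
&= (i^{\sf A}_1(a)\diamond i^{\sf H}_1(g))\diamond (i^{\sf A}_1(b)\diamond i^{\sf H}_1(h))\\
&= i^{\sf A}_1(a)\diamond (i^{\sf H}_1(g)\diamond i^{\sf A}_1(b))\diamond i^{\sf H}_1(h)\\
&= i^{\sf A}_1(a)\diamond \bigl(i^{\sf A}_1(\varphi_{\sf A}(g,b))\diamond i^{\sf H}_1(\phi_{\sf H}(g,b))\bigr)\diamond i^{\sf H}_1(h)\\
&= i^{\sf A}_1(a\bullet \varphi_{\sf A}(g,b))\diamond i^{\sf H}_1(\phi_{\sf H}(g,b)\star h)\\
&= \theta_{\sf B}\bigl((a,g)._{\Psi}(b,h)\bigr),
\end{align*}
using the mixed associativities (v), (vi) to split the outer brackets, the definition of \(\varphi_{\sf A},\phi_{\sf H}\), condition (iv), and finally (b4) for \(i^{\sf A}\) and \(i^{\sf H}\). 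The main obstacle will be bookkeeping: at every intermediate step one must check that source and target match so that the products are in fact defined; once the matching data is tracked through, each axiom reduces to a single application of the injectivity of \(\theta_{\sf B}\) to a string of mixed associativities.
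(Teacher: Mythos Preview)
Your approach coincides with the paper's: define $\varphi_{\sf A}$ and $\phi_{\sf H}$ from the unique $\theta_{\sf B}$-preimage of $i^{\sf H}_1(h)\diamond i^{\sf A}_1(a)$, verify the matched-pair axioms by chains of the mixed associativities (i)--(vi) together with injectivity of $\theta_{\sf B}$, and then take $\Gamma=(id_{{\sf A}_0},\theta_{\sf B})$ as the isomorphism. One small slip: in your chain for (c2)/(e3), after the first substitution the configuration is $i^{\sf H}_1(g)\diamond\bigl(i^{\sf A}_1(\cdot)\diamond i^{\sf H}_1(\cdot)\bigr)$, so the reassociation needed is (iii)=(\ref{HAH}) (and then (vi)=(\ref{AHH}) after the second substitution), not (iv)=(\ref{AHA}); the paper uses exactly (\ref{HHA}), (\ref{HAH}), (\ref{AHH}) in that computation.
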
	

\begin{proof} Let $(h,a)\in {\sf H}_1\;_{s_{\sf H}}\hspace{-0.15cm}\times_{t_{\sf A}} {\sf A}_1$. Then, $(i^{\sf H}_1(h),i^{\sf A}_1(a))\in {\sf B}_1\;_{s_{\sf B}}\hspace{-0.15cm}\times_{t_{\sf B}} {\sf B}_1$, because, by  (b1) and (b2) of Definition \ref{mor-loop}, we have that following:
$$s_{\sf B}(i^{\sf H}_{1}(h))=s_{\sf H}(h)=t_{\sf A}(a)=t_{\sf B}(i^{\sf A}_{1}(a)).$$

Therefore, thanks to the existence of the isomorphism $\theta_{B}$, we can guarantee the existence of a unique pair $(a^h, h^a)\in  {\sf A}_1\;_{s_{\sf A}}\hspace{-0.15cm}\times_{t_{\sf H}} {\sf H}_1$ such that 
\begin{equation}
\label{isopro}
i^{\sf H}_{1}(h)\diamond i^{\sf A}_{1}(a)=i^{\sf A}_{1}(a^h)\diamond i^{\sf H}_{1}(h^a).
\end{equation}

This last equality 	allows to define the  maps  
$$\varphi_{\sf A} :{\sf H}_1\;_{s_{\sf H}}\hspace{-0.15cm}\times_{t_{\sf A}} {\sf A}_1\rightarrow {\sf A}_1,\;\;\;\; \varphi_{\sf A}(h,a)= a^h,$$ 
and 
$$\phi_{\sf H} :{\sf H}_1\;_{s_{\sf H}}\hspace{-0.15cm}\times_{t_{\sf A}} {\sf A}_1\rightarrow {\sf A}_1,\;\;\;\; \phi_{\sf H}(h,a)= h^a.$$ 

The first map is a left action of $H$ on $A$. Indeed: First note that, by (b2) of Definition \ref{mor-loop} and (\ref{isopro}), the following identities 
$$t_{\sf A}(\varphi_{\sf A}(h,a))=t_{\sf A}(a^h)=t_{\sf B}(i^{\sf A}_{1}(a^h))=t_{\sf B}(i^{\sf H}_{1}(h))=t_{\sf H}(h)$$
hold and, as a consequence, we obtain (c1) of Definition \ref{action}. The proof of  (d1) of Definition \ref{action} follows a similar pattern and we left the details to the reader. Also, by   (\ref{isopro}), (\ref{HHA}),   (\ref{HAH}),  (\ref{AHH}) and (b4) of Definition \ref{mor-loop}, we have the identities:

\begin{align*}
	i^{\sf A}_{1}(a^{g\star h})\diamond i^{\sf H}_{1}((g\star h)^a)   & = 	i^{\sf H}_{1}(g\star h)\diamond i^{\sf A}_{1}(a)   \\
	\;	& = (i^{\sf H}_{1}(g)\diamond i^{\sf H}_{1}(h))\diamond i^{\sf A}_{1}(a)  \\
	\;	& =  i^{\sf H}_{1}(g)\diamond ( i^{\sf H}_{1}(h)\diamond i^{\sf A}_{1}(a) )\\
	\;	& = i^{\sf H}_{1}(g)\diamond (  i^{\sf A}_{1}(a^h)\diamond i^{\sf H}_{1}(h^a)) \\
	\;	& = ( i^{\sf H}_{1}(g)\diamond  i^{\sf A}_{1}(a^h))\diamond i^{\sf H}_{1}(h^a) \\
	\;	& = ( i^{\sf A}_{1}((a^h)^{g})\diamond  i^{\sf H}_{1}(g^{a^h}))\diamond i^{\sf H}_{1}(h^a) \\
	\;	& =  i^{\sf A}_{1}((a^h)^{g})\diamond  (i^{\sf H}_{1}(g^{a^h})\diamond i^{\sf H}_{1}(h^a) )\\
	\;	& =  i^{\sf A}_{1}((a^h)^{g})\diamond  (i^{\sf H}_{1}(g^{a^h}\star h^a)).\\
	\end{align*}

As a consequence, 
$$\varphi_{\sf A}(g\star h, a)=a^{g\star h}=(a^h)^{g}=\varphi_{\sf A}(g, \varphi_{\sf A}(h,a)), $$
i.e., (c2) of Definition \ref{action} holds and 
$$\phi_{\sf H}(g\star h, a)=(g\star h)^a=g^{a^h}\star h^a=\phi_{\sf H}(g, \varphi_{\sf A}(h,a))\star \phi_{\sf H}(h,a),$$
i.e. (e3) of Definition \ref{mp} holds.  Similarly, we can prove that (d2) of Definition \ref{action} and (e2) of Definition \ref{mp} hold. 

On the other hand,  $\varphi_{\sf A}(id_{\sf H}(t_{\sf A}(a)), a)=a$  because 
$$i^{\sf H}_{1}(id_{\sf H}(t_{\sf A}(a)))\diamond i^{\sf A}_{1}(a)= i^{\sf A}_{1}(a)=i^{\sf A}_{1}(a)\diamond i^{\sf H}_{1}(id_{\sf H}(s_{\sf A}(a))).$$
and, similarly, we can prove that   $\phi_{\sf H}(h, id_{\sf A}(s_{\sf H}(h)))=h$. Also,  if $(h,a)\in {\sf H}_1\;_{s_{\sf H}}\hspace{-0.15cm}\times_{t_{\sf A}} {\sf A}_1$, 
$$s_{\sf A}( \varphi_{\sf A}(h,a))=s_{\sf A}( a^h)=s_{\sf B}( i^{\sf A}_{1}(a^h))=t_{\sf B}( i^{\sf H}_{1}(h^a))
=t_{\sf H}( h^a)=t_{\sf H}( \phi_{\sf A}(h,a)).$$

Therefore, with the previous actions,  $({\sf A}, {\sf H})$  is a matched pair of quasigroupoids. Finally, using the definition of exact factorization, it is easy to prove that  the quasigroupoid isomorphism is defined by 
$$\Gamma=(\Gamma_{0}=id_{{\sf A}_{0}}, \Gamma_{1}=\theta_{\sf B}).$$
\end{proof}

Theorems \ref{match-1} and \ref{main}  extend to the non-associative context similar results for groupoids. In the associative context the details of the constructions can be found in \cite[Theorems 2.10 and 2.15]{Mac}. As was pointed by K. Mackenzie these two results are a  generalization of \cite[3.8]{LW}.

\begin{definition}
{\rm Let $({\sf A}, {\sf H})$ and  $({\sf A}^{\prime}, {\sf H}^{\prime})$ be a pair of matched pair of quasigroupoids. A morphism $({\sf A}, {\sf H})\rightarrow ({\sf A}^{\prime}, {\sf H}^{\prime})$  of matched pairs of quasigroupoids is a pair $(\Gamma, \Omega)$ of morphisms of quasigroupoids $\Gamma: {\sf A}\rightarrow {\sf A}^{\prime}$ and $\Omega: {\sf H}\rightarrow {\sf H}^{\prime}$ such that 
\begin{align*}
\Gamma_{1}\circ \varphi_{\sf A}	  & =   \varphi_{\sf A^{\prime}}\circ (\Omega_{1}\times \Gamma_{1}),\\
\Omega_{1}\circ \phi_{\sf H}	  & =   \phi_{\sf H^{\prime}}\circ (\Omega_{1}\times \Gamma_{1}).
\end{align*}

If  $({\sf A}^{\prime\prime }, {\sf H}^{\prime\prime})$ is another matched pair of quasigroupoids and $(\Gamma^{\prime}, \Omega^{\prime}):({\sf A}^{\prime}, {\sf H}^{\prime})\rightarrow ({\sf A}^{\prime\prime }, {\sf H}^{\prime\prime})$ is a morphism of matched pairs, the pair $(\Gamma^{\prime}\circ \Gamma, \Omega^{\prime}\circ \Omega)$ is a morphisms of matched pairs. Therefore, matched pairs of quasigropoids form a category that we will denote by  {\sf Mp(QGPD)}. Clearly, if we work with matched pairs of quasigroupoids with the same base $X$ we have a subcategory of  {\sf Mp(QGPD)} denoted by 
{\sf Mp$_{X}$(QGPD)}.
}
	
\end{definition}

\section{Weak Hopf  quasigroups and quasigroupoids}

Throughout this section, ${\mathbb K}$ will be a field. With $id_{V}$ we will denote the identity map for all  $V \in {\mathbb K}$-{\sf Vect} and with $\otimes$ the tensor product over ${\mathbb K}$.

By a unitary magma we will understand a ${\mathbb K}$-vector space $A$ with a unital product. Then we have a multiplication, which is a linear map  $\mu_{A}:A\otimes  A\rightarrow A$, and a unit which can be seen as a linear map $\eta_{A}:{\mathbb K}\rightarrow A$. In this case with $1$ we will denote the unit of $A$, i.e., $1=\eta_{A}(1_{\mathbb K})$, and with $ab$ the image by $\mu_{A}$ of a rank one tensor $a\otimes  b$. Also, by using  the inverse of the scalar multiplication from ${\mathbb K}\otimes   A$ to $A$,  we get the equality $\mu_{A}(1\otimes  a)=a=id_{A}(a)$  (similarly, $\mu_{A}(a\otimes  1)=id_{A}(a)=a$ is obtained by  the inverse of the scalar multiplication from $A\otimes  {\mathbb K}$ to $A$). A map of unitary magmas $f:A\rightarrow B$ is a linear map such that $f(1)=1$ and $f(ab)=f(a)f(b)$.

Let $A$ be a unitary magma. If the product is associative we have an algebra in ${\mathbb K}$-{\sf Vect}. Dually, a coalgebra structure in a vector space $C$ over ${\mathbb K}$ is provided by a coassociative comultiplication, which is a linear map $\delta_{C}:C\rightarrow C\otimes  C$, and a counit linear map $\varepsilon_{C}:C\rightarrow {\mathbb K}$.  For a coalgebra $C$, we will use the Heynenman-Sweedler's notation $\delta_{C}(c)=
c_{(1)}\otimes  c_{(2)}$ with suppressed summation sign. Then $C$ is a coalgebra if the equalities
\begin{align}
	\label{coalg1}
	c_{(1)(1)}\otimes  c_{(1)(2)}\otimes  c_{(2)}&=c_{(1)}\otimes  c_{(2)(1)}\otimes  c_{(2)(2)},\\
	\label{coalg2}
	\varepsilon_{C}(c_{(1)})c_{(2)}&= c=c_{(1)}\varepsilon_{C}(c_{(2)}),  
\end{align}
hold for all $c\in C$. So by the Heyenman-Sweedler's convention we write $c_{(1)}\otimes  c_{(2)}\otimes  c_{(3)}$ for the sums of (\ref{coalg1}). Note that in  (\ref{coalg2}) we identify $\varepsilon_{C}(c_{(1)})\otimes  c_{(2)}$ with the  element of $C$ given by $\varepsilon_{C}(c_{(1)})c_{(2)}$ using the  scalar multiplication from ${\mathbb K}\otimes  C$ to $C$ (similarly, $c_{(1)}\otimes  \varepsilon_{C}(c_{(2)})$ is identified with $c_{(1)} \varepsilon_{C}(c_{(2)})$ by the scalar multiplication from $C\otimes  {\mathbb K}$ to $C$).  A morphism of coalgebras $f:C\rightarrow D$ is a linear map such that  $\varepsilon_{D}(f(c))=\varepsilon_{C}(c)$ and $f(c)_{(1)}\otimes  f(c)_{(2)}=f(c_{(1)})\otimes  f(c_{(2)}).$

If $A$ is a unitary magma and $C$ is a coalgebra, for two linear maps $f,g:C\rightarrow A$, we define the convolution product of $f$ and $g$, denoted by $f\ast g:C\rightarrow A$, as $(f\ast g)(c)=f(c_{(1)}) g(c_{(2)}).$

If $S$ is a set, with ${\mathbb K}[S]$ we will denote the free ${\mathbb K}$-vector space on $S$, i.e.,  
$${\mathbb K}[S]=\displaystyle \bigoplus_{s\in S}{\mathbb K}s.$$

This vector space has a coalgebra structure determined by the coproduct $
\delta_{{\mathbb K}[S]}(s)=s\otimes  s$ and the counit $ \varepsilon_{{\mathbb K}[S]}(s)=1_{\mathbb K}.$

Now we give the definition of weak Hopf quasigroup in ${\mathbb K}$-{\sf Vect}.

\begin{definition}
\label{whq}
{\rm 
A weak Hopf quasigroup in ${\mathbb K}$-{\sf Vect} is a ${\mathbb K}$-vector space $D$ such that it is a unitary magma with product $\mu_{D}(h\otimes  g)=hg$ and unit $1$ and  a coalgebra with coproduct $\delta_{D}$ and counit $\varepsilon_{D}$,  satisfying the following conditions for all $h,k,l \in D$: 
\begin{itemize}
\item[(d1)] $(hk)_{(1)}\otimes (hk)_{(2)}=h_{(1)}k_{(1)}\otimes h_{(2)}k_{(2)}$.
\item[(d2)] $\varepsilon_{D}((hk)l)=\varepsilon_{D}(h(kl))=\varepsilon_{D}(hk_{(1)})\varepsilon_{D}(k_{(2)}l)=\varepsilon_{D}(hk_{(2)})\varepsilon_{D}(k_{(1)}l)$.
\item[(d3)] $1_{(1)}\otimes 1_{(2)}\otimes 1_{(3)}=1_{(1)}\otimes 
1_{(2)}1_{(1^{\prime})}\otimes 1_{(2^{\prime})}=1_{(1)}\otimes 
1_{(1^{\prime})}1_{(2)}\otimes 1_{(2^{\prime})}$.
\item[(d4)] There exists a linear map $\lambda_{D}:D\rightarrow D$ (called the antipode of $D$) such that, if $\Pi_{D}^{L}: D\rightarrow D$ is the ${\mathbb K}$-linear map defined by $\Pi_{D}^{L}=id_{D}\ast \lambda_{D}$ (target morphism) and $\Pi_{D}^{R}: D\rightarrow D$ is the ${\mathbb K}$-linear map defined by $\Pi_{D}^{R}=\lambda_{D}\ast id_{D}$ (source morphism),
\begin{itemize}
\item[(${\rm d4 - 1}$)] $\Pi_{D}^{L}(h)=\varepsilon_{D}(1_{(1)}h)1_{(2)}$.
\item[(${\rm d4-2}$)] $\Pi_{D}^{R}(h)=\varepsilon_{D}(h1_{(2)})1_{(1)}$.
\item[(${\rm d4-3}$)] $\lambda_{D}=\lambda_{D}\ast \Pi_{D}^{L}=\Pi_{D}^{R}\ast \lambda_{D}$.
\item[(${\rm d4-4}$)] $\lambda_{D}(h_{(1)})(h_{(2)}k)=\Pi_{D}^{R}(h)k$.
\item[(${\rm d4-5}$)]$h_{(1)}(\lambda_{D}(h_{(2)})k)=\Pi_{D}^{L}(h)k$.
\item[(${\rm d4-6}$)]$(hk_{(1)})\lambda_{D}(k_{(2)})=h\Pi_{D}^{L}(k)$.
\item[(${\rm d4-7}$)]$(h\lambda_{D}(k_{(1)}))k_{(2)}=h\Pi_{D}^{R}(k)$.	
\end{itemize}
\end{itemize}

Using the composition of morphisms, the previous identities can be expressed as: 
\begin{align*}
{\rm (d1)} & \Leftrightarrow \;   \delta_{D}\co \mu_{D}=(\mu_{D}\ot \mu_{D})\co \delta_{D\ot D},\\
{\rm (d2)} & \Leftrightarrow \;   \varepsilon_{D}\co \mu_{D}\co (\mu_{D}\ot
id_{D})=\varepsilon_{D}\co \mu_{D}\co (id_{D}\ot \mu_{D}) \\ 
\;\;\;\;\;\;\;\;\;\; & \;\;\;\;\;= ((\varepsilon_{D}\co \mu_{D})\ot (\varepsilon_{D}\co
\mu_{D}))\co (id_{D}\ot \delta_{D}\ot id_{D}) \\
\;\;\;\;\;\;\;\;\;\; & \;\;\;\;\;=((\varepsilon_{D}\co \mu_{D})\ot (\varepsilon_{D}\co
\mu_{D}))\co (id_{D}\ot (c_{D,D}\co\delta_{D})\ot id_{D}), \\
{\rm (d3)} & \Leftrightarrow \;  (\delta_{D}\ot id_{D})\co \delta_{D}\co \eta_{D}\\
\;\;\;\;\;\;\;\;\;\; & \;\;\;\;\;=(id_{D}\ot
\mu_{D}\ot id_{D})\co ((\delta_{D}\co \eta_{D}) \ot (\delta_{D}\co
\eta_{D}))\\
\;\;\;\;\;\;\;\;\;\; & \;\;\;\;\;=(id_{D}\ot (\mu_{D}\co c_{D,D})\ot
id_{D})\co ((\delta_{D}\co \eta_{D}) \ot (\delta_{D}\co \eta_{D})),\\
{\rm (d4-1)} & \Leftrightarrow \;  \Pi_{D}^{L}=((\varepsilon_{D}\co
\mu_{D})\ot id_{D})\co (id_{D}\ot c_{D,D})\co ((\delta_{D}\co \eta_{D})\ot
id_{D}),\\
{\rm (d4-2)} &\Leftrightarrow \; \Pi_{D}^{R}=(id_{D}\ot(\varepsilon_{D}\co \mu_{D}))\co (c_{D,D}\ot id_{D})\co (id_{D}\ot (\delta_{D}\co \eta_{D})) ,\\
{\rm (d4-4)} & \Leftrightarrow \; \mu_D\circ (\lambda_D\ot \mu_D)\circ (\delta_D\ot id_{D})=\mu_{D}\co (\Pi_{D}^{R}\ot id_{D}) ,\\
{\rm (d4-5)} & \Leftrightarrow \; \mu_D\circ (id_{D}\ot \mu_D)\circ (id_{D}\ot \lambda_D\ot
id_{D})\circ (\delta_D\ot id_{D})=\mu_{D}\co (\Pi_{D}^{L}\ot id_{D}) ,\\
{\rm (d4-6)} & \Leftrightarrow \; \mu_D\circ(\mu_D\ot \lambda_D)\circ (id_{D}\ot
\delta_D)=\mu_{D}\co (id_{D}\ot \Pi_{D}^{L}) ,\\
{\rm (d4-7)} &  \Leftrightarrow \; \mu_D\circ (\mu_D\ot id_{D})\circ (id_{D}\ot \lambda_D\ot id_{D})\circ (id_{D}\ot \delta_D)=\mu_{D}\co (id_{D}\ot \Pi_{D}^{R})
\end{align*}
where $c_{D,D}$ is the usual twist in ${\mathbb K}$-{\sf vect}, $\eta_{D}$ the unit map, $\mu_{D\otimes D}=(\mu_{D}\otimes \mu_{D})\circ (id_{D}\otimes c_{D,D}\otimes id_{D})$ and $\delta_{D\otimes D}= (id_{D}\otimes c_{D,D}\otimes id_{D})\circ (\delta_{D}\otimes \delta_{D})$.

If  $\varepsilon_D$ and $\delta_D$ are  morphisms of unital magmas, $\Pi_{D}^{L}=\Pi_{D}^{R}=\eta_{D}\ot \varepsilon_{D}$ and, as a consequence, we have  the notion of Hopf quasigroup defined  by Klim and Majid in \cite{KM}. Thus, Hopf quasigroups are examples of weak Hopf quasigroups. 

We will say that $D$ is commutative if $hk=kh$  for all $h,k\in D$, and cocommutative if $k_{(1)}\otimes  k_{(2)}=k_{(2)}\otimes  k_{(1)}$  for all $k \in D$.
}
\end{definition}

\begin{example}
\label{ex-k}
{\rm 
It is well-known that group algebras provide examples of cocommutative Hopf algebras. Also, with finite groupoids it is possible to obtain weak Hopf algebras using the groupoid algebra. In a non-associative setting, J. Klim and S. Majid proved in \cite{KM} that the quasigroup algebra of a quasigroup is a cocommutative Hopf quasigroup. Moreover, as was proved in \cite[Proposition 3.2]{JA21}, any finite quasigroupoid induces  a cocommutative weak Hopf quasigroup, where the base vector space is the quasigroupoid magma. The structure of this kind of weak Hopf quasigroups is the following: Let ${\sf B}=({\sf B}_0,{\sf B}_1)$ be a finite quasigroupoid. The quasigroupoid magma ${\mathbb K}[{\sf B}]$ defined by ${\mathbb K}[{\sf B}]={\mathbb K}[{\sf B}_{1}]$ is a cocommutative weak Hopf quasigroup with ${\mathbb K}$-linear extensions of the unit 
$\displaystyle 1_{{\mathbb K}[{\sf B}]}=\sum_{x\in {\sf B}_0}id_{\sf B}(x)$, product 
$$\mu_{{\mathbb K}[{\sf B}]}(a\otimes b) =\left\{ \begin{array}{l}
a\bullet b \;\; {\rm if} \;\; (a, b)\in {\sf B}_1\;_{s_{\sf B}}\hspace{-0.15cm}\times_{t_{\sf B}} {\sf B}_1\\
\\
0 \;\; {\rm otherwise},
\end{array}\right.
$$
counit $\varepsilon_{{\mathbb K}[{\sf B}]}(b)=1_{\mathbb K}$, coproduct $\delta_{{\mathbb K}[{\sf B}]}(b)=b\otimes b$ and antipode $\lambda_{{\mathbb K}[{\sf B}]}(b)=\lambda_{\sf B}(b)$ on the basis elements.  Note that in this case $\Pi_{{\mathbb K}[{\sf B}]}^{L}(b)=id_{\sf B}(t_{\sf B}(b))$ and $\Pi_{{\mathbb K}[{\sf B}]}^{R}(b)=id_{\sf B}(s_{\sf B}(b))$.
}
\end{example}

\begin{apart}
\label{whqp}
{\rm 
If $D$ is a weak Hopf quasigroup, the  target and source maps are idempotent \cite[Proposition 3.4]{Asian} and, by \cite[Propositions 3.1, 3.2]{Asian}, the following equalities  
$$
	\Pi_{D}^{L}\ast id_{D} =id_{D}\ast  \Pi_{D}^{R}=id_{D},
$$
$$
	\Pi_{D}^{L}\circ \eta_{D}=\eta_{D}=\Pi_{D}^{R}\circ\eta_{D}, \;\;\;\varepsilon_{D}\circ \Pi_{D}^{L}=\varepsilon_{D}=\varepsilon_{D}\circ \Pi_{D}^{R},
$$
hold. Moreover,  the antipode of a  weak Hopf quasigroup $D$ is unique, $\lambda_{D}\circ \eta_{D}=\eta_{D}$,  $\varepsilon_{D}\circ\lambda_{D}=\varepsilon_{D}$ and is antimultiplicative and anticomultiplicative, i.e.,
\begin{align*}
	\lambda_{D}(hg) & =\lambda_{D}(g)\lambda_{D}(h),\\
	\lambda_{D}(h)_{(1)}\otimes  \lambda_{D}(h)_{(2)}& =\lambda_{D}(h_{(2)})\otimes  \lambda_{D}(h_{(1)})
\end{align*}
(see \cite[Theorem 3.19]{Asian}). Moreover, for the morphism target and source we have that 
$$
	\Pi_{D}^{L}=\Pi_{D}^{L}\ast \Pi_{D}^{L}, \;\;\; \Pi_{D}^{R}=\Pi_{D}^{R}\ast \Pi_{D}^{R},
$$
hold (see \cite[Proposition 2.3]{MED}).

On the other hand, if $D_{L}$ is the subspace defined by the image of the target morphism and $h\in D_{L}$, by  \cite[Proposition 2.4]{JPAA} we know that
$$
(hk)l=h(kl), \;\;\;
k(hl)=(kh)l,  \;\;\;
k(lh)=(kl)h,
$$
for all  $k, l\in D$. As a consequence, the unitary magma $D_{L}$ is an algebra in ${\mathbb K}$-{\sf Vect}, where the unit is $1_{D_{L}}=\Pi_{D}^{L}(1)=1$ and $\mu_{D_{L}}=\Pi_{D}^{L}\circ \mu_{D}$.  By \cite[Proposition 2.3]{MED}, we have similar properties for the image of the source morphism, denoted by $D_{R}$. Therefore, if $D_{R}$ is the subspace defined by the image of the source morphism and $h$ is an element of  $D_{R}$, we have that 
$$
(hk)l=h(kl), \;\;\;
k(hl)=(kh)l, \;\;\;	
k(lh)=(kl)h, 
$$
hold for all  $k, l\in D$. Then,   $D_{R}$  is an algebra in ${\mathbb K}$-{\sf Vect} where the unit is $1_{D_{R}}=\Pi_{D}^{R}(1)=1$ and $\mu_{D_{R}}=\Pi_{D}^{R}\circ \mu_{D}$.  For example, in the case of the quasigroupoid magma, the subspaces ${\mathbb K}[{\sf B}]_{L}$ and ${\mathbb K}[{\sf B}]_{R}$ are equal and generated as ${\mathbb K}$-vector spaces by the set $\{id_{\sf B}(x)\; /\; x\in {\sf B}_{0}\}$. 

For a weak Hopf quasigroup $D$, the linear maps $\overline{\Pi}_{D}^{L}, \overline{\Pi}_{D}^{R}:D\rightarrow D$, defined by $$\overline{\Pi}_{D}^{L}(h)=\varepsilon_{D}(1_{(2)}h)1_{(1)}, \;\;\;\;\overline{\Pi}_{D}^{R}(h)=\varepsilon_{D}(h1_{(1)})1_{(2)},$$  are idempotent \cite[Proposition 3.4]{Asian}.  For these maps we have that 
$$
	\overline{\Pi}_{D}^{L}\circ \eta_{D}=\eta_{D}=\overline{\Pi}_{D}^{R}\circ\eta_{D}, \;\;\;\varepsilon_{D}\circ \overline{\Pi}_{D}^{L}=\varepsilon_{D}=\varepsilon_{D}\circ \overline{\Pi}_{D}^{R},
$$
hold. Also, the image of $\overline{\Pi}_{D}^{L}$ is $D_{R}$ and the image of $\overline{\Pi}_{D}^{R}$ is $D_{L}$. Finally, if $D$ is cocommutative, 
$$\Pi_{D}^{L}=\overline{\Pi}_{D}^{L}, \;\;\;\Pi_{D}^{R}=\overline{\Pi}_{D}^{R}.$$

}
\end{apart}

\begin{definition}
\label{morwhq}
{\rm 
Let $D$, $D^{\prime}$ be weak Hopf quasigroups. We will say that a ${\mathbb K}$-linear map $f:D\rightarrow D^{\prime}$ is a morphism of weak Hopf quasigroups if it is a coalgebra morphism such that 
\begin{align}
\label{mkl1}
\Pi^{R}_{D^{\prime}}\circ f & =f\circ \Pi^{R}_{D}, \\
\label{mkl2}
\overline{\Pi}^{L}_{D^{\prime}}\circ f & =f\circ \overline{\Pi}^{L}_{D}, \\
\label{mkl3}
\Pi^{R}_{D^{\prime}}\circ \Pi^{L}_{D^{\prime}}\circ f &=f\circ \Pi^{R}_{D}\circ \Pi^{L}_{D}, \\
\label{mkl4}
f\circ \mu_{D} & =\mu_{D^{\prime}}\circ (f\otimes f)\circ \nabla_{D}, 
\end{align}
hold, where $\nabla_{D}:D\otimes  D\rightarrow D\otimes  D$ is the ${\mathbb K}$-linear idempotent map (see \cite[Definition 3.5]{JA21}) defined by 
$$\nabla_{D}(h\otimes  k)=h_{(1)}\otimes  \Pi_{D}^{R}(h_{(2)})k.$$
		
In the following we will denote by {\sf WHQ} the category whose objects are  weak Hopf quasigroups and whose morphisms are  morphisms of weak Hopf quasigroups  previously defined.
}
\end{definition} 

\begin{example}
\label{exa-mor1}
{\rm 
By \cite[Proposition 3.6]{JA21} we know that if  $\Gamma=(\Gamma_{0},\Gamma_{1}):{\sf A}\rightarrow {\sf A}^{\prime}$ is a morphism of finite quasigroupoids, the linear extension ${\mathbb K}[\Gamma_{1}]: {\mathbb K}[{\sf A}]\rightarrow {\mathbb K}[{\sf A}^{\prime}]$, ${\mathbb K}[\Gamma_{1}](a)= \Gamma_{1}(a)$, is a morphism of weak Hopf quasigroups that we will denote by ${\mathbb K}[\Gamma]$.  Then, if   $({\sf A}, {\sf H})$ is a matched pair of finite quasigroupoids and  $i^{\sf A}: {\sf A}\rightarrow {\sf A}\bowtie {\sf H}$ and $i^{\sf H}: {\sf H}\rightarrow {\sf A}\bowtie {\sf H}$ are the associated monomorphisms, we have that there exists two monomorphisms ${\mathbb K}[i^{\sf A}]: {\mathbb K}[{\sf A}]\rightarrow {\mathbb K}[{\sf A}\bowtie {\sf H}]$ and ${\mathbb K}[i^{\sf H}]: {\mathbb K}[{\sf H}]\rightarrow {\mathbb K}[{\sf A}\bowtie {\sf H}]$ in the category {\sf WHQ}. 
}
\end{example}

\begin{apart}
\label{exmp}
{\rm  Let $({\sf A}, {\sf H})$ be a matched pair of finite quasigroupoids. Let $\varphi_{\sf A}$ be the left action of ${\sf H}$ on ${\sf A}$ and let $\phi_{\sf H}$ be a  right action  of ${\sf A}$ on ${\sf H}$. Then, we can define the following morphisms in ${\mathbb K}$-{\sf Vect} by: 
$$\varphi_{{\mathbb K}[{\sf A}]}: {\mathbb K}[{\sf H}]\otimes {\mathbb K}[{\sf A}]\rightarrow {\mathbb K}[{\sf A}], \;\;\; 
\varphi_{{\mathbb K}[{\sf A}]}(h\otimes a) =
\left\{ \begin{array}{l}
	\varphi_{\sf A} (h,a) \;\; {\rm if} \;\; (h, a)\in {\sf H}_1\;_{s_{\sf H}}\hspace{-0.15cm}\times_{t_{\sf A}} {\sf A}_1
	\\ \\
	0 \;\; {\rm otherwise},
\end{array}\right.
$$
$$\phi_{{\mathbb K}[{\sf H}]}: {\mathbb K}[{\sf H}]\otimes {\mathbb K}[{\sf A}]\rightarrow {\mathbb K}[{\sf H}], \;\;\; 
\phi_{{\mathbb K}[{\sf H}]}(h\otimes a) =
\left\{ \begin{array}{l}
	\phi_{\sf H} (h,a) \;\; {\rm if} \;\; (h, a)\in {\sf H}_1\;_{s_{\sf H}}\hspace{-0.15cm}\times_{t_{\sf A}} {\sf A}_1
	\\ \\
	0 \;\; {\rm otherwise}. 
\end{array}\right.
$$

Note that, by (c3) of Definition \ref{action}, we have that 
$$\varphi_{{\mathbb K}[{\sf H}]}(1_{{\mathbb K}[{\sf A}]}\otimes a) = \sum_{x\in {\sf A}_0}\varphi_{{\mathbb K}[{\sf A}]}(id_{\sf H}(x)\otimes a)=\varphi_{\sf A}(id_{\sf H}(t_{\sf A}(a))\otimes a)=a$$
and, as a consequence, $\varphi_{{\mathbb K}[{\sf A}]}(1_{{\mathbb K}[{\sf A}]}\otimes id_{{\mathbb K}[{\sf A}]})=id_{{\mathbb K}[{\sf A}]}$ holds.

The equality $\varphi_{{\mathbb K}[{\sf A}]}\circ (id_{{\mathbb K}[{\sf A}]}\otimes \varphi_{{\mathbb K}[{\sf A}]})=\varphi_{{\mathbb K}[{\sf A}]}\circ (\mu_{{\mathbb K}[{\sf H}]}\otimes id_{{\mathbb K}[{\sf A}]})$ also holds, because if $(g,h,a)\notin {\sf H}_1\;_{s_{\sf H}}\hspace{-0.15cm}\times_{t_{\sf H}} {\sf H}_1\;_{s_{\sf H}}\hspace{-0.15cm}\times_{t_{\sf A}} {\sf A}_1 $ the evaluation of the two members of the previous identity on $g\otimes h\otimes a$ is zero; On the other hand if $(g,h,a)\in {\sf H}_1\;_{s_{\sf H}}\hspace{-0.15cm}\times_{t_{\sf H}} {\sf H}_1\;_{s_{\sf H}}\hspace{-0.15cm}\times_{t_{\sf A}} {\sf A}_1 $, we have that, by (c2) of Definition \ref{action}, 
$$\varphi_{{\mathbb K}[{\sf A}]}\circ (id_{{\mathbb K}[{\sf A}]}\otimes \varphi_{{\mathbb K}[{\sf A}]})(g\otimes h\otimes a)=\varphi_{{\mathbb K}[{\sf A}]}(g\otimes 	\varphi_{\sf A} (h,a))=	\varphi_{\sf A} (g,\varphi_{\sf A} (h,a))$$
$$=\varphi_{\sf A}(g\star h, a)=\varphi_{{\mathbb K}[{\sf A}]}\circ (\mu_{{\mathbb K}[{\sf H}]}\otimes id_{{\mathbb K}[{\sf A}]})(g\otimes h\otimes a).
$$

Therefore, $({\mathbb K}[{\sf A}], \varphi_{{\mathbb K}[{\sf A}]})$ is an example of left module over the unitary magma ${\mathbb K}[{\sf H}]$. Similarly, by (d2) and (d3) of Definition \ref{action}, we obtain that  $({\mathbb K}[{\sf H}], \phi_{{\mathbb K}[{\sf H}]})$ is an example of right module over the unitary magma ${\mathbb K}[{\sf A}]$.

Let $\Phi:{\mathbb K}[{\sf H}]\otimes {\mathbb K}[{\sf A}]\rightarrow {\mathbb K}[{\sf A}]\otimes {\mathbb K}[{\sf H}]$, 
$\nabla_{\Phi}:{\mathbb K}[{\sf A}]\otimes {\mathbb K}[{\sf H}]\rightarrow {\mathbb K}[{\sf A}]\otimes {\mathbb K}[{\sf H}]$
be the morphisms in ${\mathbb K}$-{\sf Vect} defined by 
$$
\Phi=(\varphi_{{\mathbb K}[{\sf A}]}\otimes \phi_{{\mathbb K}[{\sf H}]})\circ (id_{{\mathbb K}[{\sf A}]}\otimes c_{{\mathbb K}[{\sf H}], {\mathbb K}[{\sf A}]}\otimes id_{{\mathbb K}[{\sf H}]})\circ (\delta_{{\mathbb K}[{\sf H}]}\otimes \delta_{{\mathbb K}[{\sf A}]})
$$
and 
$$
\nabla_{\Phi}=(\mu_{{\mathbb K}[{\sf A}]}\otimes id_{{\mathbb K}[{\sf H}]})\circ (id_{{\mathbb K}[{\sf A}]}\otimes (\Phi\circ (id_{{\mathbb K}[{\sf H}]}\otimes 1_{{\mathbb K}[{\sf A}]}))).
$$

Then, 
$$
\Phi (h\otimes a)= \left\{ \begin{array}{l}
 \varphi_{\sf A}(h,a)\otimes \phi_{\sf H}(h,a)\;\; {\rm if} \;\; (h, a)\in {\sf H}_1\;_{s_{\sf H}}\hspace{-0.15cm}\times_{t_{\sf A}} {\sf A}_1
\\ \\
0 \;\; {\rm otherwise},
\end{array}\right.
$$ 
and 
$$
\nabla_{\Phi} (a\otimes h)= \left\{ \begin{array}{l}
	a\otimes h\;\; {\rm if} \;\; (a, h)\in {\sf A}_1\;_{s_{\sf A}}\hspace{-0.15cm}\times_{t_{\sf H}} {\sf H}_1
	\\ \\
	0 \;\; {\rm otherwise}.
\end{array}\right.
$$

Therefore, $\nabla_{\Phi} $ is an idempotent morphism with image
$$Im(\nabla_{\Phi})=\langle\{ a\otimes h\;\;/ \;\; (a, h)\in {\sf A}_1\;_{s_{\sf A}}\hspace{-0.15cm}\times_{t_{\sf H}} {\sf H}_1 \}\rangle.$$

If we denote by ${\mathbb K}[{\sf A}]\bowtie {\mathbb K}[{\sf H}]$ the image of $\nabla_{\Phi}$, in this ${\mathbb K}$-vector space we can define a product by
\begin{equation}
\label{mump}
\mu_{{\mathbb K}[{\sf A}]\bowtie {\mathbb K}[{\sf H}]}=(\mu_{{\mathbb K}[{\sf A}]}\otimes \mu_{{\mathbb K}[{\sf H}]})\circ (id_{{\mathbb K}[{\sf A}]}\otimes \Phi \otimes id_{{\mathbb K}[{\sf H}]}).
\end{equation}

Then, 
$$
\mu_{{\mathbb K}[{\sf A}]\bowtie {\mathbb K}[{\sf H}]} (a\otimes h\otimes b\otimes g)= \left\{ \begin{array}{l}
a\bullet \varphi_{\sf A}(h,b)\otimes \phi_{\sf A}(h,b)\star g \;\; {\rm if} \;\; (h, b)\in {\sf H}_1\;_{s_{\sf H}}\hspace{-0.15cm}\times_{t_{\sf A}} {\sf A}_1
\\ \\
0 \;\; {\rm otherwise}.
\end{array}\right.
$$

Note that this product is well defined because, by (${\rm a}2-2)$ of Definition \ref{quasigroupoid} and (e1) of Definition \ref{mp}, we have that 
$$s_{\sf A}(a\bullet \varphi_{\sf A}(h,b))=t_{\sf H}(\phi_{\sf A}(h,b)\star g).$$
}
\end{apart}

\begin{theorem}
\label{th1mp}
Let $({\sf A}, {\sf H})$ be a matched pair of finite quasigroupoids. The ${\mathbb K}$-vector  space $${\mathbb K}[{\sf A}]\bowtie {\mathbb K}[{\sf H}]$$ defined in the previous lines is a unitary magma with the product defined in (\ref{mump}) and unit 
$$1_{{\mathbb K}[{\sf A}]\bowtie {\mathbb K}[{\sf H}]}= \sum_{x\in {\sf A}_0}id_{\sf A}(x)\otimes id_{\sf H}(x).$$
\end{theorem}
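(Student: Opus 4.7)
The plan is to verify the three requirements that define a unitary magma structure on ${\mathbb K}[{\sf A}]\bowtie {\mathbb K}[{\sf H}]$: that the proposed unit actually lives in this subspace, that the product $\mu_{{\mathbb K}[{\sf A}]\bowtie {\mathbb K}[{\sf H}]}$ is well-defined and closed on it, and that the unit laws hold. Throughout, I would work on the distinguished basis consisting of elementary tensors $a\otimes h$ with $(a,h)\in {\sf A}_1\;_{s_{\sf A}}\hspace{-0.1cm}\times_{t_{\sf H}} {\sf H}_1$, which generate $\mathrm{Im}(\nabla_{\Phi})$.

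First, for each $x\in {\sf A}_0$ we have $s_{\sf A}(id_{\sf A}(x))=x=t_{\sf H}(id_{\sf H}(x))$ by (a1) of Definition \ref{quasigroupoid}, so each summand $id_{\sf A}(x)\otimes id_{\sf H}(x)$ lies in the distinguished basis; hence $1_{{\mathbb K}[{\sf A}]\bowtie {\mathbb K}[{\sf H}]}\in {\mathbb K}[{\sf A}]\bowtie {\mathbb K}[{\sf H}]$. Next, for the closure of the product I would take $a\otimes h$ and $b\otimes g$ in the distinguished basis. If $(h,b)\notin {\sf H}_1\;_{s_{\sf H}}\hspace{-0.1cm}\times_{t_{\sf A}}{\sf A}_1$, then $\mu_{{\mathbb K}[{\sf A}]\bowtie {\mathbb K}[{\sf H}]}(a\otimes h\otimes b\otimes g)=0\in {\mathbb K}[{\sf A}]\bowtie {\mathbb K}[{\sf H}]$. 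If $(h,b)\in {\sf H}_1\;_{s_{\sf H}}\hspace{-0.1cm}\times_{t_{\sf A}}{\sf A}_1$, the already-noted identity $s_{\sf A}(a\bullet\varphi_{\sf A}(h,b))=t_{\sf H}(\phi_{\sf H}(h,b)\star g)$, coming from (a2-2) of Definition \ref{quasigroupoid} and (e1) of Definition \ref{mp}, ensures that the output is again a distinguished basis vector.

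For the left unit law, I would evaluate $\mu_{{\mathbb K}[{\sf A}]\bowtie {\mathbb K}[{\sf H}]}(1_{{\mathbb K}[{\sf A}]\bowtie {\mathbb K}[{\sf H}]}\otimes a\otimes h)$ on a basis vector $a\otimes h$. Only the summand indexed by $x=t_{\sf A}(a)$ survives (since one needs $s_{\sf H}(id_{\sf H}(x))=t_{\sf A}(a)$). That summand equals
\[
\bigl(id_{\sf A}(t_{\sf A}(a))\bullet\varphi_{\sf A}(id_{\sf H}(t_{\sf A}(a)),a)\bigr)\otimes\bigl(\phi_{\sf H}(id_{\sf H}(t_{\sf A}(a)),a)\star h\bigr),
\]
which collapses to $a\otimes h$ after applying (c3) of Definition \ref{action}, identity (\ref{P-2}), (a2-1) of Definition \ref{quasigroupoid}, and the compatibility $s_{\sf A}(a)=t_{\sf H}(h)$ coming from $(a,h)$ being a distinguished basis vector. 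The right unit law is proved symmetrically: only $x=s_{\sf H}(h)$ contributes, and the resulting summand reduces to $a\otimes h$ using (\ref{P-1}), (d3) of Definition \ref{action}, (a2-1) of Definition \ref{quasigroupoid}, and the same compatibility $s_{\sf A}(a)=t_{\sf H}(h)$.

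There is no genuinely hard step here; the argument is a bookkeeping verification that the two actions, their compatibility (e1) from Definition \ref{mp}, and the various unit identities collected in Definitions \ref{quasigroupoid} and \ref{action} together with the derived identities (\ref{P-1}) and (\ref{P-2}) of Proposition \ref{properties} suffice. The only point requiring a little care is to make sure all zero summands are correctly identified before extracting the surviving one in each unit-law computation; this is where the definition of $\mu_{{\mathbb K}[{\sf A}]\bowtie {\mathbb K}[{\sf H}]}$ as a piecewise formula (zero unless the source/target compatibility for ${\sf H}_1\;_{s_{\sf H}}\hspace{-0.1cm}\times_{t_{\sf A}}{\sf A}_1$ holds) is essential.
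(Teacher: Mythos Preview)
Your proposal is correct and follows essentially the same route as the paper's proof: the paper verifies the left unit law on a basis vector using (c3) of Definition~\ref{action}, identity~(\ref{P-2}), the compatibility $s_{\sf A}(a)=t_{\sf H}(h)$, and (a2-1) of Definition~\ref{quasigroupoid}, and then declares the right unit law similar. Your version is simply more thorough in also spelling out closure of the product (which the paper records in the paragraph preceding the theorem) and membership of the unit in ${\mathbb K}[{\sf A}]\bowtie{\mathbb K}[{\sf H}]$.
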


\begin{proof} We will prove that $\mu_{{\mathbb K}[{\sf A}]\bowtie {\mathbb K}[{\sf H}]}(1_{{\mathbb K}[{\sf A}]\bowtie {\mathbb K}[{\sf H}]}\otimes a\otimes h )=a\otimes h$ for all $(a,h)\in {\mathbb K}[{\sf A}]\bowtie {\mathbb K}[{\sf H}]$. The proof of $\mu_{{\mathbb K}[{\sf A}]\bowtie {\mathbb K}[{\sf H}]}(a\otimes h\otimes 1_{{\mathbb K}[{\sf A}]\bowtie {\mathbb K}[{\sf H}]})=a\otimes h$ is similar and we left the details to the reader.
	
Let $(a,h)\in {\mathbb K}[{\sf A}]\bowtie {\mathbb K}[{\sf H}]$. Then, 
\begin{itemize}
\item[ ]$\hspace{0.38cm}\mu_{{\mathbb K}[{\sf A}]\bowtie {\mathbb K}[{\sf H}]}(1_{{\mathbb K}[{\sf A}]\bowtie {\mathbb K}[{\sf H}]}\otimes a\otimes h )$
\item [ ]$= id_{\sf A}  (t_{\sf A}(a)) \bullet  \varphi_{\sf A}( id_{\sf H}  (t_{\sf A}(a)),a)\otimes \phi_{\sf H}( id_{\sf H}  (t_{\sf A}(a)),a)\star h $ {\scriptsize (by the definition of $\mu_{{\mathbb K}[{\sf A}]\bowtie {\mathbb K}[{\sf H}]}$)}
\item [ ]$=  id_{\sf A}  (t_{\sf A}(a)) \bullet  a\otimes  id_{\sf H}  (s_{\sf A}(a))  \star h$ {\scriptsize (by (c3) of Definition \ref{action} and \ref{P-2}))}
\item [ ]$= a\otimes   id_{\sf H}  (t_{\sf H}(h))  \star h$ {\scriptsize (by $s_{\sf A}(a)=t_{\sf H}(h)$ and (${\rm a}2-1$) of Definition \ref{quasigroupoid})}
\item [ ]$=  a\otimes h  $ {\scriptsize (by (${\rm a}2-1$) of Definition \ref{quasigroupoid})}.
\end{itemize}
\end{proof}

\begin{theorem}
\label{th2mp}
Let $({\sf A}, {\sf H})$ be a matched pair of finite quasigroupoids. The unitary magma $${\mathbb K}[{\sf A}]\bowtie {\mathbb K}[{\sf H}]$$ is a cocommutative weak Hopf quasigroup where for all not nulll element $a\otimes h\in {\mathbb K}[{\sf A}]\bowtie {\mathbb K}[{\sf H}]$, the counit is defined by $\varepsilon_{{\mathbb K}[{\sf A}]\bowtie {\mathbb K}[{\sf H}]}(a\otimes h)=1$, the coproduct by $\delta_{{\mathbb K}[{\sf A}]\bowtie {\mathbb K}[{\sf H}]}(a\otimes h)=a\otimes h\otimes a\otimes h$ and the antipode by $$\lambda_{{\mathbb K}[{\sf A}]\bowtie {\mathbb K}[{\sf H}]}(a\otimes h)=\varphi_{\sf A}(\lambda_{\sf H}(h), \lambda_{\sf A}(a))\otimes \phi_{\sf H}(\lambda_{\sf H}(h), \lambda_{\sf A}(a)).$$
\end{theorem}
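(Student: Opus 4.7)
My plan is to avoid a direct axiom-by-axiom verification and instead transport the weak Hopf quasigroup structure from ${\mathbb K}[{\sf A}\bowtie {\sf H}]$, which is already known to be a cocommutative weak Hopf quasigroup by Example \ref{ex-k} (since ${\sf A}\bowtie {\sf H}$ is a finite quasigroupoid by Theorem \ref{match-1}). Concretely, I would define a ${\mathbb K}$-linear map
$$\Theta: {\mathbb K}[{\sf A}]\bowtie {\mathbb K}[{\sf H}]\longrightarrow {\mathbb K}[{\sf A}\bowtie {\sf H}]$$
on basis elements by $\Theta(a\otimes h)=(a,h)$ whenever $(a,h)\in ({\sf A}\bowtie {\sf H})_{1}={\sf A}_1\;_{s_{\sf A}}\hspace{-0.1cm}\times_{t_{\sf H}} {\sf H}_1$. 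Since the image of $\nabla_{\Phi}$ has precisely the set $\{a\otimes h\,:\,(a,h)\in {\sf A}_1\;_{s_{\sf A}}\hspace{-0.1cm}\times_{t_{\sf H}} {\sf H}_1\}$ as a ${\mathbb K}$-basis, and the same set indexes a basis of ${\mathbb K}[{\sf A}\bowtie {\sf H}]$, the map $\Theta$ is a ${\mathbb K}$-linear isomorphism.

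The next step is to check that $\Theta$ is a morphism of unitary magmas. For the unit, $\Theta(1_{{\mathbb K}[{\sf A}]\bowtie {\mathbb K}[{\sf H}]})=\sum_{x\in {\sf A}_0}(id_{\sf A}(x),id_{\sf H}(x))=\sum_{x\in {\sf A}_0}id_{{\sf A}\bowtie {\sf H}}(x)=1_{{\mathbb K}[{\sf A}\bowtie {\sf H}]}$. For the product, given basis elements $a\otimes h$ and $b\otimes g$ in $Im(\nabla_{\Phi})$, formula (\ref{mump}) yields the nonzero value $a\bullet \varphi_{\sf A}(h,b)\otimes \phi_{\sf H}(h,b)\star g$ precisely when $s_{\sf H}(h)=t_{\sf A}(b)$, which is exactly the condition $s_{{\sf A}\bowtie {\sf H}}(a,h)=t_{{\sf A}\bowtie {\sf H}}(b,g)$ for the product $(a,h)._{\Psi}(b,g)$ in ${\sf A}\bowtie {\sf H}$ to be defined. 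In that case the value coincides with $(a\bullet \varphi_{\sf A}(h,b),\phi_{\sf H}(h,b)\star g)$, so $\Theta\circ \mu_{{\mathbb K}[{\sf A}]\bowtie {\mathbb K}[{\sf H}]}=\mu_{{\mathbb K}[{\sf A}\bowtie {\sf H}]}\circ (\Theta\otimes \Theta)$ on basis tensors.

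Having the magma isomorphism, I would transport the coalgebra and antipode from ${\mathbb K}[{\sf A}\bowtie {\sf H}]$ to ${\mathbb K}[{\sf A}]\bowtie {\mathbb K}[{\sf H}]$ by $\delta=(\Theta^{-1}\otimes \Theta^{-1})\circ \delta_{{\mathbb K}[{\sf A}\bowtie {\sf H}]}\circ \Theta$, $\varepsilon=\varepsilon_{{\mathbb K}[{\sf A}\bowtie {\sf H}]}\circ \Theta$ and $\lambda=\Theta^{-1}\circ \lambda_{{\mathbb K}[{\sf A}\bowtie {\sf H}]}\circ \Theta$. A direct calculation on a basis element $a\otimes h$ then gives $\varepsilon(a\otimes h)=1$, $\delta(a\otimes h)=a\otimes h\otimes a\otimes h$, and, using the explicit formula for $\lambda_{{\sf A}\bowtie {\sf H}}$ in Theorem \ref{match-1},
$$\lambda(a\otimes h)=\varphi_{\sf A}(\lambda_{\sf H}(h),\lambda_{\sf A}(a))\otimes \phi_{\sf H}(\lambda_{\sf H}(h),\lambda_{\sf A}(a)),$$
which are precisely the structures proposed in the statement. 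Because being a cocommutative weak Hopf quasigroup is transported along a magma isomorphism together with the corresponding coalgebra/antipode data, ${\mathbb K}[{\sf A}]\bowtie {\mathbb K}[{\sf H}]$ inherits the structure and cocommutativity from ${\mathbb K}[{\sf A}\bowtie {\sf H}]$.

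The main obstacle I foresee is the bookkeeping needed to confirm that the product defined in (\ref{mump}) on the non-tensor-factorised space $({\mathbb K}[{\sf A}]\otimes {\mathbb K}[{\sf H}])\otimes ({\mathbb K}[{\sf A}]\otimes {\mathbb K}[{\sf H}])$ genuinely restricts to $Im(\nabla_{\Phi})\otimes Im(\nabla_{\Phi})\to Im(\nabla_{\Phi})$, i.e.\ that the output $a\bullet \varphi_{\sf A}(h,b)\otimes \phi_{\sf H}(h,b)\star g$ lies in $Im(\nabla_{\Phi})$; this amounts to checking $s_{\sf A}(a\bullet \varphi_{\sf A}(h,b))=t_{\sf H}(\phi_{\sf H}(h,b)\star g)$, which follows from (${\rm a2-2}$) of Definition \ref{quasigroupoid} together with (e1) of Definition \ref{mp}, exactly as remarked at the end of \ref{exmp}. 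Once this closure is settled, the rest of the argument is a transport of structure and no separate verification of the weak Hopf quasigroup axioms (d1)--(d4) is required.
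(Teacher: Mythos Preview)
Your proposal is correct and takes a genuinely different route from the paper. The paper proceeds by direct axiom-by-axiom verification: it checks (d1), (d2), (d3) of Definition~\ref{whq} explicitly on basis tensors, computes the target and source maps $\Pi_{D}^{L}$, $\Pi_{D}^{R}$, verifies (d4-1)--(d4-3), and then carries out lengthy calculations (using (e2), (d2), (\ref{P-3}), (\ref{P-4}), (\ref{E-3})--(\ref{E-5}), etc.) to establish (d4-4) and (d4-5), leaving (d4-6) and (d4-7) to the reader. Your approach bypasses all of this by exploiting Theorem~\ref{match-1} and Example~\ref{ex-k}: since ${\sf A}\bowtie {\sf H}$ is already known to be a finite quasigroupoid, ${\mathbb K}[{\sf A}\bowtie {\sf H}]$ is automatically a cocommutative weak Hopf quasigroup, and you simply transport this structure along the evident basis bijection $\Theta$. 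What your approach buys is economy and conceptual clarity---in effect you merge Theorem~\ref{th2mp} with the subsequent isomorphism theorem into a single step, so the weak Hopf quasigroup axioms never need to be checked by hand. What the paper's approach buys is that it makes the explicit form of $\Pi_{D}^{L}$ and $\Pi_{D}^{R}$ visible (these are used in the proof of the next theorem) and keeps Theorem~\ref{th2mp} logically independent of the identification with ${\mathbb K}[{\sf A}\bowtie {\sf H}]$, though in the present development that independence is not really needed.
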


\begin{proof} In order to simplify the notation, in this proof we will denote the ${\mathbb K}$-vector space ${\mathbb K}[{\sf A}]\bowtie {\mathbb K}[{\sf H}]$ by $D$.   

First note that it is easy to show that $D$ is cocommutative coalgebra. Also, if $a\otimes h\otimes b\otimes g \in D\otimes D$, we have that 
\begin{align*}
\;	&\hspace{0.38cm}((\mu_{D}\ot \mu_{D})\co \delta_{D\ot D})(a\otimes h\otimes b\otimes g)\\
\;	& = \left\{ \begin{array}{l}
a\bullet \varphi_{\sf A}(h,b)\otimes \phi_{\sf A}(h,b)\star g\otimes a\bullet \varphi_{\sf A}(h,b)\otimes \phi_{\sf A}(h,b)\star g	 \;\; {\rm if} \;\; (h, b)\in {\sf H}_1\;_{s_{\sf H}}\hspace{-0.15cm}\times_{t_{\sf A}} {\sf A}_1\\
	\\
	0 \;\; {\rm otherwise}
\end{array}\right.\\
\;	& = (\delta_{D}\circ \mu_{D}) (a\otimes h\otimes b\otimes g)
\end{align*}

Therefore, (d1) of Definition \ref{whq} holds.  The condition (d2) of Definition \ref{whq}  holds because 
\begin{align*}
\;	& \hspace{0.42cm} (\varepsilon_{D}\co \mu_{D}\co (\mu_{D}\ot id_{D}))(a\otimes h\otimes b\otimes g\otimes c\otimes l)\\
\; & = (\varepsilon_{D}\co \mu_{D}\co (id_{D}\ot \mu_{D}))(a\otimes h\otimes b\otimes g\otimes c\otimes l)\\
\;	& = (((\varepsilon_{D}\co \mu_{D})\ot (\varepsilon_{D}\co
\mu_{D}))\co (id_{D}\ot \delta_{D}\ot id_{D}))(a\otimes h\otimes b\otimes g\otimes c\otimes l)\\
\;	& =(((\varepsilon_{D}\co \mu_{D})\ot (\varepsilon_{D}\co
\mu_{D}))\co (id_{D}\ot (c_{D,D}\co\delta_{D})\ot id_{D}))(a\otimes h\otimes b\otimes g\otimes c\otimes l)\\
\;	& = \left\{ \begin{array}{l}
1	 \;\; {\rm if} \;\; (h, b)\in {\sf H}_1\;_{s_{\sf H}}\hspace{-0.15cm}\times_{t_{\sf A}} {\sf A}_1\;{\rm and}\;  (g, c)\in {\sf H}_1\;_{s_{\sf H}}\hspace{-0.15cm}\times_{t_{\sf A}} {\sf A}_1\\
\\
0 \;\; {\rm otherwise}
\end{array}\right.
\end{align*}
and (d3) of Definition \ref{whq} follows by
\begin{align*}
\;	&  \hspace{0.42cm}  ((\delta_{D}\ot id_{D})\co \delta_{D}) (1_{D})\\
\; & =((id_{D}\ot
\mu_{D}\ot id_{D})\co (\delta_{D}(1_{D})) \ot \delta_{D}(1_{D}))\\
\; & =(id_{D}\ot (\mu_{D}\co c_{D,D})\ot
id_{D})\co (\delta_{D}(1_{D}) \ot \delta_{D} (1_{D})),\\
\; & =  \sum_{x\in {\sf A}_0} id_{\sf A}(x)\otimes id_{\sf H}(x)\otimes id_{\sf A}(x)\otimes id_{\sf H}(x)\otimes id_{\sf A}(x)\otimes id_{\sf H}(x).
\end{align*}

The antipode $\lambda_{D}$ is well defined because by (e1) of Definition \ref{mp}, we have that $$s_{\sf A}(\varphi_{\sf A}(\lambda_{\sf H}(h), \lambda_{\sf A}(a)))=t_{\sf H}(\phi_{\sf H}(\lambda_{\sf H}(h), \lambda_{\sf A}(a))).$$ 

Then the target map is 
\begin{itemize}
\item[ ]$\hspace{0.38cm}\Pi_{D}^{L}(a\otimes h)$ 
\item [ ]$= a\bullet \varphi_{\sf A}(h,  \varphi_{\sf A}(\lambda_{\sf H}(h), \lambda_{\sf A}(a)))\otimes  \phi_{\sf H}(h,  \varphi_{\sf A}(\lambda_{\sf H}(h), \lambda_{\sf A}(a)))\star \phi_{\sf H}(\lambda_{\sf H}(h), \lambda_{\sf A}(a))$ {\scriptsize (by definition of $\Pi_{D}^{L}$)}
\item [ ]$=a\bullet \varphi_{\sf A}(h\star \lambda_{\sf H}(h), \lambda_{\sf A}(a))\otimes  \phi_{\sf H}(\lambda_{\sf H}(\lambda_{\sf H}(h)),  \varphi_{\sf A}(\lambda_{\sf H}(h), \lambda_{\sf A}(a)))\star \phi_{\sf H}(\lambda_{\sf H}(h), \lambda_{\sf A}(a))  $ {\scriptsize (by (c2) of }
\item[ ]$\hspace{0.38cm}$ {\scriptsize  Definition \ref{action} and (\ref{E-5}))}
\item [ ]$=a\bullet \varphi_{\sf A}(id_{\sf H}(t_{\sf H}(h)), \lambda_{\sf A}(a)) \otimes \lambda_{H}(\phi_{\sf H}(\lambda_{\sf H}(h), \lambda_{\sf A}(a)))\star \phi_{\sf H}(\lambda_{\sf H}(h), \lambda_{\sf A}(a))    $ {\scriptsize (by (\ref{E-4}) and (\ref{P-4}))}
\item [ ]$= a\bullet \varphi_{\sf A}(id_{\sf H}(s_{\sf A}(a)), \lambda_{\sf A}(a)) \otimes id_{\sf H}(s_{\sf H}(\phi_{\sf H}(\lambda_{\sf H}(h), \lambda_{\sf A}(a))))$ {\scriptsize (by $s_{\sf A}(a)=t_{\sf H}(h)$ and (\ref{E-3}))}
\item [ ]$=a\bullet \varphi_{\sf A}(id_{\sf H}(t_{\sf A}(\lambda_{\sf A}(a))), \lambda_{\sf A}(a)) \otimes id_{\sf H}(s_{\sf A}(\lambda_{\sf A}(a)))  $ {\scriptsize (by (\ref{E-3}) and (d1) of Definition \ref{action})}
\item [ ]$=a\bullet \lambda_{\sf A}(a) \otimes  id_{\sf H}(t_{\sf A}(a))$ {\scriptsize (by  (c3) of Definition \ref{action} and (\ref{E-1}))}
\item [ ]$=  id_{\sf A}(t_{\sf A}(a)) \otimes  id_{\sf H}(t_{\sf A}(a))$ {\scriptsize (by (\ref{E-4}))}
\end{itemize}
and,  similarly, the expression for the source map is
$$\Pi_{D}^{R}(a\otimes h)= id_{\sf A}(s_{\sf H}(h)) \otimes  id_{\sf H}(s_{\sf H}(h)).$$

As a consequence, by a reasoning similar to that developed in the previous calculations, we have that (${\rm d}4-1$) and (${\rm d}4-2$)  of Definition \ref{whq} hold.  For example,  (${\rm d}4-1$) follows by 
\begin{align*}
\;	&  \hspace{0.42cm} (((\varepsilon_{D}\co
\mu_{D})\ot id_{D})\co (id_{D}\ot c_{D,D}))(\delta_{D}(1_{D})\ot
a\otimes h)\\
\;	& =(\varepsilon_{D}\co
\mu_{D})(id_{\sf A}(t_{\sf A}(a))\bullet \varphi_{\sf A}(id_{\sf H}(t_{\sf A}(a)), a)\otimes \phi_{\sf H}(id_{\sf H}(t_{\sf A}(a)), a)\star h)\otimes  id_{\sf A}(t_{\sf A}(a)) \otimes  id_{\sf H}(t_{\sf A}(a))\\
\;	& =(\varepsilon_{D}\co
\mu_{D})(a\otimes h)\otimes  id_{\sf A}(t_{\sf A}(a)) \otimes  id_{\sf H}(t_{\sf A}(a))\\
\;	& =id_{\sf A}(t_{\sf A}(a)) \otimes  id_{\sf H}(t_{\sf A}(a))\\
\;	& =\Pi_{D}^{L}(a\otimes h).
\end{align*}

Also, we can prove that $\lambda_{D}\ast \Pi_{D}^{L}=\lambda_{D}$. Indeed: 
\begin{itemize}
\item[ ]$\hspace{0.38cm}(\lambda_{D}\ast \Pi_{D}^{L})(a\otimes h)$ 
\item [ ]$=  \varphi_{\sf A}(\lambda_{\sf H}(h), \lambda_{\sf A}(a))\bullet    \varphi_{\sf A}( \phi_{\sf H}(\lambda_{\sf H}(h), \lambda_{\sf A}(a)), id_{\sf A}(t_{\sf A}(a)) )\otimes  \phi_{\sf H}( \phi_{\sf H}(\lambda_{\sf H}(h), \lambda_{\sf A}(a)), id_{\sf A}(t_{\sf A}(a)) ) $ 
\item[ ]$\hspace{0.38cm}\star  id_{\sf H}(t_{\sf A}(a))$ {\scriptsize (by the definitions of $\lambda_{D}$, $\Pi_{D}^{L}$ and the convolution product)}
	
\item [ ]$=  \varphi_{\sf A}(\lambda_{\sf H}(h), \lambda_{\sf A}(a))\bullet    \varphi_{\sf A}( \phi_{\sf H}(\lambda_{\sf H}(h), \lambda_{\sf A}(a)), id_{\sf A}(s_{\sf H}( \phi_{\sf H}(\lambda_{\sf H}(h), \lambda_{\sf A}(a)))) )  $ 
\item[ ]$\hspace{0.38cm}\otimes  \phi_{\sf H}( \phi_{\sf H}(\lambda_{\sf H}(h), \lambda_{\sf A}(a)), id_{\sf A}(s_{\sf H}( \phi_{\sf H}(\lambda_{\sf H}(h), \lambda_{\sf A}(a))) ) \star  id_{\sf H}(s_{\sf H}( \phi_{\sf H}(\lambda_{\sf H}(h), \lambda_{\sf A}(a))))$ 
\item[ ]$\hspace{0.38cm}$ {\scriptsize (by (\ref{E-1}) and (d1) of Definition \ref{action})}
	
\item [ ]$=  \varphi_{\sf A}(\lambda_{\sf H}(h), \lambda_{\sf A}(a))\bullet    id_{\sf A}(t_{\sf H}( \phi_{\sf H}(\lambda_{\sf H}(h), \lambda_{\sf A}(a)))) )$
\item[ ]$\hspace{0.38cm}\otimes   \phi_{\sf H}(\lambda_{\sf H}(h), \lambda_{\sf A}(a))\star  id_{\sf H}(s_{\sf H}( \phi_{\sf H}(\lambda_{\sf H}(h), \lambda_{\sf A}(a))))$ {\scriptsize (by (\ref{P-1}) and (d3) of Definition \ref{action})}
	
\item [ ]$=  \varphi_{\sf A}(\lambda_{\sf H}(h), \lambda_{\sf A}(a))\bullet    id_{\sf A}(s_{\sf A}( \varphi_{\sf A}(\lambda_{\sf H}(h), \lambda_{\sf A}(a)))) )\otimes   \phi_{\sf H}(\lambda_{\sf H}(h), \lambda_{\sf A}(a))$ {\scriptsize (by (e1) of Definition \ref{mp} }
\item[ ]$\hspace{0.38cm}$ {\scriptsize and (${\rm a}2-1$) of Definition \ref{quasigroupoid})}
	
\item [ ]$=\lambda_{D}(a\otimes h)${\scriptsize (by  (${\rm a}2-1$) of Definition \ref{quasigroupoid})}.
\end{itemize}

Similarly, $\Pi_{D}^{R}\ast \lambda_{D}=\lambda_{D}$ and therefore (${\rm d}4-3$)  of Definition \ref{whq} holds.

To end the proof, we prove that equalities (${\rm d}4-4$)  and  (${\rm d}4-5$) of Definition \ref{whq}. The proof for (${\rm d}4-6$)  and  (${\rm d}4-7$) follow a similar pattern and we leave the details to the reader.  

Let $a\otimes h$ and  $b\otimes g$ be in $D$. Then, by the definitions of $\mu_{D}$ and $\lambda_{D}$ we have that 
$$(\mu_D\circ (\lambda_D\ot \mu_D)\circ (\delta_D\ot id_{D}))(a\otimes h\otimes b\otimes g)$$
$$= \left\{ \begin{array}{l}
 \left\{ \begin{array}{l} 	\varphi_{\sf A}(\lambda_{\sf H}(h), \lambda_{\sf A}(a))\bullet \varphi_{\sf A}(\phi_{\sf H}(\lambda_{\sf H}(h), \lambda_{\sf A}(a)), a\bullet \varphi_{\sf A}(h, b))
\\
\otimes\;\; \phi_{\sf H}(\phi_{\sf H}(\lambda_{\sf H}(h), \lambda_{\sf A}(a)), a\bullet \varphi_{\sf A}(h, b))\star (\phi_{\sf H}(h, b)\star g)) \;\;\;\;\; {\rm if} \;\; (h, b)\in {\sf H}_1\;_{s_{\sf H}}\hspace{-0.15cm}\times_{t_{\sf A}} {\sf A}_1,\\ \end{array}\right.
\\
\\
0 \;\; {\rm otherwise.}
\end{array}\right.
$$

Then, 
\begin{itemize}
\item[ ]$\hspace{0.38cm}	\varphi_{\sf A}(\lambda_{\sf H}(h), \lambda_{\sf A}(a))\bullet \varphi_{\sf A}(\phi_{\sf H}(\lambda_{\sf H}(h), \lambda_{\sf A}(a)), a\bullet \varphi_{\sf A}(h, b))$ 
\item [ ]$= \varphi_{\sf A}(\lambda_{\sf H}(h), \lambda_{\sf A}(a))\bullet  (\varphi_{\sf A}(\phi_{\sf H}(\lambda_{\sf H}(h), \lambda_{\sf A}(a)), a)\bullet  (\varphi_{\sf A}(\phi_{\sf H}(\phi_{\sf H}(\lambda_{\sf H}(h), \lambda_{\sf A}(a)), a), \varphi_{\sf A}(h,b)))) $ {\scriptsize (by }
\item[ ]$\hspace{0.38cm}$  {\scriptsize  (e2) of Definition \ref{mp})}
\item [ ]$=  \varphi_{\sf A}(\lambda_{\sf H}(h), \lambda_{\sf A}(a))\bullet (\varphi_{\sf A}(\phi_{\sf H}(\lambda_{\sf H}(h), \lambda_{\sf A}(a)), \lambda_{\sf A}(\lambda_{\sf A}(a)))\bullet   \varphi_{\sf A}(\phi_{\sf H}(\lambda_{\sf H}(h), \lambda_{\sf A}(a)\bullet a), \varphi_{\sf A}(h,b))) $ 
\item[ ]$\hspace{0.38cm}$ {\scriptsize (by (\ref{E-5}) and (d2) of Definition \ref{action}))}
 \item[ ]$=\varphi_{\sf A}(\lambda_{\sf H}(h), \lambda_{\sf A}(a))\bullet (\lambda_{A}(\varphi_{\sf A}(\lambda_{\sf H}(h), \lambda_{\sf A}(a)))\bullet   \varphi_{\sf A}(\phi_{\sf H}(\lambda_{\sf H}(h), id_{\sf A}(s_{\sf A}(a)), \varphi_{\sf A}(h,b)))) $ 
\item[ ]$\hspace{0.38cm}$ {\scriptsize (by  (\ref{P-3}) and (\ref{E-3}))}
\item [ ]$= \varphi_{\sf A}(\phi_{\sf H}(\lambda_{\sf H}(h), id_{\sf A}(s_{\sf A}(a)))), \varphi_{\sf A}(h,b)))  $ {\scriptsize (by (${\rm a}2-3$) of Definition \ref{quasigroupoid})}
\item [ ]$= \varphi_{\sf A}(\phi_{\sf H}(\lambda_{\sf H}(h), id_{\sf A}(t_{\sf H}(h))), \varphi_{\sf A}(h,b))  $ {\scriptsize (by $s_{\sf A}(a)=t_{\sf H}(h)$)}
\item [ ]$=\varphi_{\sf A}(\phi_{\sf H}(\lambda_{\sf H}(h), id_{\sf A}(s_{\sf H}(\lambda_{\sf H}(h)))), \varphi_{\sf A}(h,b)) $ {\scriptsize (by (\ref{E-1}))}
\item [ ]$= \varphi_{\sf A}(\lambda_{\sf H}(h), \varphi_{\sf A}(h,b)))$ {\scriptsize (by (d3) of Definition \ref{action})}
\item [ ]$=\varphi_{\sf A}(\lambda_{\sf H}(h)\star h,b) $ {\scriptsize (by (c2) of Definition \ref{action})}
\item [ ]$=\varphi_{\sf A}(id_{\sf H}(s_{\sf H}(h)),b) $ {\scriptsize (by (\ref{E-3}))}
\item [ ]$= \varphi_{\sf A}(id_{\sf H}(t_{\sf A}(b)),b) $ {\scriptsize (by $t_{\sf A}(b)=s_{\sf H}(h)$)}
\item [ ]$= b$ {\scriptsize (by (c3) of Definition \ref{action})}
\end{itemize}
and 
\begin{itemize}
	\item[ ]$\hspace{0.38cm}\phi_{\sf H}(\phi_{\sf H}(\lambda_{\sf H}(h), \lambda_{\sf A}(a)), a\bullet \varphi_{\sf A}(h, b))\star (\phi_{\sf H}(h, b)\star g))$ 
	\item [ ]$=\phi_{\sf H}(\lambda_{\sf H}(h), \lambda_{\sf A}(a)\bullet (a\bullet \varphi_{\sf A}(h, b)) )\star (\phi_{\sf H}(h, b)\star g))$ {\scriptsize (by  (d2) of Definition \ref{action})}
	\item[ ]$=\phi_{\sf H}(\lambda_{\sf H}(h), \varphi_{\sf A}(h, b) )\star (\phi_{\sf H}(h, b)\star g))$  {\scriptsize (by $({\rm a}2-3$) of Definition \ref{quasigroupoid})}
	\item [ ]$=\lambda_{\sf H}( \phi_{\sf H}(h, b)) \star (\phi_{\sf H}(h, b)\star g)) $  {\scriptsize (by (\ref{P-4}))}
	\item[ ]$=g $  {\scriptsize (by ${\rm a}2-3$) of Definition \ref{quasigroupoid})}.
\end{itemize}

Therefore, 
$$(\mu_D\circ (\lambda_D\ot \mu_D)\circ (\delta_D\ot id_{D}))(a\otimes h\otimes b\otimes g)$$
$$= \left\{ \begin{array}{l}
b\otimes g \;\;\;{\rm if} \;\; (h, b)\in {\sf H}_1\;_{s_{\sf H}}\hspace{-0.15cm}\times_{t_{\sf A}} {\sf A}_1,\\ 
	\\
	0 \;\; {\rm otherwise.}
\end{array}\right.
$$

On the other hand, using similar arguments we obtain
\begin{align*}
\;	& \;\;\;\;\;\;(\mu_D\circ ( \Pi_{D}^{R}\otimes id_{D}))(a\otimes h\otimes b\otimes g)\\
\;	& = \left\{ \begin{array}{l}
id_{\sf A}(s_{\sf H}(h))\bullet \varphi_{\sf A}(id_{\sf H}(s_{\sf H}(h)), b)\otimes \varphi_{\sf A}(id_{\sf H}(s_{\sf H}(h)), b)\star g	 \;\;\; {\rm if} \;\; (h, b)\in {\sf H}_1\;_{s_{\sf H}}\hspace{-0.15cm}\times_{t_{\sf A}} {\sf A}_1,\\
\\	
0 \;\; {\rm otherwise.}
\end{array}\right.\\
\;	& = \left\{ \begin{array}{l}
id_{\sf A}(t_{\sf A}(b))\bullet \varphi_{\sf A}(id_{\sf H}(t_{\sf A}(b)), b)\otimes \phi_{\sf H}(id_{\sf H}(t_{\sf A}(b)), b)\star g	 \;\;\; {\rm if} \;\; (h, b)\in {\sf H}_1\;_{s_{\sf H}}\hspace{-0.15cm}\times_{t_{\sf A}} {\sf A}_1,\\ 
\\
0 \;\; {\rm otherwise.}
\end{array}\right.\\
\;	& = \left\{ \begin{array}{l}
id_{\sf A}(t_{\sf A}(b))\bullet b\otimes id_{\sf A}(s_{\sf A}(b))\star g	 \;\;\; {\rm if} \;\; (h, b)\in {\sf H}_1\;_{s_{\sf H}}\hspace{-0.15cm}\times_{t_{\sf A}} {\sf A}_1,\\ 
\\
0 \;\; {\rm otherwise.}
\end{array}\right.\\
\;	& = \left\{ \begin{array}{l}
b\otimes g \;\;\;{\rm if} \;\; (h, b)\in {\sf H}_1\;_{s_{\sf H}}\hspace{-0.15cm}\times_{t_{\sf A}} {\sf A}_1,\\ 
\\
0 \;\; {\rm otherwise.}
\end{array}\right.
\end{align*}

Thus, (${\rm d}4-4$)  of Definition \ref{whq} holds. 

Finally, 
$$(\mu_D\circ (id_{D}\ot \mu_D)\circ (id_{D}\ot \lambda_D\ot
id_{D})\circ (\delta_D\ot id_{D}))(a\otimes h\otimes b\otimes g)$$
$$= \left\{ \begin{array}{l}
\left\{ \begin{array}{l} 	
a\bullet \varphi_{\sf A}(h, \varphi_{\sf A}(\lambda_{\sf H}(h), \lambda_{\sf A}(a))\bullet \varphi_{\sf A}(\phi_{\sf H}(\lambda_{\sf H}(h), \lambda_{\sf A}(a)), b))  \\
\otimes \phi_{\sf H}(h, \varphi_{\sf A}(\lambda_{\sf H}(h), \lambda_{\sf A}(a))\bullet \varphi_{\sf A}(\phi_{\sf H}(\lambda_{\sf H}(h), \lambda_{\sf A}(a)), b))\star (\phi_{\sf H}(\phi_{\sf H}(\lambda_{\sf H}(h), \lambda_{\sf A}(a)), b)\star g )\\
 {\rm if} \;\; (a, b)\in {\sf A}_1\;_{t_{\sf A}}\hspace{-0.15cm}\times_{t_{\sf A}} {\sf A}_1,\\ \end{array}\right.
\\
\\
0 \;\; {\rm otherwise,}
\end{array}\right.
$$
where 
\begin{itemize}
\item[ ]$\hspace{0.38cm}a\bullet \varphi_{\sf A}(h, \varphi_{\sf A}(\lambda_{\sf H}(h), \lambda_{\sf A}(a))\bullet \varphi_{\sf A}(\phi_{\sf H}(\lambda_{\sf H}(h), \lambda_{\sf A}(a)), b))  $ 
\item [ ]$=a\bullet \varphi_{\sf A}(h, \varphi_{\sf A}(\lambda_{\sf H}(h), \lambda_{\sf A}(a)\bullet b)) $ {\scriptsize (by  (e2) of Definition \ref{mp})}
\item[ ]$= a\bullet \varphi_{\sf A}(h\star \lambda_{\sf H}(h), \lambda_{\sf A}(a)\bullet b)  $ {\scriptsize (by (d2) of Definition \ref{action})}
\item[ ]$= a\bullet \varphi_{\sf A}(id_{\sf H}(t_{\sf H}(h)), \lambda_{\sf A}(a)\bullet b)  $  {\scriptsize (by (\ref{E-4}))}
\item[ ]$= a\bullet \varphi_{\sf A}(id_{\sf H}(s_{\sf A}(a)), \lambda_{\sf A}(a)\bullet b)  $ {\scriptsize  (by $t_{\sf H}(h)=s_{\sf A}(a)$)}
\item[ ]$=a\bullet \varphi_{\sf A}(id_{\sf H}(t_{\sf A}(\lambda_{\sf A}(a)\bullet b)), \lambda_{\sf A}(a)\bullet b)$  {\scriptsize (by (\ref{E-4}) and $({\rm a}2-2$) of Definition \ref{quasigroupoid})}
\item[ ]$= a\bullet  (\lambda_{\sf A}(a)\bullet b) $  {\scriptsize (by (c3) of Definition \ref{action})}
\item[ ]$= \lambda_{\sf A}(\lambda_{\sf A}(a))\bullet  (\lambda_{\sf A}(a)\bullet b) $ {\scriptsize  (by (\ref{E-4}))}
\item[ ]$=b$ {\scriptsize (by $({\rm a}2-3$) of Definition \ref{quasigroupoid})}
\end{itemize}
and 
\begin{itemize}
	\item[ ]$\hspace{0.38cm} \phi_{\sf H}(h, \varphi_{\sf A}(\lambda_{\sf H}(h), \lambda_{\sf A}(a))\bullet \varphi_{\sf A}(\phi_{\sf H}(\lambda_{\sf H}(h), \lambda_{\sf A}(a)), b))\star (\phi_{\sf H}(\phi_{\sf H}(\lambda_{\sf H}(h), \lambda_{\sf A}(a)), b)\star g)  $ 
	\item[ ]$= \phi_{\sf H}(h, \varphi_{\sf A}(\lambda_{\sf H}(h), \lambda_{\sf A}(a)\bullet b))\star (\phi_{\sf H}(\lambda_{\sf H}(h), \lambda_{\sf A}(a)\bullet b)\star g) $ {\scriptsize (by (e2) of Definition \ref{mp} and (d2) of}
	\item[ ]$\hspace{0.38cm}$ {\scriptsize Definition \ref{action})}
	\item[ ]$= \phi_{\sf H}(\lambda_{\sf H}(\lambda_{\sf H}(h)), \varphi_{\sf A}(\lambda_{\sf H}(h), \lambda_{\sf A}(a)\bullet b))\star (\phi_{\sf H}(\lambda_{\sf H}(h), \lambda_{\sf A}(a)\bullet b)\star g)  $  {\scriptsize (by (\ref{E-5})}
	\item[ ]$= \lambda_{\sf H}(\phi_{\sf H}(\lambda_{\sf H}(h), \lambda_{\sf A}(a)\bullet b))\star (\phi_{\sf H}(\lambda_{\sf H}(h), \lambda_{\sf A}(a)\bullet b)\star g)$  {\scriptsize (by (\ref{P-4})}
	\item[ ]$=g $  {\scriptsize (by $({\rm a}2-3$) of Definition \ref{quasigroupoid})}.
\end{itemize}

Thus 
$$(\mu_D\circ (id_{D}\ot \mu_D)\circ (id_{D}\ot \lambda_D\ot
id_{D})\circ (\delta_D\ot id_{D}))(a\otimes h\otimes b\otimes g)$$
$$= \left\{ \begin{array}{l} 	
b\otimes g \;\;{\rm if} \;\; (a, b)\in {\sf A}_1\;_{t_{\sf A}}\hspace{-0.15cm}\times_{t_{\sf A}} {\sf A}_1,\\
\\
0 \;\; {\rm otherwise}
\end{array}\right.
$$
and taking into account that
\begin{align*}
\;	& \;\;\;\;\;\;(\mu_D\circ ( \Pi_{D}^{L}\otimes id_{D}))(a\otimes h\otimes b\otimes g)\\
\;	& = \left\{ \begin{array}{l}
id_{\sf A}(t_{\sf A}(a))\bullet \varphi_{\sf A}(id_{\sf H}(t_{\sf A}(a)), b)\otimes \phi_{\sf H}(id_{\sf H}(t_{\sf A}(a)), b)\star g	 \;\;\; {\rm if} \;\; (a, b)\in {\sf A}_1\;_{t_{\sf A}}\hspace{-0.15cm}\times_{t_{\sf A}} {\sf A}_1,\\
\\	
0 \;\; {\rm otherwise.}
\end{array}\right.\\
\;	& = \left\{ \begin{array}{l}
id_{\sf A}(t_{\sf A}(b))\bullet \varphi_{\sf A}(id_{\sf H}(t_{\sf A}(b)), b)\otimes \phi_{\sf H}(id_{\sf H}(t_{\sf A}(b)), b)\star g	 \;\;\; {\rm if} \;\; (a, b)\in {\sf A}_1\;_{t_{\sf A}}\hspace{-0.15cm}\times_{t_{\sf A}} {\sf A}_1,\\ 
\\
0 \;\; {\rm otherwise.}
\end{array}\right.\\
\;	& = \left\{ \begin{array}{l}
id_{\sf A}(t_{\sf A}(b))\bullet b\otimes id_{\sf H}(s_{\sf A}(b))\star g	 \;\;\; {\rm if} \;\; (h, b)\in {\sf A}_1\;_{t_{\sf A}}\hspace{-0.15cm}\times_{t_{\sf A}} {\sf A}_1,\\ 
\\
0 \;\; {\rm otherwise.}
\end{array}\right.\\
\;	& = \left\{ \begin{array}{l}
b\otimes g  \;\;\; {\rm if} \;\; (h, b)\in {\sf A}_1\;_{t_{\sf A}}\hspace{-0.15cm}\times_{t_{\sf A}} {\sf A}_1,\\ 
\\
0 \;\; {\rm otherwise,}
\end{array}\right.
\end{align*}
we can ensure that (${\rm d}4-5$) of Definition \ref{whq} holds
\end{proof}

\begin{theorem}
Let $({\sf A}, {\sf H})$ be a matched pair of finite quasigroupoids. The cocommutative  weak Hopf quasigroups ${\mathbb K}[{\sf A}\bowtie {\sf H}]$ and ${\mathbb K}[{\sf A}]\bowtie {\mathbb K}[{\sf H}]$ are isomorphic in {\sf WHQ}.
\end{theorem}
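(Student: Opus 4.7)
The plan is to define the linear map $\omega:{\mathbb K}[{\sf A}\bowtie{\sf H}]\to {\mathbb K}[{\sf A}]\bowtie{\mathbb K}[{\sf H}]$ on basis elements by $\omega(a,h)=a\otimes h$ for each $(a,h)\in ({\sf A}\bowtie{\sf H})_1={\sf A}_1\;_{s_{\sf A}}\hspace{-0.15cm}\times_{t_{\sf H}}{\sf H}_1$, and to verify it is an isomorphism in {\sf WHQ}. Bijectivity is immediate: ${\mathbb K}[{\sf A}\bowtie{\sf H}]$ is the free vector space on the set ${\sf A}_1\;_{s_{\sf A}}\hspace{-0.15cm}\times_{t_{\sf H}}{\sf H}_1$, while by construction ${\mathbb K}[{\sf A}]\bowtie{\mathbb K}[{\sf H}]=\mathrm{Im}(\nabla_\Phi)$ admits the basis $\{a\otimes h : (a,h)\in {\sf A}_1\;_{s_{\sf A}}\hspace{-0.15cm}\times_{t_{\sf H}}{\sf H}_1\}$. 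Moreover, $\omega$ is trivially a coalgebra morphism: by Example \ref{ex-k} and Theorem \ref{th2mp}, on both sides the coproduct sends each basis element to its two-fold tensor square and the counit evaluates it to $1_{\mathbb K}$.

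The multiplicative condition (\ref{mkl4}) is where the main content lies. Both weak Hopf quasigroups are cocommutative and each basis element is grouplike, so $\nabla_{{\mathbb K}[{\sf A}\bowtie{\sf H}]}((a,g)\otimes(b,h))=(a,g)\otimes\Pi^R_{{\mathbb K}[{\sf A}\bowtie{\sf H}]}((a,g))(b,h)$; using $\Pi^R_{{\mathbb K}[{\sf A}\bowtie{\sf H}]}(a,g)=id_{{\sf A}\bowtie{\sf H}}(s_{\sf H}(g))$ from Example \ref{ex-k}, this evaluates to $(a,g)\otimes(b,h)$ when $s_{\sf H}(g)=t_{\sf A}(b)$ and to zero otherwise. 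Applying $\mu_{{\mathbb K}[{\sf A}]\bowtie{\mathbb K}[{\sf H}]}\circ(\omega\otimes\omega)$ then yields $a\bullet\varphi_{\sf A}(g,b)\otimes \phi_{\sf H}(g,b)\star h$ in the composable case and zero otherwise, which is precisely $\omega$ applied to the product $(a,g)._\Psi(b,h)$ defined in Theorem \ref{match-1}, matching $\omega\circ\mu_{{\mathbb K}[{\sf A}\bowtie{\sf H}]}$.

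Finally, for the source- and target-map conditions (\ref{mkl1})--(\ref{mkl3}), cocommutativity of both weak Hopf quasigroups forces $\overline{\Pi}^L=\Pi^L$ on each side. The explicit formulas $\Pi^L_{{\mathbb K}[{\sf A}\bowtie{\sf H}]}(a,h)=(id_{\sf A}(t_{\sf A}(a)),id_{\sf H}(t_{\sf A}(a)))$ and $\Pi^R_{{\mathbb K}[{\sf A}\bowtie{\sf H}]}(a,h)=(id_{\sf A}(s_{\sf H}(h)),id_{\sf H}(s_{\sf H}(h)))$ provided by Example \ref{ex-k} agree, under $\omega$, with the formulas $\Pi^L_{{\mathbb K}[{\sf A}]\bowtie{\mathbb K}[{\sf H}]}(a\otimes h)=id_{\sf A}(t_{\sf A}(a))\otimes id_{\sf H}(t_{\sf A}(a))$ and $\Pi^R_{{\mathbb K}[{\sf A}]\bowtie{\mathbb K}[{\sf H}]}(a\otimes h)=id_{\sf A}(s_{\sf H}(h))\otimes id_{\sf H}(s_{\sf H}(h))$ already derived inside the proof of Theorem \ref{th2mp}. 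This instantly gives (\ref{mkl1}), (\ref{mkl2}), and, by composition, (\ref{mkl3}). The only real obstacle is notational bookkeeping across the two presentations; no new matched-pair identities beyond those already exploited in Theorems \ref{match-1} and \ref{th2mp} are required, and since $\omega$ is bijective, its set-theoretic inverse inherits all morphism properties, so $\omega$ is an isomorphism in {\sf WHQ}.
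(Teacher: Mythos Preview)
Your proposal is correct and follows essentially the same approach as the paper: both define the obvious linear map $(a,h)\mapsto a\otimes h$ on basis elements, argue bijectivity from the shared basis ${\sf A}_1\;_{s_{\sf A}}\hspace{-0.15cm}\times_{t_{\sf H}}{\sf H}_1$, and then verify the {\sf WHQ}-morphism axioms (\ref{mkl1})--(\ref{mkl4}) by invoking cocommutativity and the explicit formulas for $\Pi^L$, $\Pi^R$ established in Example~\ref{ex-k} and the proof of Theorem~\ref{th2mp}. The only cosmetic difference is the order in which the conditions are checked and your explicit remark that the inverse inherits the morphism properties.
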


\begin{proof} Let $({\sf A}, {\sf H})$ be a matched pair of finite quasigroupoids. First note that the weak Hopf quasigroups ${\mathbb K}[{\sf A}\bowtie {\sf H}]$ and ${\mathbb K}[{\sf A}]\bowtie {\mathbb K}[{\sf H}]$ have the same dimension as ${\mathbb K}$-vector spaces. Also, they are cocommutative and then $\Pi_{B}^{L}=\overline{\Pi}_{B}^{L}$ and $\Pi_{B}^{R}=\overline{\Pi}_{B}^{R}$, for $B$ equal to ${\mathbb K}[{\sf A}\bowtie {\sf H}]$ or $B$ equal to ${\mathbb K}[{\sf A}]\bowtie {\mathbb K}[{\sf H}]$. 
	
Define the ${\mathbb K}$-linear map $f:{\mathbb K}[{\sf A}\bowtie {\sf H}]\rightarrow {\mathbb K}[{\sf A}]\bowtie {\mathbb K}[{\sf H}]$ as the one such that 
$$f(a,g)=a\ot g$$
for all $(a,g)\in {\sf A}_1\;_{s_{\sf A}}\hspace{-0.15cm}\times_{t_{\sf H}} {\sf H}_1$. Then, $f$ is surjective and then it is bijective. Moreover, $f$ is a morphism {\sf WHQ}. Indeed, to prove this assertion we must show that conditions (\ref{mkl1}), (\ref{mkl2}), (\ref{mkl3}) and (\ref{mkl4}) of Definition \ref{morwhq} hold. Note that, by the cocommutative condition,  we only need to prove (\ref{mkl1}), (\ref{mkl2}) for $\Pi^{L}_{X}$ with $X\in\{{\mathbb K}[{\sf A}\bowtie {\sf H}],  {\mathbb K}[{\sf A}]\bowtie {\mathbb K}[{\sf H}]\}$ and (\ref{mkl4}). Under these conditions,  (\ref{mkl3}) follows from (\ref{mkl1}) and (\ref{mkl2}).

Let  $(a,g)$ be in $ {\sf A}_1\;_{s_{\sf A}}\hspace{-0.15cm}\times_{t_{\sf H}} {\sf H}_1$. Then, 
$$
(\Pi^{R}_{{\mathbb K}[{\sf A}]\bowtie {\mathbb K}[{\sf H}]}\circ f)(a,g)=\Pi^{R}_{{\mathbb K}[{\sf A}]\bowtie {\mathbb K}[{\sf H}]} (a\otimes g)=id_{\sf A}(s_{\sf H}(g))\otimes id_{\sf H}(s_{\sf H}(g))=f(id_{\sf A}(s_{\sf H}(g)), id_{\sf H}(s_{\sf H}(g)))$$
$$=f(id_{{\sf A}\bowtie {\sf H}}(s_{\sf H}(g)))=f(id_{{\sf A}\bowtie {\sf H}}(s_{{\sf A}\bowtie {\sf H}}(a,g)))=
(f\circ \Pi^{R}_{{\mathbb K}[{\sf A}\bowtie {\sf H}]})(a,g)
$$
and 
$$
(\Pi^{L}_{{\mathbb K}[{\sf A}]\bowtie {\mathbb K}[{\sf H}]}\circ f)(a,g)=\Pi^{L}_{{\mathbb K}[{\sf A}]\bowtie {\mathbb K}[{\sf H}]} (a\otimes g)=id_{\sf A}(t_{\sf A}(a))\otimes id_{\sf H}(t_{\sf A}(a))=f(id_{\sf A}(t_{\sf A}(a)), id_{\sf H}(t_{\sf A}(a)))$$
$$=f(id_{{\sf A}\bowtie {\sf H}}(t_{\sf A}(a)))=f(id_{{\sf A}\bowtie {\sf H}}(t_{{\sf A}\bowtie {\sf H}}(a,g)))=
(f\circ \Pi^{L}_{{\mathbb K}[{\sf A}\bowtie {\sf H}]})(a,g).
$$

Therefore (\ref{mkl1}) and (\ref{mkl2}) hold. Finally, the morphism $\nabla_{{\mathbb K}[{\sf A}\bowtie {\sf H}]}$ is defined by
$$\nabla_{{\mathbb K}[{\sf A}\bowtie {\sf H}]}((a, g)\otimes (b,h))= \left\{ \begin{array}{l}
(a, g)\otimes (b,h)\;\;\; {\rm if} \;\; (g, b)\in {\sf H}_1\;_{s_{\sf H}}\hspace{-0.15cm}\times_{t_{\sf A}} {\sf A}_1,\\ 
\\
0 \;\; {\rm otherwise,}
\end{array}\right.
$$
and then (\ref{mkl4}) holds because 
\begin{align*}
\;	& \;\;\;\;\;(\mu_{{\mathbb K}[{\sf A}]\bowtie {\mathbb K}[{\sf H}]}\circ (f\otimes f)\circ \nabla_{{\mathbb K}[{\sf A}\bowtie {\sf H}]})((a, g)\otimes (b,h))\\
\;	& = \left\{ \begin{array}{l}
a\bullet \varphi_{\sf A}(g, b)\otimes \phi_{\sf H}(b,h)\star h\;\;\; {\rm if} \;\; (g, b)\in {\sf H}_1\;_{s_{\sf H}}\hspace{-0.15cm}\times_{t_{\sf A}} {\sf A}_1,\\ 
\\
0 \;\; {\rm otherwise}
\end{array}\right.
\\
\;	& = \left\{ \begin{array}{l}
f((a, g)._{\Psi} (b,h))\;\;\; {\rm if} \;\; (g, b)\in {\sf H}_1\;_{s_{\sf H}}\hspace{-0.15cm}\times_{t_{\sf A}} {\sf A}_1,\\ 
\\
0 \;\; {\rm otherwise}
\end{array}\right.
\\
\;	& =(f\circ \mu_{{\mathbb K}[{\sf A}\bowtie {\sf H}]})((a, g)\otimes (b,h)).
\end{align*}
\end{proof}

Taking into account that quasigroups are examples of quasigroupoids, we have that following corollary.
\begin{corollary}
Let $(A, H)$ be a matched pair of  quasigroups. The cocommutative   Hopf quasigroups ${\mathbb K}[{ A}\bowtie {H}]$ and ${\mathbb K}[{ A}]\bowtie {\mathbb K}[{ H}]$ are isomorphic in the category of Hopf quasigroups.
\end{corollary}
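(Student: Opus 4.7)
The plan is to derive the corollary as a direct specialization of the preceding theorem, by viewing a quasigroup as a finite quasigroupoid whose base ${\sf A}_0$ is a singleton. First I would observe that if $A=(A,\cdot,e_A)$ is a quasigroup, the pair $(\{e_A\},A)$ fits Definition \ref{quasigroupoid} with source, target and identity maps forced to be constant, and the fibered product ${\sf A}_1\;_{s_{\sf A}}\hspace{-0.1cm}\times_{t_{\sf A}}{\sf A}_1$ collapses to $A\times A$, so the quasigroupoid product coincides with $\cdot$ and $\lambda_{\sf A}$ coincides with $u\mapsto u^{-1}$. Symmetrically for $H$.

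Next I would check that a matched pair of quasigroups $(A,H)$, in the classical sense (left action of $H$ on $A$ and right action of $A$ on $H$ satisfying the usual Takeuchi-style compatibilities), is exactly the data of a matched pair of quasigroupoids in the sense of Definition \ref{mp} when $A$ and $H$ are regarded as quasigroupoids over the common singleton base: conditions (c1), (c3), (d1), (d3), (e1) of Definitions \ref{action} and \ref{mp} are trivially satisfied because source and target land in the unique base point, while (c2), (d2), (e2) and (e3) reduce precisely to the usual matched pair axioms for quasigroups recalled in the introduction. Hence the diagonal quasigroupoid $A\bowtie H$ built via Theorem \ref{match-1} agrees, as a quasigroup, with the standard double cross product of $A$ and $H$.

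Now I would apply the previous theorem to obtain an isomorphism $f:{\mathbb K}[A\bowtie H]\to {\mathbb K}[A]\bowtie {\mathbb K}[H]$ in {\sf WHQ}. To promote this to the category of Hopf quasigroups it remains to verify that both weak Hopf quasigroups involved are in fact Hopf quasigroups, i.e.\ that their counits and coproducts are morphisms of unital magmas and equivalently that $\Pi^{L}=\Pi^{R}=\eta\circ\varepsilon$. For ${\mathbb K}[A\bowtie H]$ this follows from Example \ref{ex-k}, since with a one-point base one has $\Pi^{L}_{{\mathbb K}[A\bowtie H]}(b)=id_{A\bowtie H}(t_{A\bowtie H}(b))=1$ and analogously for $\Pi^{R}$. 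For ${\mathbb K}[A]\bowtie {\mathbb K}[H]$ the explicit formulas derived inside the proof of Theorem \ref{th2mp} give $\Pi^{L}_{D}(a\otimes h)=id_{\sf A}(t_{\sf A}(a))\otimes id_{\sf H}(t_{\sf A}(a))$ and $\Pi^{R}_{D}(a\otimes h)=id_{\sf A}(s_{\sf H}(h))\otimes id_{\sf H}(s_{\sf H}(h))$, which in the singleton base case both equal $1_{{\mathbb K}[A]\bowtie {\mathbb K}[H]}=e_A\otimes e_H$; the idempotent $\nabla_{\Phi}$ becomes the identity, so $\mu_{{\mathbb K}[A]\bowtie {\mathbb K}[H]}$ is defined on the full tensor product.

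The only mildly delicate point, and the step I would write out with the most care, is the compatibility between the axiom systems in Section 1 and Definitions \ref{action}, \ref{mp}: one must check that the cocycle-type identities listed by Takeuchi for groups (and their quasigroup analogue from \cite{KM2}) correspond termwise to (e2) and (e3) once the trivial source/target data are substituted. Everything else is then a straightforward transport via the previous theorem, and the resulting isomorphism $f$, being already a morphism of weak Hopf quasigroups between two objects that happen to be Hopf quasigroups, is automatically a morphism of Hopf quasigroups.
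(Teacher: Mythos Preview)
Your proposal is correct and follows essentially the same approach as the paper: the paper offers no argument beyond the one-line remark that quasigroups are quasigroupoids with singleton base, so the corollary is an immediate specialization of the preceding theorem. You simply make explicit the points the paper leaves tacit---that the matched-pair axioms collapse to the quasigroup ones, that both objects become genuine Hopf quasigroups because $\Pi^{L}=\Pi^{R}=\eta\circ\varepsilon$ over a one-point base, and that a {\sf WHQ}-morphism between Hopf quasigroups is automatically a Hopf quasigroup morphism.
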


\section{Funding} The  author was supported by  Ministerio de Ciencia e Innovaci\'on of Spain. Agencia Estatal de Investigaci\'on. Uni\'on Europea - Fondo Europeo de Desarrollo Regional (FEDER). Grant PID2020-115155GB-I00: Homolog\'{\i}a, homotop\'{\i}a e invariantes categ\'oricos en grupos y \'algebras no asociativas.

\end{document}